\theoremstyle{plain}
\newtheorem{lemma}{Lemma}
\newtheorem{remark}{Remark}
\numberwithin{equation}{section}
\long\def\salta#1{\relax}
\newcommand{\dint}{\dyle\int}
\newcommand{\re}{{I\!\!R}}
\newcommand{\ren}{\re^N}
\newcommand{\dyle}{\displaystyle}
\newcommand{\x}{\times}
\newcommand{\irn}{\int_{\re^N}}
\newcommand{\io}{\int\limits_\O}
\newcommand{\limit}{\lim\limits}
\renewcommand{\a }{\alpha }
\renewcommand{\b }{\beta }
\renewcommand{\d }{\delta }
\newcommand{\D }{\Delta }
\newcommand{\e }{\varepsilon }
\newcommand{\g }{\gamma}
\renewcommand{\l }{\lambda }
\renewcommand{\L }{\Lambda }
\newcommand{\s }{\sigma }
\renewcommand{\O }{\Omega }
\newtheorem{Theorem}{Theorem}[section]
\newtheorem{Definition}[Theorem]{Definition}
\newtheorem{Lemma}[Theorem]{Lemma}
\newtheorem{Proposition}[Theorem]{Proposition}
\newcommand{\cqd}{{\unskip\nobreak\hfil\penalty50
        \hskip2em\hbox{}\nobreak\hfil\mbox{\rule{1ex}{1ex} \qquad}
        \parfillskip=0pt \finalhyphendemerits=0\par\medskip}}
\begin{document}
\title[On  Fractional quasilinear
parabolic problem]{On  Fractional quasilinear parabolic problem with Hardy potential }
\author[B. Abdellaoui, A. Attar, R. Bentifour \& I. Peral]{B. Abdellaoui$^*$, A. Attar$^*$, R. Bentifour$^*$ \& I.peral$^\dag$}

\address{\hbox{\parbox{5.7in}{\medskip\noindent {$*$Laboratoire d'Analyse Nonlin\'eaire et Math\'ematiques
Appliqu\'ees. \hfill \break\indent D\'epartement de
Math\'ematiques, Universit\'e Abou Bakr Belka\"{\i}d, Tlemcen,
\hfill\break\indent Tlemcen 13000, Algeria.\\[3pt]
$\dag$Departamento de Matem{\'a}ticas, U. Autonoma
de Madrid, \hfill\break\indent 28049 Madrid, Spain.\\[3pt]
        \em{E-mail addresses: }{\tt boumediene.abdellaoui@inv.uam.es, \tt ahm.attar@yahoo.fr, \tt rachidbentifour@gmail.com, \tt ireneo.peral@uam.es}.}}}}
\thanks{ This work is partially supported by projects
MTM2013-40846-P and MTM2016-80474-P, MINECO, Spain. } \keywords{Nonlinear nonlocal parabolic problems, Hardy potential, Caffarelli-Khon-Nirenberg inequality for degenerate weights, finite time extension, non existence result. \\
\indent 2010 {\it Mathematics Subject Classification:  35K59, 35K65, 35K67, 35K92, 35B09.} }


\maketitle

\begin{abstract}
The aim goal of this paper is to treat the following problem
\begin{equation*}
\left\{
\begin{array}{rcll}
u_t+(-\D^s_{p}) u &=&\dyle \l \dfrac{u^{p-1}}{|x|^{ps}}  &
\text{ in } \O_{T}=\Omega \times (0,T), \\
u&\ge & 0 &
\text{ in }\ren \times (0,T), \\
u &=& 0 & \text{ in }(\ren\setminus\O) \times (0,T), \\
u(x,0)&=& u_0(x)& \mbox{  in  }\O,
\end{array}%
\right.
\end{equation*}
where $\Omega$ is a bounded domain containing the origin,
$$ (-\D^s_{p})\, u(x,t):=P.V\int_{\ren} \,\dfrac{|u(x,t)-u(y,t)|^{p-2}(u(x,t)-u(y,t))}{|x-y|^{N+ps}} \,dy$$
with $1<p<N, s\in (0,1)$ and $f, u_0$ are non negative functions.
The main goal of this work is to discuss the existence of solution according to the values of $p$ and $\l$.
\end{abstract}
\section{Introduction.}\label{sec:s0}
This paper deals with the following parabolic problem
\begin{equation}\label{eq:def}
\left\{
\begin{array}{rcll}
u_t+(-\D^s_{p}) u&=& \dyle \l \frac{u^{p-1}}{|x|^{ps}}  &
\text{ in } \O_{T}=\Omega \times (0,T)  , \\
u&\ge& 0 &
\text{ in }\Omega, \\
u&=&0 & \text{ in }(\ren\setminus\O) \times (0,T), \\
u(x,0)&=&u_0(x)& \mbox{  in  }\O,
\end{array}%
\right.
\end{equation}
where $\O$ is a bounded domain and
$$ (-\D^s_{p})\, u(x,t):=P.V\int_{\mathbb{R}^{N}} \,
\frac{|u(x,t)-u(y,t)|^{p-2}(u(x,t)-u(y,t))}{|x-y|^{N+ps}}\ dy,$$
is the non local $p$ laplacian operator. Problem \eqref{eq:def} is related to the following Hardy-Sobolev inequality .
\begin{Theorem} \label{S-Hardy}(Fractional Hardy-Sobolev  inequality)
Let $N>1$ and $0<s<1$. Assume that $1\le p<\frac{N}{s}$, then for all $u \in W^{s,p}(\ren)$
we have
\begin{equation}\label{hardy}
\frac 12 \iint_{\mathbb{R}^{2N}}
\dfrac{|u(x)-u(y)|^p}{|x-y|^{N+ps}}dxdy\ge \L_{N,p,s}\irn
\dfrac{|u(x)|^p}{|x|^{ps}}dx
\end{equation}
where the constant $\L_{N,p,s}$ is given by
\begin{equation}\label{LL}
\L_{N,p,s}=\int_0^1
\s^{ps-1}|1-\sigma^{\frac{N-ps}{p}}|^{p}K(\sigma)d\s
\end{equation}
and
\begin{equation}\label{kkk}
K(\sigma)=\dint\limits_{|y'|=1}\dfrac{dH^{n-1}(y')}{|x'-\s
y'|^{N+ps}}=2\frac{\pi^{\frac{N-1}{2}}}{\beta(\frac{N-1}{2})}\int_0^\pi
\frac{\sin^{N-2}(\xi)}{(1-2\sigma \cos
(\xi)+\sigma^2)^{\frac{N+ps}{2}}}d\xi.
\end{equation}
The constant $\L_{N,p,s}$ is optimal and not achieved.
\end{Theorem}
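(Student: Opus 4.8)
The strategy is the one of the nonlinear ground state representation: identify the (non $W^{s,p}$) \emph{ground state} $w(x)=|x|^{-\frac{N-ps}{p}}$ that makes \eqref{hardy} formally an equality, convert this into a rigorous inequality through a discrete Picone inequality, and read off the optimality and the non-attainment of $\L_{N,p,s}$ from truncations of $w$. By density of $C_c^\infty(\ren)$ in $W^{s,p}(\ren)$ and the inequality $\big||u(x)|-|u(y)|\big|\le|u(x)-u(y)|$, it suffices to treat $u\in C_c^\infty(\ren)$ with $u\ge0$. Write $\gamma=\frac{N-ps}{p}>0$, $w(x)=|x|^{-\gamma}$, $J_p(t)=|t|^{p-2}t$. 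The crucial preliminary fact is that
$$(-\Delta^{s}_{p})\,w(x)=\L_{N,p,s}\,\frac{w(x)^{p-1}}{|x|^{ps}}=\L_{N,p,s}\,|x|^{-\gamma(p-1)-ps}\qquad\text{for }x\neq0,$$
with $\L_{N,p,s}$ \emph{exactly} the constant of \eqref{LL}--\eqref{kkk}.

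To prove this, note that $(-\Delta^{s}_{p})$ is $(p-1)$-homogeneous in $u$ and of order $ps$ under dilations, so $(-\Delta^{s}_{p})w$ is a constant times $|x|^{-\gamma(p-1)-ps}$ and it is enough to evaluate it at $x=e_1$. Writing $y=\sigma y'$ with $\sigma>0$, $|y'|=1$, the angular integration produces precisely the kernel $K(\sigma)$ of \eqref{kkk}, so that $(-\Delta^{s}_{p})w(e_1)=\text{P.V.}\int_0^\infty J_p(1-\sigma^{-\gamma})\,\sigma^{N-1}K(\sigma)\,d\sigma$. Splitting this at $\sigma=1$, applying $\sigma\mapsto\sigma^{-1}$ to the outer part, and using the scaling identity $K(\sigma^{-1})=\sigma^{N+ps}K(\sigma)$, one is left with $\int_0^1\big(\sigma^{ps-1}-\sigma^{N-1-\gamma(p-1)}\big)(1-\sigma^\gamma)^{p-1}K(\sigma)\,d\sigma$; the choice $\gamma=\frac{N-ps}{p}$ forces $N-1-\gamma(p-1)=ps-1+\gamma$, so the two monomials combine and the integral collapses to $\int_0^1\sigma^{ps-1}(1-\sigma^\gamma)^pK(\sigma)\,d\sigma=\L_{N,p,s}$. (Convergence at $\sigma=0,1,\infty$ uses $ps>0$, $p(1-s)>0$ and $K(\sigma)\sim c\,|1-\sigma|^{-1-ps}$ as $\sigma\to1$.)

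The inequality itself rests on the elementary pointwise bound
$$|a\tau_1-b\tau_2|^p\ \ge\ \big(a\,\tau_1^p-b\,\tau_2^p\big)\,J_p(a-b)\qquad\text{for all }a,b>0,\ \tau_1,\tau_2\ge0,$$
with equality iff $\tau_1=\tau_2$; this reduces by homogeneity (say $b=1$) to a one-variable calculus statement and is a discrete fractional Picone inequality. Now for $u\in C_c^\infty(\ren)$, $u\ge0$, put $v=u/w=u\,|x|^{\gamma}$ (bounded, continuous, compactly supported); then $u^p/w^{p-1}=w\,v^p$ is an admissible test function. Testing the ground state equation against $w\,v^p$ (legitimate after truncating $w$ near the origin and passing to the limit) and using the symmetry of the kernel,
$$\L_{N,p,s}\irn\frac{u^p}{|x|^{ps}}\,dx=\frac12\iint_{\mathbb{R}^{2N}}\frac{J_p(w(x)-w(y))}{|x-y|^{N+ps}}\,\big(w(x)v(x)^p-w(y)v(y)^p\big)\,dx\,dy,$$
and applying the Picone inequality pointwise with $a=w(x)$, $b=w(y)$, $\tau_1=v(x)$, $\tau_2=v(y)$ bounds the right-hand side by $\tfrac12\iint|u(x)-u(y)|^p|x-y|^{-N-ps}\,dx\,dy$, which is \eqref{hardy}.

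For optimality, let $\phi_\varepsilon$ be $w$ cut off smoothly to the annulus $\{\varepsilon\le|x|\le\varepsilon^{-1}\}$ by a fixed dyadic profile. A computation in self-similar coordinates gives, as $\varepsilon\to0$,
$$\irn\frac{\phi_\varepsilon^p}{|x|^{ps}}\,dx=2\,|\mathbb{S}^{N-1}|\,|\log\varepsilon|+O(1),\qquad\frac12\iint\frac{|\phi_\varepsilon(x)-\phi_\varepsilon(y)|^p}{|x-y|^{N+ps}}\,dx\,dy=2\,\L_{N,p,s}\,|\mathbb{S}^{N-1}|\,|\log\varepsilon|+O(1),$$
where $2\L_{N,p,s}=\int_0^\infty\rho^{N-1}|1-\rho^{-\gamma}|^pK(\rho)\,d\rho$ is the energy per logarithmic scale and the cut-off contributes only $O(1)$; hence the Rayleigh quotient of $\phi_\varepsilon$ tends to $\L_{N,p,s}$, so the constant is sharp, and the same computation shows $w\notin W^{s,p}(\ren)$. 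If \eqref{hardy} held with equality for some $u\in W^{s,p}(\ren)\setminus\{0\}$, the equality case of Picone would force $v=u/w$ constant, i.e.\ $u=c\,w\notin W^{s,p}$, a contradiction. \textbf{The main obstacles} are, first, making the principal-value and Fubini manipulations in the displayed identity rigorous when $p(1-s)\le1$, since the kernel is then not absolutely integrable against the first difference (one truncates to $|x-y|>\delta$ and lets $\delta\to0$); and second, the careful bookkeeping of the $O(1)$ cut-off errors that isolates the \emph{precise} coefficient $2\L_{N,p,s}|\mathbb{S}^{N-1}|$ --- it is this last step, not the soft parts of the argument, that actually pins down the exact value of $\L_{N,p,s}$ together with its non-attainment.
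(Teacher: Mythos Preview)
The paper does not give its own proof of this statement; it simply cites \cite{FS} (Frank--Seiringer) together with \cite{AB} and \cite{AM}. Your proposal is exactly the nonlinear ground state representation of \cite{FS}: compute $(-\Delta^s_p)|x|^{-\gamma}$ for $\gamma=\frac{N-ps}{p}$ via the radial reduction (the same change of variables $y=\sigma y'$ and the identity $K(\sigma^{-1})=\sigma^{N+ps}K(\sigma)$ that the paper uses repeatedly), then feed this into the discrete Picone inequality (recorded in the paper as Theorem~\ref{pic}), and finally test optimality and non-attainment on truncations of $|x|^{-\gamma}$. So your approach coincides with the one the paper defers to, and the sketch is correct in outline; the two technical caveats you flag (justifying the principal-value/Fubini step when testing against $w v^p$, and the $O(1)$ bookkeeping for the cut-off) are precisely the points where the cited references do the real work.
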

We refer to \cite{FS} and the references therein for the  proof. See too \cite{AB} and \cite{AM}.

For $p=2$ and $s=1$ the problem \eqref{eq:def} was studied in \cite{BG}.  The authors proved existence and
nonexistence results related to the fact that $\l\le \L_{N,2}$ or
$\l>\L_{N,2}$, respectively. The nonlocal case  has been studied  in \cite{AMPP}. The authors by proving a suitable Harnack inequality, analyzed the optimal relation between integrability of the data and the spectral value $\lambda$. Moreover they proved the existence of a critical exponent $q_+(\lambda,s)$ depending only on $\l$ such that existence holds for a semilinear problem if and only if the power  $q<q_+(\lambda)$.

For $p\neq 2$ and $s =1$, the problem was first widely analyzed in \cite{GP}. In \cite{AP}, the authors studied  some qualitative and quantitative properties of the weak solutions. In \cite{DGP}, the authors studied a more general class of operator and in particular complete the previous study showing that  if $\frac{2N}{N+1}\le p<2$, the problem has a distributional solution far from the origin. This fact was proved using a class of the Caffarelli-Khon-Nirenberg inequality that holds for any degenerate radial potential in the local case, see \cite{CKN}.
We quote here the recent result in \cite{AABP} where the authors proved the existence of entropy solution for all data in $L^1$ but without the Hardy potential. Such problem has not finite speed of propagation property, that can be immediately extended to problem \eqref{eq:def}.

To study the problem \eqref{eq:def}  in the fractional setting, $s<1$, there appear some challenging difficulties with respect to the local case, that must be solved. Precisely the fractional version of some local results in \cite{DGP} need a deep analysis in the nonlocal framework to reach results on existence.

The paper is organized as follows. In Section \ref{sec2} we give some auxiliary results
related to fractional Sobolev spaces and some functional inequalities. We present also some algebraic inequalities that will be used to overcame the lost of the possibility of \textit{integration by part} for the nonlocal operator. In an Appendix, we give a detailed proof of this algebraic inequality.

To deal with the case $\frac{2N}{N+s}\le p<2$ and $\l>\L_{N,p,s}$, as it was proved in \cite{DGP} in the local case, we need to consider fractional Sobolev spaces with very degenerate potential. In this case and as it was observed in \cite{AB}, on the contrary to the case of singular potential, we need to use a new approach to define the fractional Sobolev spaces. Hence in subsection \ref{degenerate} we define such natural spaces where the solution will live and we give some connection with the spaces defined in \cite{AB}.

In Section \ref{sec3} we will consider the case $\l\le \L_{N,p,s}$,
in this case we prove the existence of a global solution that is in a suitable energy space.

The case $\l>\L_{N,p,s}$ and $p<2$ is studied in Section \ref{singular}. According to the value of $p$, we prove the existence of a solution that is in a suitable fractional Sobolev space.  If $\frac{2N}{N+s}\le p<2$, that is the more delicate case, we are able to prove the existence of a solution far from the origin which is, modulo a suitable weight, in a fractional weighted Sobolev space.

The question of extinction in finite time is analyzed in Section \ref{singg}.
According to smallness condition on $u_0$, we prove the finite time extension properties. The same property is proved if we add \textit{a concave} potential of $u$ as a reaction term in \eqref{eq:def}.

In the last section we consider the case $p>2$ and $\l>\L_{N,p,s}$. Since the finite speed propagation properties does not hold in the nonlocal case, we are able to show that problem \eqref{eq:def} has non nonnegative solution in an appropriate sense. This result can be extended to large class of nonlinearities and can be seen as non local version of the results obtained in \cite{GP} and \cite{AP1}.
\section{Preliminaries and functional setting}\label{sec2}

Let us begin by stating some preliminaries tools about fractional Sobolev spaces and their properties that we will use systematically in this paper.
We refer to \cite{DPV} and \cite{Adams} for more details.

Assume that $s\in (0,1)$ and $p>1$. Let $\O\subset \ren$, then the fractional Sobolev
spaces $W^{s,p}(\Omega)$, is defined by
$$
W^{s,p}(\Omega)\equiv
\Big\{ \phi\in
L^p(\O):\int_{\O}\dint_{\O}|\phi(x)-\phi(y)|^pd\nu<+\infty\Big\}
$$
where $d\nu=\dyle\frac{dxdy}{|x-y|^{N+ps}}$. It is clear that $W^{s,p}(\O)$ is a Banach space endowed with the following norm
$$
\|\phi\|_{W^{s,p}(\O)}=
\Big(\dint_{\O}|\phi(x)|^pdx\Big)^{\frac 1p}
+\Big(\dint_{\O}\dint_{\O}|\phi(x)-\phi(y)|^pd\nu\Big)^{\frac
1p}.
$$
In the same way we define the space $W^{s,p}_{0} (\O)$ as
the completion of $\mathcal{C}^\infty_0(\O)$ with respect to the
previous norm.

If $\O$ is bounded regular domain, we
can endow $W^{s,p}_{0}(\O)$ with the equivalent norm
$$
||\phi||_{W^{s,p}_{0}(\O)}=
\Big(\int_{\O}\dint_{\O}\dfrac{|\phi(x)-\phi(y)|^p}{|x-y|^{N+ps}}{dxdy}\Big)^{\frac
1p}.
$$

The next Sobolev inequality is proved in \cite{DPV}.
\begin{Theorem} \label{Sobolev}(Fractional Sobolev inequality)
Assume that $0<s<1, p>1$ satisfy $ps<N$. There exists a positive constant $S\equiv S(N,s,p)$ such that for all
$v\in C_{0}^{\infty}(\ren)$,
$$
\iint_{\re^{2N}}
\dfrac{|v(x)-v(y)|^{p}}{|x-y|^{N+ps}}\,dxdy\geq S
\Big(\dint_{\mathbb{R}^{N}}|v(x)|^{p_{s}^{*}}dx\Big)^{\frac{p}{p^{*}_{s}}},
$$
where $p^{*}_{s}= \dfrac{pN}{N-ps}$.
\end{Theorem}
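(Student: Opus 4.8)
The plan is to prove the embedding $W^{s,p}(\ren)\hookrightarrow L^{p^*_s}(\ren)$ by the classical dyadic truncation argument (as carried out in \cite{DPV}), reducing it to the endpoint case $p=1$. Since $\big||v(x)|-|v(y)|\big|\le|v(x)-v(y)|$ pointwise and $\|\,|v|\,\|_{L^{p^*_s}}=\|v\|_{L^{p^*_s}}$, we may assume $v\ge 0$. The heart of the matter is the case $p=1$: for every bounded, compactly supported $w\ge 0$,
\begin{equation}\label{plan-endpoint}
\|w\|_{L^{\frac{N}{N-s}}(\ren)}\ \le\ C_{N,s}\iint_{\re^{2N}}\frac{|w(x)-w(y)|}{|x-y|^{N+s}}\,dx\,dy .
\end{equation}
I would prove \eqref{plan-endpoint} via the layer-cake identity $|w(x)-w(y)|=\int_0^{\infty}\big|\chi_{\{w>t\}}(x)-\chi_{\{w>t\}}(y)\big|\,dt$, which by Fubini turns the right-hand side of \eqref{plan-endpoint} into $\int_0^{\infty}P_s(\{w>t\})\,dt$ with $P_s(E):=\iint\big|\chi_E(x)-\chi_E(y)\big|\,|x-y|^{-N-s}\,dx\,dy$ the fractional $s$-perimeter (finite for smooth sets precisely because $s<1$); then invoke the fractional isoperimetric inequality $P_s(E)\ge c_{N,s}\,|E|^{(N-s)/N}$ and conclude with Minkowski's integral inequality $\|w\|_{L^q}=\big\|\int_0^{\infty}\chi_{\{w>t\}}\,dt\big\|_{L^q}\le\int_0^{\infty}|\{w>t\}|^{1/q}\,dt$ applied with $q=\frac{N}{N-s}$. (Should one wish to avoid the isoperimetric inequality, \eqref{plan-endpoint} can also be obtained directly by telescoping $w$ over dyadic balls centred at a Lebesgue point, inserting the Gagliardo kernel, and running a maximal-function/weak-$(1,1)$ interpolation argument.)

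Granting \eqref{plan-endpoint}, I bootstrap to general $p$ as follows. For $v\ge 0$ in $C_0^{\infty}(\ren)$ and $k\in\Z$, set $A_k:=\{v>2^k\}$, $D_k:=\{2^k<v\le 2^{k+1}\}$, and introduce the truncations $v_k:=\min\{(v-2^{k})^{+},\,2^k\}$, so that $0\le v_k\le 2^k$, $v_k\equiv 2^k$ on $A_{k+1}$, $v_k\equiv 0$ off $A_k$, the only region where $v_k$ is neither $0$ nor $2^k$ being $D_k$, and $\sum_{k\in\Z}v_k\equiv v$. Applying \eqref{plan-endpoint} to $v_k$ and using $v_k\ge 2^k\chi_{A_{k+1}}$ on the left gives
\begin{equation}\label{plan-level}
2^k\,|A_{k+1}|^{\frac{N-s}{N}}\ \le\ C_{N,s}\iint_{\re^{2N}}\frac{|v_k(x)-v_k(y)|}{|x-y|^{N+s}}\,dx\,dy .
\end{equation}
The core of the argument is to bound the right-hand side of \eqref{plan-level} by the Gagliardo $p$-energy of $v$ on the dyadic shell $\{2^{k-1}<v\le 2^{k+2}\}$. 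One splits $\re^{2N}$ into $\{|x-y|\ge 1\}$ --- on which $|v_k(x)-v_k(y)|\le v_k(x)+v_k(y)$ gives a tail bounded by $C\|v_k\|_{L^1}\le C\,2^k|A_k|$ --- and $\{|x-y|<1\}$, on which one uses $|v_k(x)-v_k(y)|\le\min\{|v(x)-v(y)|,\,2^k\}$ together with the saturation of $v_k$ off $D_k$ and a Hölder step to recover the exponent $p$ and the kernel $|x-y|^{-N-ps}$, up to powers of $2^k$ that will combine correctly in the sum below. Raising \eqref{plan-level} to the power $p$, summing over $k\in\Z$ (the shells $\{2^{k-1}<v\le 2^{k+2}\}$ having bounded overlap), and combining with $\|v\|_{L^{p^*_s}}^{p^*_s}\le\sum_k 2^{(k+1)p^*_s}|D_k|$, the relation $p^*_s=\frac{N}{N-ps}\,p$, and a discrete Hölder inequality for the sequence $\big(2^k|A_k|^{\frac{N-s}{N}}\big)_{k\in\Z}$, collapses the family \eqref{plan-level} into $\|v\|_{L^{p^*_s}(\ren)}^p\le C\iint_{\re^{2N}}\frac{|v(x)-v(y)|^p}{|x-y|^{N+ps}}\,dx\,dy$, i.e. the assertion with $S=C^{-p}$.

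I expect the resummation --- passing from the one-level estimates \eqref{plan-level} to the global $L^{p^*_s}$ bound --- to be the main obstacle. A fixed truncation level $k$ only detects the pairs $(x,y)$ lying across that level, and since each such pair is detected at many levels one cannot merely add the inequalities \eqref{plan-level}; the powers of $2^k$ and of the measures $|A_k|$ must be tracked so that the kernel emerges with the exponent $N+ps$ rather than $N+s$. This is also why the truncation must be applied to $v$ itself (yielding functions bounded by $2^k$) rather than, say, to a power $v^{\gamma}$ and then to \eqref{plan-endpoint}: the latter route forces a non-integrable kernel $|x-y|^{-N}$ after Hölder. The only other substantive ingredient is the fractional isoperimetric inequality behind \eqref{plan-endpoint}, which, though classical, is not itself elementary.
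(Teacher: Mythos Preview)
The paper does not prove this statement; it simply records it and cites \cite{DPV}. So the comparison is between your sketch and the argument in \cite{DPV}.

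Your treatment of the endpoint $p=1$ (layer--cake, fractional isoperimetric inequality, Minkowski) is correct and standard. The difficulty is the bootstrap to $p>1$, and there is a genuine gap there. You apply \eqref{plan-endpoint}, i.e.\ the $p=1$ inequality with the \emph{same} fractional parameter $s$, to $v_k$, which yields the exponent $(N-s)/N$ on $|A_{k+1}|$. Two mismatches follow and neither is cured by what you describe. First, the H\"older step on $\{|x-y|<1\}$ that is supposed to convert $\iint |v_k(x)-v_k(y)|\,|x-y|^{-N-s}\,dx\,dy$ into something controlled by $\iint |v(x)-v(y)|^{p}|x-y|^{-N-ps}\,dx\,dy$ leaves the dual factor $\big(\iint_{\mathrm{region},\,|x-y|<1}|x-y|^{-N}\,dx\,dy\big)^{1/p'}$, which diverges; the cap $|v_k(x)-v_k(y)|\le 2^k$ does not help, since the blow-up sits in the kernel, not in the function. (The appearance of a fixed length scale in a scale-invariant inequality is already a warning.) Second, even if the right side of \eqref{plan-level} were under control, raising to the $p$-th power gives $2^{kp}|A_{k+1}|^{p(N-s)/N}$, whereas the summation you need --- via $\|v\|_{p^*_s}^{p^*_s}\le\sum_k 2^{kp^*_s}|A_k|$ and $\ell^1\hookrightarrow\ell^{p^*_s/p}$ --- requires the exponent $(N-ps)/N$ on $|A_k|$; for $p>1$ these differ, and no discrete H\"older bridges that gap.

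The proof in \cite{DPV} does \emph{not} pass through $p=1$. It bounds the Gagliardo energy from below directly: on $D_i\times(\ren\setminus A_{i-1})$ one has $|v(x)-v(y)|\ge 2^{i-1}$, hence
\[
\iint_{\re^{2N}}\frac{|v(x)-v(y)|^p}{|x-y|^{N+ps}}\,dx\,dy\ \ge\ c\sum_i 2^{ip}\int_{D_i}\!\int_{\ren\setminus A_{i-1}}\frac{dy\,dx}{|x-y|^{N+ps}}\ \ge\ c\sum_{a_{i-1}\ne 0} 2^{ip}\,a_i\,a_{i-1}^{-ps/N},
\]
where $a_i=|A_i|$ and the last inequality uses that $\ren\setminus A_{i-1}$ has density $\ge\tfrac12$ outside a ball of radius $\sim a_{i-1}^{1/N}$. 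A separate elementary sequence lemma then shows $\sum_i 2^{ip}a_i^{(N-ps)/N}\le C\sum_i 2^{ip}a_i\,a_{i-1}^{-ps/N}$, and the left side dominates $\|v\|_{p^*_s}^{p}$ by $\ell^1\hookrightarrow\ell^{p^*_s/p}$. If you insist on going through $p=1$, you must apply \eqref{plan-endpoint} with fractional parameter $sp$ rather than $s$ so that the exponent on $|A_{k+1}|$ is $(N-sp)/N$ from the start; but this is only available when $sp<1$, so that route is not uniform in $(s,p)$.
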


To treat the case $\l=\L_{N,p,s}$, we need the next improved Hardy-Sobolev inequality obtained in \cite{AB} and \cite{AM}.
\begin{Theorem}\label{impro}
Let $p>1$, $0<s<1$ and $N>ps$. Assume that $\Omega\subset
\mathbb{R}^N$ is a bounded domain containing the origin, then for all $1<q<p$, there
exists a positive constant $C=C(\Omega, q, N, s)$ such that for
all $u\in \mathcal{C}_0^\infty(\Omega)$,
\begin{equation}\label{sara}
\frac 12\iint_{\mathbb{R}^{2N}}\,
\frac{|u(x)-u(y)|^{p}}{|x-y|^{N+ps}}\,dx\,dy - \Lambda_{N,p,s}
\int_{\mathbb{R}^{N}} \frac{|u(x)|^p}{|x|^{ps}}\,dx\geq C
\int_{\Omega}\int_{\Omega}\frac{|u(x)-u(y)|^p}{|x-y|^{N+qs}}\,dx\,dy.
\end{equation}
\end{Theorem}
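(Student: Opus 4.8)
The plan is to use the \emph{ground state transformation}. Set $\gamma:=\dfrac{N-ps}{p}$, so that $|x|^{-\gamma}$ is the virtual optimizer of \eqref{hardy} (it does not belong to $W^{s,p}$ and the constant is not attained): a direct computation with the symmetric kernel defining $(-\Delta^{s}_{p})$ shows that the number $\Lambda_{N,p,s}$ of \eqref{LL}--\eqref{kkk} is precisely the one this power produces. For $u\in\mathcal C^{\infty}_{0}(\Omega)$ write $u(x)=|x|^{-\gamma}v(x)$, i.e.\ $v(x)=|x|^{\gamma}u(x)$; then $v$ is supported in $\overline\Omega$ and vanishes at the origin at rate $|x|^{\gamma}$. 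The first and, to my mind, main step is the \emph{nonlinear ground state representation}: there is $c_{p}>0$, depending only on $p$, such that for every such $u$
\begin{equation}\label{prop-gsr}
\frac12\iint_{\mathbb{R}^{2N}}\frac{|u(x)-u(y)|^{p}}{|x-y|^{N+ps}}\,dx\,dy-\Lambda_{N,p,s}\int_{\mathbb{R}^{N}}\frac{|u(x)|^{p}}{|x|^{ps}}\,dx\ \geq\ c_{p}\iint_{\mathbb{R}^{2N}}\frac{|v(x)-v(y)|^{p}}{|x-y|^{N+ps}}\,\frac{dx\,dy}{(|x|\,|y|)^{\frac{N-ps}{2}}}=:c_{p}\,Q(v).
\end{equation}
For $p=2$ this is an identity, obtained by expanding $\big||x|^{-\gamma}v(x)-|y|^{-\gamma}v(y)\big|^{2}$ and noting that the cross term, integrated against $|x-y|^{-N-2s}$, reproduces exactly $2\Lambda_{N,2,s}\int|v|^{2}|x|^{-2s}\,dx$ by the very definition of $\Lambda_{N,2,s}$. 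For $p\neq2$ the expansion of the square is unavailable and must be replaced by a pointwise convexity inequality of the type collected in Section~\ref{sec2} and proved in the Appendix, which plays the role of the missing \emph{integration by parts}; carrying this out, in the spirit of the nonlinear ground state representation of \cite{FS}, is exactly where the nonlocal problem departs from the local one.

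Granting \eqref{prop-gsr}, it remains to recover the right-hand side of \eqref{sara} on the bounded set $\Omega$. Since $v$ is supported in $\overline\Omega$ and the kernel is nonnegative, $Q(v)$ dominates the same integral restricted to $\Omega\times\Omega$; there the degenerate weight is bounded below by $R^{-(N-ps)}$, where $R=\sup_{x\in\Omega}|x|$, and, since $q<p$, $|x-y|^{-N-ps}\geq(\operatorname{diam}\Omega)^{-(p-q)s}|x-y|^{-N-qs}$. Hence the left-hand side of \eqref{sara} is $\geq c_{1}\iint_{\Omega}\int_{\Omega}\frac{|v(x)-v(y)|^{p}}{|x-y|^{N+qs}}\,dx\,dy$. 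One then passes back to $u=|x|^{-\gamma}v$ by the splitting, for $|x|\geq|y|$,
$$u(x)-u(y)=|x|^{-\gamma}\big(v(x)-v(y)\big)+\big(|x|^{-\gamma}-|y|^{-\gamma}\big)v(y)$$
(and symmetrically for $|y|\geq|x|$); using $|x|^{-\gamma p}\leq(|x||y|)^{-\gamma p/2}$ on $\{|x|\geq|y|\}$, the elementary bound $\int_{\Omega}\big||x|^{-\gamma}-|y|^{-\gamma}\big|^{p}|x-y|^{-N-qs}\,dx\leq C|y|^{-\gamma p-qs}$, and the identity $|y|^{-\gamma p-qs}|v(y)|^{p}=|y|^{-qs}|u(y)|^{p}$, one arrives at
$$\iint_{\Omega}\int_{\Omega}\frac{|u(x)-u(y)|^{p}}{|x-y|^{N+qs}}\,dx\,dy\ \leq\ C\iint_{\Omega}\int_{\Omega}\frac{|v(x)-v(y)|^{p}}{|x-y|^{N+qs}}\,dx\,dy\ +\ C\int_{\Omega}\frac{|u(x)|^{p}}{|x|^{qs}}\,dx .$$

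The first term on the right is already controlled by the left-hand side of \eqref{sara}. The remaining term $\int_{\Omega}|u|^{p}|x|^{-qs}\,dx$ — equivalently, via the transformation, $\int_{\Omega}|v|^{p}|x|^{-(N-(p-q)s)}\,dx$ with $N-(p-q)s<N$ — must be absorbed into the left-hand side as well, and this is the second substantive point: it amounts to a weighted fractional Hardy (Caffarelli--Kohn--Nirenberg) inequality for the degenerate weight $(|x||y|)^{-\frac{N-ps}{2}}$, of the kind established in \cite{AB}. Here one genuinely needs the structure $v=|x|^{\gamma}u$ (a dyadic decomposition around the origin, in which the averages of $v$ over the annuli $\{2^{-j-1}<|x|<2^{-j}\}$ decay like $2^{-j\gamma}$, makes the relevant series converge), since the fact that the fractional order $qs<1$ admits no boundary Hardy inequality rules out any argument by interpolation or Sobolev embedding. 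Once this is in place the remainder is absorbed and \eqref{sara} follows with $C=C(\Omega,q,N,s)>0$. The two genuine obstacles are therefore \eqref{prop-gsr} for $p\neq2$ and the degenerate-weight Hardy inequality just mentioned; everything else is routine bookkeeping on the bounded set $\Omega$.
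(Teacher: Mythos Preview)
The paper does not prove this statement; it is quoted from \cite{AB} and \cite{AM}. Your outline via the nonlinear ground state representation $u=|x|^{-\gamma}v$, $\gamma=(N-ps)/p$, is precisely the strategy of those references (and of \cite{FS}, where \eqref{prop-gsr} originates), so at the level of approach there is nothing to compare: you have reconstructed the intended route and correctly isolated its two genuine difficulties.

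One point deserves a sharper formulation. After the splitting you arrive at the remainder $\int_\Omega|u|^p|x|^{-qs}\,dx=\int_\Omega|v|^p|x|^{-N+(p-q)s}\,dx$, which you propose to absorb via ``a weighted fractional Hardy inequality for the degenerate weight $(|x||y|)^{-(N-ps)/2}$, of the kind established in \cite{AB}''. Be careful here: the weight exponent $\beta=(N-ps)/2$ is exactly the endpoint at which the weighted Hardy inequality \eqref{IngL1} degenerates (the optimal constant collapses to $0$, this being equivalent to the non-attainment of $\Lambda_{N,p,s}$). So one cannot simply invoke \eqref{IngL1}. What saves the day is that the target exponent $-N+(p-q)s$ is strictly \emph{subcritical} (it is $>-N$ because $q<p$), and on the \emph{bounded} set $\Omega$ this slack, together with the portion of $Q(v)$ coming from $\Omega\times\Omega^c$ (where $v\equiv0$ outside $\Omega$), does yield the required control; this is what is actually carried out in \cite{AB,AM}. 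Your dyadic sketch points in the right direction but, as written, seems to lean on the pointwise bound $|v(x)|\le C|x|^{\gamma}$ (coming from $u\in L^\infty$), which would make the constant depend on $\|u\|_\infty$ rather than only on $(\Omega,q,N,s)$. The uniform estimate must come from $Q(v)$ itself, not from the a~priori size of $u$.
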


Now, for $w\in W^{s,p}(\ren)$, we set
$$
(-\Delta)^s_{p} w(x)={ P.V. }
\dint_{\ren}\dfrac{|w(x)-w(y)|^{p-2}(w(x)-w(y))}{|x-y|^{N+ps}}{dy}.
$$
It is clear that for all $w, v\in W^{s,p}(\ren)$, we have
$$
\langle (-\Delta)^s_{p}w,v\rangle
=\dfrac 12\iint_{\re^{2N}}\dfrac{|w(x)-w(y)|^{p-2}(w(x)-w(y))(v(x)-v(y))}{|x-y|^{N+ps}}{dxdy}.
$$
If $w, v\in W^{s,p}_0(\O)$, we have
$$
\langle (-\Delta)^s_{p}w,v\rangle
=\dfrac 12\iint_{D_\O}\dfrac{|w(x)-w(y)|^{p-2}(w(x)-w(y))(v(x)-v(y))}{|x-y|^{N+ps}}{dxdy},
$$
where $D_{\O}=\ren\times \ren\setminus \mathcal{C}\O\times
\mathcal{C}\O$.

The next Picone inequality will be useful to prove the non existence result for $p>2$.
\begin{Theorem}(Picone inequality)\label{pic}
Let $w\in W^{s,p}_0(\O)$ be such that $w>0$ in $\O$. Assume
that $(-\Delta)^s_{p}w = \nu$ with $\nu\in L^1_{loc}(\ren)$ and
$\nu\gneqq 0$, then for all $\psi\in \mathcal{C}^\infty_0(\O)$, we
have
$$
\frac 12
\iint_{D_\O}\dfrac{|\psi(x)-\psi(y)|^{p}}{|x-y|^{N+ps}}dx\,dy\ge \io
\frac{(-\Delta)^s_{p}w }{w^{p-1}}|\psi|^p dx.
$$
\end{Theorem}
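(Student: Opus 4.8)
The plan is to run the classical Picone argument — test the equation $(-\Delta)^{s}_{p}w=\nu$ against the function $\varphi=|\psi|^{p}w^{1-p}$ — but with two adaptations forced by the nonlocal setting: the differential Picone identity must be replaced by a \emph{pointwise} inequality for finite differences of $w$, and, since $w$ need not be bounded below on $\operatorname{supp}\psi$, the test function must first be regularized as $\varphi_{\varepsilon}=|\psi|^{p}(w+\varepsilon)^{1-p}$ before letting $\varepsilon\downarrow0$. \emph{Step 1 (the algebraic inequality).} First I would prove that for all $a,b>0$ and all $c,d\ge0$,
$$
|a-b|^{p-2}(a-b)\Big(\frac{c^{p}}{a^{p-1}}-\frac{d^{p}}{b^{p-1}}\Big)\le|c-d|^{p}.
$$
The left-hand side is invariant under the simultaneous swap $(a,c)\leftrightarrow(b,d)$, so one may assume $a>b$ (the case $a=b$ being trivial). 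Setting $t=b/a\in(0,1)$, the inequality becomes $(1-t)^{p-1}\big(c^{p}-t^{1-p}d^{p}\big)\le|c-d|^{p}$; this is obvious when $c\le d$ (the bracket is $\le0$), and when $c>d$ it follows from the convexity of $r\mapsto r^{p}$ applied to the convex combination $c=(1-t)\cdot\frac{c-d}{1-t}+t\cdot\frac{d}{t}$, after multiplying the resulting estimate by $(1-t)^{p-1}$. (If this algebraic fact is already established in the Appendix, I would simply invoke it.)

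\emph{Step 2 (regularized test function and pairing identity).} For $\varepsilon>0$ put $\varphi_{\varepsilon}:=|\psi|^{p}(w+\varepsilon)^{1-p}$. Since $|\psi|^{p}\in\mathcal{C}^{1}_{0}(\O)$ and $r\mapsto(r+\varepsilon)^{1-p}$ is bounded and Lipschitz on $[0,\infty)$, a routine product/composition estimate for the Gagliardo seminorm (using $w\in W^{s,p}_{0}(\O)$) shows $\varphi_{\varepsilon}\in W^{s,p}_{0}(\O)\cap L^{\infty}(\O)$ with support in $\O$; by density it is therefore admissible in the weak formulation against $\nu\in L^{1}_{\mathrm{loc}}(\ren)$, and using the pairing formula recalled above,
$$
\int_{\O}\frac{\nu\,|\psi|^{p}}{(w+\varepsilon)^{p-1}}\,dx=\langle(-\Delta)^{s}_{p}w,\varphi_{\varepsilon}\rangle=\frac12\iint_{D_{\O}}\frac{|w(x)-w(y)|^{p-2}(w(x)-w(y))\big(\varphi_{\varepsilon}(x)-\varphi_{\varepsilon}(y)\big)}{|x-y|^{N+ps}}\,dx\,dy.
$$

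\emph{Step 3 (conclusion).} In the last integrand I apply Step 1 with $a=w(x)+\varepsilon$, $b=w(y)+\varepsilon$ — note $a-b=w(x)-w(y)$, so the weight $|w(x)-w(y)|^{p-2}(w(x)-w(y))$ is reproduced exactly — and $c=|\psi(x)|$, $d=|\psi(y)|$; since $\big||\psi(x)|-|\psi(y)|\big|\le|\psi(x)-\psi(y)|$, the right-hand side is bounded above, uniformly in $\varepsilon$, by $\frac12\iint_{D_{\O}}\frac{|\psi(x)-\psi(y)|^{p}}{|x-y|^{N+ps}}\,dx\,dy$. On the left-hand side $\nu\ge0$ and, as $\varepsilon\downarrow0$, $|\psi|^{p}(w+\varepsilon)^{1-p}\uparrow|\psi|^{p}w^{1-p}$ pointwise in $\O$ (here the hypothesis $w>0$ in $\O$ enters), so monotone convergence yields $\int_{\O}\frac{\nu}{w^{p-1}}|\psi|^{p}\,dx\le\frac12\iint_{D_{\O}}\frac{|\psi(x)-\psi(y)|^{p}}{|x-y|^{N+ps}}\,dx\,dy$, which is the assertion since $\nu=(-\Delta)^{s}_{p}w$.

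The main obstacle is Step 1: in the nonlocal framework one cannot integrate by parts, so everything hinges on this purely algebraic pointwise inequality for "finite-difference gradients", and the bookkeeping must be arranged so that shifting $w$ by $\varepsilon$ leaves the difference $w(x)-w(y)$ that multiplies the kernel unchanged (this is precisely why one shifts $w$ by an additive constant rather than rescaling). The secondary technical point is the admissibility of $\varphi_{\varepsilon}$: checking that $\varphi_{\varepsilon}\in W^{s,p}_{0}(\O)$ and that the weak formulation may be tested against it even though $\nu$ is merely locally integrable, which is where the boundedness and compact support of $\varphi_{\varepsilon}$ are used.
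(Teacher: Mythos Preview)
Your argument is correct and is precisely the standard proof of the nonlocal Picone inequality. The paper does not supply its own proof of this theorem; it simply refers the reader to \cite{BPV} and \cite{AB}, and the approach taken there is exactly the one you outline: the pointwise ``discrete Picone'' inequality
\[
|a-b|^{p-2}(a-b)\Big(\frac{c^{p}}{a^{p-1}}-\frac{d^{p}}{b^{p-1}}\Big)\le|c-d|^{p},\qquad a,b>0,\ c,d\ge0,
\]
combined with testing the equation against $|\psi|^{p}(w+\varepsilon)^{1-p}$ and passing to the limit $\varepsilon\downarrow0$ by monotone convergence. Your convexity proof of the algebraic inequality is clean and matches the argument in \cite{BPV}; note that the inequality proved in the Appendix of the present paper (Lemma~\ref{real11}) is a \emph{different} algebraic estimate, so you would indeed need to supply the Picone-type inequality yourself, as you do. The only places requiring a little extra care are exactly the two you flag: verifying $\varphi_{\varepsilon}\in W^{s,p}_{0}(\O)\cap L^{\infty}(\O)$ via the Lipschitz bound $|(r+\varepsilon)^{1-p}-(r'+\varepsilon)^{1-p}|\le (p-1)\varepsilon^{-p}|r-r'|$, and justifying that the weak formulation of $(-\Delta)^{s}_{p}w=\nu$ can be tested against such $\varphi_{\varepsilon}$ when $\nu$ is merely in $L^{1}_{\mathrm{loc}}$ (which follows by approximation, using the compact support and boundedness of $\varphi_{\varepsilon}$).
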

We refer to \cite{BPV} and \cite{AB} for a complete proof and other application of the Picone inequality.

We define now the corresponding parabolic spaces.

The space $L^{p}(0,T; W^{s,p}_0(\O))$ is defined as the set of function $\phi$ such that
$\phi\in L^p(\O_T)$ with $||\phi||_{L^{p}(0,T; W^{s,p}_0(\O))}<\infty$ where
$$
||\phi||_{L^{p}(0,T; W^{s,p}_0(\O))}=\Big(\int_0^T\iint_{D_{\O}}|\phi(x,t)-\phi(y,t)|^pd\nu\,dt\Big)^{\frac
1p}.
$$
It is clear that $L^{p}(0,T; W^{s,p}_0(\O))$ is a Banach spaces.

In the case where the data $(f,u_0)\in L^2(\O_T)\times L^2(\O)$, then we can deal with energy solution, more precisely we have the next definition.
\begin{Definition}\label{energy}
Assume  $(f,u_0)\in L^2(\O_T)\times L^2(\O)$, then we say that $u$ is an energy solution to problem \eqref{eq:def} if $u\in L^{p}(0,T; W^{s,p}_0(\O))\cap \mathcal{C}([0,T], L^p(\O))$, $u_t\in L^{p'}(0,T; W^{-s,p'}_0(\O))$, where $W^{-s,p'}_0(\O)$ is the dual space of $W^{s,p}_0(\O)$ and for all $v\in  L^{p}(0,T; W^{s,p}_0(\O))$ we have
\begin{equation*}
\begin{array}{lll}
&\dyle\int_0^T\langle u_t, v\rangle dt +\dfrac 12\int_0^T\iint_{D_{\O}}U(x,y,t)(v(x,t)-v(y,t))d\nu\ dt\\
&\dyle=\l\iint_{\O_T} \frac{|u|^{p-2}u}{|x|^{ps}}v dx\,dt
\end{array}
\end{equation*}
and $u(x,.)\to u_0$ strongly in $L^2(\O)$ as $t\to 0$ where
$$
U(x,y,t)\equiv |u(x,t)-u(y,t)|^{p-2}(u(x,t)-u(y,t)).
$$
\end{Definition}
Notice that the existence of energy solution follows using classical argument for monotone operator as in \cite{Lio}.

Before closing this section, we  recall some useful algebraic inequalities which will be used throughout the paper. The proof
follows using suitable rescaling arguments.
\begin{Lemma}\label{algg}
Assume that $p\ge 1$, $(a, b) \in (\re^+)^2$ and $\a>0$, then there exist $c_1, c_2,c_3, c_4>0$, such that
\begin{equation}\label{alge1}
(a+b)^\a\le c_1a^\a+c_2b^\a,
\end{equation}
and
\begin{equation}\label{alge3}
|a-b|^{p-2}(a-b)(a^{\a}-b^{\a})\ge c_3|a^{\frac{p+\a-1}{p}}-b^{\frac{p+\a-1}{p}}|^p.
\end{equation}
In the case where $\a\ge 1$, then under the same conditions on $a,b,p$ as above, we have
\begin{equation}\label{alge2}
|a+b|^{\a-1}|a-b|^{p}\le c_4 |a^{\frac{p+\a-1}{p}}-b^{\frac{p+\a-1}{p}}|^p.
\end{equation}
\end{Lemma}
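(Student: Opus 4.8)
The plan is to prove each inequality by reducing it, via homogeneity, to a one-variable statement on a compact set and then invoking continuity and compactness. First consider \eqref{alge1}. Both sides are positively homogeneous of degree $\a$ in $(a,b)$, so it suffices to prove it on the slice $a+b=1$, i.e. to find $c_1,c_2>0$ with $1\le c_1 a^\a+c_2 b^\a$ for all $a\in[0,1]$, $b=1-a$. Since the function $a\mapsto a^\a+(1-a)^\a$ is continuous and strictly positive on $[0,1]$ (it equals $1$ at the endpoints and is bounded below by a positive constant by compactness), taking $c_1=c_2$ equal to the reciprocal of this minimum works; one can of course be less wasteful, but any positive lower bound suffices. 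Alternatively, \eqref{alge1} is elementary: if $\a\le 1$ it holds with $c_1=c_2=1$ by subadditivity of $t\mapsto t^\a$, and if $\a\ge 1$ it holds with $c_1=c_2=2^{\a-1}$ by convexity.

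Next, \eqref{alge3} and \eqref{alge2}. The pattern is the same. In \eqref{alge3}, both sides are homogeneous of degree $p+\a-1$ in $(a,b)$, and the inequality is symmetric under swapping $a$ and $b$, so we may assume $b\le a$ and, by scaling, $a=1$. We must then show
$$
(1-b)^{p-1}(1-b^{\a})\ge c_3\bigl(1-b^{\frac{p+\a-1}{p}}\bigr)^p\qquad\text{for all }b\in[0,1].
$$
Call the left side $g(b)$ and the right side (without $c_3$) $h(b)$; both are continuous on $[0,1]$, both vanish at $b=1$, and both are positive on $[0,1)$. The existence of $c_3>0$ with $g\ge c_3 h$ on all of $[0,1]$ follows once we check that $g/h$ does not tend to $0$ as $b\to 1^-$: expanding near $b=1$ with $b=1-\eps$ gives $g(b)\sim \eps^{p-1}\cdot \a\eps=\a\eps^p$ and $h(b)\sim\bigl(\frac{p+\a-1}{p}\bigr)^p\eps^p$, so $g/h$ has a positive finite limit at $b=1$; being continuous and positive on $[0,1)$ as well, it is bounded below by a positive constant on $[0,1]$, which we take as $c_3$. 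The same scheme handles \eqref{alge2} when $\a\ge1$: reduce to $a=1$, $b\in[0,1]$, and compare $(1+b)^{\a-1}(1-b)^p$ with $\bigl(1-b^{\frac{p+\a-1}{p}}\bigr)^p$; near $b=1$ the first behaves like $2^{\a-1}\eps^p$ and the second like $\bigl(\frac{p+\a-1}{p}\bigr)^p\eps^p$, again a positive ratio, while near $b=0$ both are positive, so a compactness argument yields $c_4$.

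The only genuine subtlety — and the place I would be most careful — is the behaviour of the ratios at the coincidence point $a=b$ (i.e. $b\to1$ after scaling): all four quantities degenerate there, so the constants are controlled not by the minimum of a positive continuous function but by a limit of a $0/0$ indeterminate form, which is why the Taylor expansions around $b=1$ are the real content. One should also note the degenerate normalizations: if $a=0$ (so $b\ne0$) one rescales by $b$ instead, and \eqref{alge3}, \eqref{alge2} are trivially true (the left sides are positive, the right sides finite) — or simply observe that the full statement on $\{a+b=1\}$ already covers every ray. A detailed verification of \eqref{alge3}, which is the inequality used to replace integration by parts for $(-\Delta)^s_p$, is deferred to the Appendix as announced.
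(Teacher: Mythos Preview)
Your argument is correct and is precisely the ``suitable rescaling argument'' that the paper invokes (the paper offers no further details for this lemma). The homogeneity reduction to a one-variable problem on $[0,1]$, together with the Taylor expansion at the coincidence point $b=1$ to control the $0/0$ ratio, is the standard way to make the rescaling rigorous, and your computations of the leading terms $\alpha\eps^{p}$ and $\bigl(\tfrac{p+\alpha-1}{p}\bigr)^{p}\eps^{p}$ are accurate.

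One small correction of a non-mathematical nature: your closing sentence says that a detailed verification of \eqref{alge3} is deferred to the Appendix. In fact the Appendix treats Lemma~\ref{real11} (inequality \eqref{alge4}), not Lemma~\ref{algg}; the paper gives no further proof of \eqref{alge1}--\eqref{alge2} beyond the one-line remark you have already expanded. You should simply drop that final sentence.
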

The next algebraic inequality is new and can be seen as an extension of the {\it integration by part formula} when using a product as a test function in the local case. The proof is given in the Appendix.
\begin{Lemma}\label{real11}
There exist two positive constants $C_1<1<C_2$ such that for all $a_1,a_2\in \re$ and for all $b_1,b_2\ge 0$, we have
\begin{equation}\label{alge4}
|a_1-a_2|^{p-2}(a_1-a_2)(a_1b_1-a_2b_2)\ge C_1 |a_1b^{\frac{1}{p}}_1-a_2b^{\frac{1}{p}}_2|^p-C_2(\max\{|a_1|, |a_2|\})^p|b^{\frac{1}{p}}_1-b^{\frac{1}{p}}_2|^p.
\end{equation}
\end{Lemma}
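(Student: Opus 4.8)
\noindent\emph{Proof strategy.} The plan is to reduce \eqref{alge4}, by symmetry and homogeneity, to a one--variable inequality. Writing $\sigma_i=b_i^{1/p}$, both sides of \eqref{alge4} are symmetric under $(a_1,b_1)\leftrightarrow(a_2,b_2)$ and positively $1$--homogeneous in $(b_1,b_2)$; the case $b_1=b_2=0$ is trivial, so one may assume $b_1\ge b_2\ge 0$ with $b_1>0$ and, after rescaling, $b_1=1$ and $b_2=\sigma^p$ with $\sigma\in[0,1]$. It then remains to prove, for all $a_1,a_2\in\re$ and $\sigma\in[0,1]$, that
\[
|a_1-a_2|^{p-2}(a_1-a_2)\bigl(a_1-a_2\sigma^p\bigr)\ \ge\ C_1\,|a_1-a_2\sigma|^p-C_2\,m^p(1-\sigma)^p,\qquad m:=\max\{|a_1|,|a_2|\}.
\]
I would then substitute $a:=a_1$, $h:=a_2-a_1$, $\delta:=1-\sigma\in[0,1]$, so that $a_2=a+h$ and $m=\max\{|a|,|a+h|\}$, and use the elementary identities $a_1-a_2\sigma^p=(a+h)\bigl(1-(1-\delta)^p\bigr)-h$ and $a_1-a_2\sigma=(a+h)\delta-h$, together with $|h|^{p-2}h\cdot h=|h|^p$, to rewrite the claim as
\[
|h|^p-|h|^{p-2}h(a+h)\bigl(1-(1-\delta)^p\bigr)-C_1\bigl|(a+h)\delta-h\bigr|^p+C_2\,m^p\delta^p\ \ge\ 0 .
\]

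Next I would bound the two sign--indefinite terms from below, keeping the factor $|a+h|=|a_2|\le m$. Using $0\le 1-(1-\delta)^p\le p\delta$ (valid for $p\ge1$, $\delta\in[0,1]$) gives
\[
-|h|^{p-2}h(a+h)\bigl(1-(1-\delta)^p\bigr)\ \ge\ -|h|^{p-1}|a+h|\bigl(1-(1-\delta)^p\bigr)\ \ge\ -p\,m\,\delta\,|h|^{p-1},
\]
and the triangle inequality with $(x+y)^p\le 2^{p-1}(x^p+y^p)$ gives
\[
-C_1\bigl|(a+h)\delta-h\bigr|^p\ \ge\ -C_1\,2^{p-1}\bigl(|a+h|^p\delta^p+|h|^p\bigr)\ \ge\ -C_1\,2^{p-1}\bigl(m^p\delta^p+|h|^p\bigr).
\]
Hence the left--hand side is at least $(1-C_1 2^{p-1})|h|^p-p\,m\,\delta\,|h|^{p-1}+(C_2-C_1 2^{p-1})m^p\delta^p$. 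I would then fix $C_1:=2^{-p}$, so that $C_1 2^{p-1}=\tfrac12$ and $C_1<1$, leaving the purely algebraic inequality
\[
\tfrac12\,|h|^p-p\,m\,\delta\,|h|^{p-1}+\bigl(C_2-\tfrac12\bigr)m^p\delta^p\ \ge\ 0\qquad\text{for all }|h|,\,\delta,\,m\ge 0 .
\]

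This is obvious when $m\delta=0$; otherwise, dividing by $(m\delta)^p$ and setting $u:=|h|/(m\delta)$, it reduces to $\phi(u):=\tfrac12 u^p-p\,u^{p-1}+C_2-\tfrac12\ge0$ on $[0,\infty)$. Since $\phi(u)\to+\infty$ as $u\to\infty$, $\phi(0)=C_2-\tfrac12>0$, and $\phi'(u)=p\,u^{p-2}\bigl(\tfrac u2-(p-1)\bigr)$ is $\le 0$ on $(0,u_\ast)$ and $\ge0$ on $(u_\ast,\infty)$ with $u_\ast:=2(p-1)$, the minimum of $\phi$ on $[0,\infty)$ is $\phi(u_\ast)=C_2-\tfrac12-\bigl(2(p-1)\bigr)^{p-1}$ (convention $0^0=1$, which also settles $p=1$). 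Thus $\phi\ge0$ as soon as $C_2\ge \tfrac12+\bigl(2(p-1)\bigr)^{p-1}$, and I would take $C_2:=\max\bigl\{2,\ \tfrac12+(2(p-1))^{p-1}\bigr\}$, which satisfies $C_1<1<C_2$; this proves \eqref{alge4}.

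The one genuinely delicate step is the first: guessing the substitution $a_2=a_1+h$, $\sigma=1-\delta$ and the rearrangement that isolates the clean positive term $|h|^p$ and the penalty $C_2 m^p\delta^p$, so that the remaining indefinite contributions are absorbed by the crude bounds above --- after which everything collapses to the scalar estimate $\phi\ge0$. (Alternatively one could avoid explicit constants and argue by compactness on $\{\max\{|a_1|,|a_2|\}=1,\ \sigma\in[0,1]\}$: the quotient $(C_1|a_1-a_2\sigma|^p-|a_1-a_2|^{p-2}(a_1-a_2)(a_1-a_2\sigma^p))/(1-\sigma)^p$ can only blow up at $(a_1,a_2,\sigma)=(1,1,1)$ up to the sign symmetry, and the expansion $1-(1-\delta)^p\sim p\delta$ shows it stays bounded there as well.)
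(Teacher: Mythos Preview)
Your proof is correct and considerably more streamlined than the paper's. The paper's argument (given in the Appendix) proceeds by a lengthy case analysis: first on the signs and order of $a_1,a_2$ (four main cases), then, after normalizing $\delta=a_2/a_1\in[0,1)$, on the order of $b_1,b_2$, and finally on whether $\theta^p\gtrless\delta$; in the delicate sub-sub-case $\theta^p\le\delta$ it uses monotonicity of two auxiliary functions and invokes $p<2$ explicitly (e.g.\ to get $1-\delta+\delta^p\ge\delta$ and $1-p\rho+(p-1)\rho^p\ge 0$). Your route is different: you normalize only in $(b_1,b_2)$, keep $a_1,a_2\in\re$ arbitrary, and after the substitution $h=a_2-a_1$, $\delta=1-\sigma$ you absorb everything by the crude bounds $1-(1-\delta)^p\le p\delta$ and $(x+y)^p\le 2^{p-1}(x^p+y^p)$, reducing to the one--variable minimization of $\phi(u)=\tfrac12 u^p-pu^{p-1}+C_2-\tfrac12$.

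What each approach buys: your argument avoids all sign/order case splits and in fact works for every $p>1$, not just $1<p<2$; it also gives explicit constants $C_1=2^{-p}$ and $C_2=\max\{2,\tfrac12+(2(p-1))^{p-1}\}$. The paper's finer case analysis yields $C_1=(1+\varepsilon)^{-(p-1)}$ for arbitrary $\varepsilon>0$, hence $C_1$ as close to $1$ as one likes (with $C_2$ depending on $\varepsilon$), which is sharper but not needed for the applications in the paper.
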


\subsection{Fractional Sobolev space associated to degenerate potential}\label{degenerate}
To analyze the regularity of solution to problem \eqref{eq:def} when $p<2$, we need to develop some weighted Sobolev type inequalities with degenerate potential. In the local case and as it was proved in \cite{DGP}, this type of estimate was a consequence of the well known Caffarelli-Khon-Nirenberg inequalities proved for a large class of weights that cover all the radial degenerate potentials.

As in \cite{AB} and \cite{AM}, setting
$$
W^{s,p}_\b(\ren)\dyle := \Big\{ \phi\in
L^p(\ren,\frac{dx}{|x|^{2\beta}}):\iint_{\re^{2N}}\dfrac{|\phi(x)-\phi(y)|^p}{|x-y|^{N+ps}}\dfrac{dxdy}{|x|^\beta|y|^\beta}<+\infty\Big\},
$$
then for $-ps<\beta<\frac{N-ps}{2}$, the space $W^{s,p}_\b(\ren)$ is a Banach space endowed with the norm
$$
\|\phi\|_{W^{s,p}_\beta(\ren)}=
\Big(\dint_{\ren}\frac{|\phi(x)|^pdx}{|x|^{2\beta}}\Big)^{\frac 1p}
+\Big(\iint_{\re^{2N}}\dfrac{|\phi(x)-\phi(y)|^p}{|x-y|^{N+ps}}\dfrac{dxdy}{|x|^\beta|y|^\beta}\Big)^{\frac
1p}.
$$
As a consequence the next weighted Sobolev inequality is proved.
\begin{Theorem} \label{Sobolev1}(Weighted fractional Sobolev inequality)
Assume that $0<s<1$ and $p>1$ are such that $ps<N$. Let
$-ps<\beta<\dfrac{N-ps}{2}$, then there exists a positive constant $S(N,s,\beta)$ such that for all
$v\in C_{0}^{\infty}(\ren)$,
\begin{equation}\label{sobolev00}
\dint_{\mathbb{R}^{N}}\dint_{\mathbb{R}^{N}}
\dfrac{|v(x)-v(y)|^{p}}{|x-y|^{N+ps}}\frac{dx}{|x|^{\beta}}\,
\frac{dy}{|y|^{\beta}}\geq S(N,s,\beta)
\Big(\dint_{\mathbb{R}^{N}}
\dfrac{|v(x)|^{p_{s}^{*}}}{|x|^{2\beta\frac{p_{s}^{*}}{p}}}\Big)^{\frac{p}{p^{*}_{s}}},
\end{equation}
where $p^{*}_{s}= \dfrac{pN}{N-ps}$.
\end{Theorem}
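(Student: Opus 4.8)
The plan is to deduce the weighted inequality \eqref{sobolev00} from the unweighted fractional Sobolev inequality of Theorem \ref{Sobolev} by a change of unknown, exactly as one does in the local Caffarelli--Kohn--Nirenberg setting. First I would introduce the substitution $v(x)=|x|^{-\beta}w(x)$ (equivalently $w(x)=|x|^{\beta}v(x)$), so that the right-hand side of \eqref{sobolev00} becomes $\big(\int_{\ren}|w(x)|^{p_s^*}\,dx\big)^{p/p_s^*}$, which is precisely the right-hand side of the unweighted inequality for $w$. It then suffices to show that the weighted Gagliardo seminorm of $v$ controls, up to a constant depending only on $N,s,\beta$, the unweighted Gagliardo seminorm of $w$, i.e.
\begin{equation*}
\iint_{\re^{2N}}\frac{|w(x)-w(y)|^p}{|x-y|^{N+ps}}\,dx\,dy\le C(N,s,\beta)\iint_{\re^{2N}}\frac{|v(x)-v(y)|^p}{|x-y|^{N+ps}}\,\frac{dx}{|x|^\beta}\,\frac{dy}{|y|^\beta},
\end{equation*}
after which Theorem \ref{Sobolev} applied to (a smooth approximation of) $w$ finishes the argument. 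Since $v\in C_0^\infty(\ren)$ and $0<\beta$, the function $w=|x|^{\beta}v$ belongs to $L^{p_s^*}$ and can be approximated so that the unweighted inequality applies.

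The heart of the matter is the pointwise/integrated comparison of the two seminorms. Writing $w(x)-w(y)=|x|^{\beta}\big(v(x)-v(y)\big)+\big(|x|^{\beta}-|y|^{\beta}\big)v(y)$ and symmetrising, one is led to control the extra term
\begin{equation*}
\iint_{\re^{2N}}\frac{\big||x|^{\beta}-|y|^{\beta}\big|^p}{|x-y|^{N+ps}}\,|v(y)|^p\,dx\,dy
\end{equation*}
by the weighted seminorm of $v$. This is where I expect the main obstacle to lie: one cannot argue purely pointwise, and the natural route is to invoke a Hardy-type inequality with the weight $|x|^{-ps}$ — namely, bound the inner integral $\int_{\ren}\big||x|^{\beta}-|y|^{\beta}\big|^p|x-y|^{-N-ps}\,dx$ by $c|y|^{\beta p-ps}$ (a scaling computation, finite exactly because $-ps<\beta<\frac{N-ps}{2}$ keeps the relevant integrals convergent at $0$, at $\infty$ and on the diagonal), and then absorb the resulting term $\int_{\ren}|v(y)|^p|y|^{-ps}|y|^{\beta p}\,dy$ using the fractional Hardy inequality \eqref{hardy} applied to $|x|^{\beta}v$, or more directly the known Hardy inequality in the space $W^{s,p}_\beta(\ren)$ from \cite{AB}, \cite{AM}. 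Carefully tracking the interplay between the $|x|^\beta|y|^\beta$ weight and the $|x|^{2\beta p_s^*/p}$ weight on the Sobolev side is the delicate bookkeeping step.

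An alternative, cleaner route — which I would actually prefer and present — is to bypass the explicit change of variables and instead obtain \eqref{sobolev00} as a direct corollary of the embedding $W^{s,p}_\beta(\ren)\hookrightarrow L^{p_s^*}\big(\ren,|x|^{-2\beta p_s^*/p}dx\big)$, which is essentially the content of the results quoted from \cite{AB} and \cite{AM}: the norm $\|\phi\|_{W^{s,p}_\beta(\ren)}$ defined just above the statement already has the Gagliardo part equal to the left-hand side of \eqref{sobolev00}, so the theorem is the homogeneous (seminorm) version of that embedding. Concretely, I would first establish the inhomogeneous estimate $\big(\int_{\ren}|v|^{p_s^*}|x|^{-2\beta p_s^*/p}dx\big)^{p/p_s^*}\le C\|v\|_{W^{s,p}_\beta(\ren)}^p$ for $v\in C_0^\infty(\ren)$ via the change of unknown above, and then remove the lower-order term $\int|v|^p|x|^{-2\beta}dx$ by the standard scaling/dilation invariance argument: both sides of \eqref{sobolev00} are invariant under $v\mapsto v(\lambda\,\cdot)$ with the same homogeneity, so the lower-order term can be scaled away, yielding the pure seminorm inequality with a constant $S(N,s,\beta)$ depending only on $N,s,\beta$. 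The scaling step is routine; the one genuinely technical point remains the weighted Hardy estimate controlling the commutator term $(|x|^\beta-|y|^\beta)$, and that is where the restriction $-ps<\beta<\frac{N-ps}{2}$ is used in an essential way.
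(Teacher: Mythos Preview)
The paper does not give a self-contained proof of this theorem; it is quoted from \cite{AB}, \cite{AM}. The machinery, however, is developed immediately afterwards in the equivalence theorem $W^{s,p}_\beta(\re^N)=E_\alpha(\ren)$ with $\alpha=-\dfrac{2\beta}{p}$: the first inequality in \eqref{spa} says precisely that the weighted Gagliardo seminorm of $u$ controls the unweighted Gagliardo seminorm of $|x|^{-2\beta/p}u$, and combining this with the unweighted Sobolev inequality (Theorem~\ref{Sobolev}) for $|x|^{-2\beta/p}u$ gives \eqref{sobolev00}. Your overall strategy is therefore exactly the paper's, but there is a concrete error in your execution.

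Your change of unknown has the wrong exponent. You set $w(x)=|x|^{\beta}v(x)$ and claim that the right-hand side of \eqref{sobolev00} becomes $\big(\int_{\ren}|w|^{p^*_s}\big)^{p/p^*_s}$; it does not. Substituting $|v|^{p^*_s}=|w|^{p^*_s}|x|^{-\beta p^*_s}$ into $\int |v|^{p^*_s}|x|^{-2\beta p^*_s/p}$ leaves a residual power $|x|^{-\beta p^*_s(1+2/p)}$, which vanishes only for $\beta=0$. The correct substitution is $w(x)=|x|^{-2\beta/p}v(x)$ (equivalently, work in the space $E_\alpha$ with $\alpha=-2\beta/p$), which is exactly what the paper does. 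With the wrong exponent the commutator integral you write down, $\int_{\ren}\big||x|^{\beta}-|y|^{\beta}\big|^p|x-y|^{-N-ps}\,dx$, fails to be finite for $\beta\ge s$, which lies inside the admissible range $-ps<\beta<\tfrac{N-ps}{2}$ whenever $N>3ps$. Also, your parenthetical ``and $0<\beta$'' is unjustified: the theorem covers negative $\beta$ down to $-ps$.

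Once the exponent is corrected to $-2\beta/p$, your commutator estimate becomes the computation of $g_1$ in the paper's proof of \eqref{spa}, absorbed via the weighted Hardy inequality \eqref{IngL1}; the condition $-ps<\beta<\tfrac{N-ps}{2}$ is exactly what makes $\Upsilon(\gamma)>0$ there. Your ``alternative route'' with a scaling step to remove a lower-order term is then unnecessary: the seminorm comparison $\|\,|x|^{-2\beta/p}v\,\|_{D^{s,p}(\ren)}\le C\|v\|_{W^{s,p}_\beta(\ren)}$ already gives the homogeneous inequality \eqref{sobolev00} directly, with no inhomogeneous intermediate.
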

Hence we can define $D^{s,p}_\beta(\ren)$ as
the completion of $\mathcal{C}^\infty_0(\ren)$ with respect to the norm
$$
\|\phi\|_{D^{s,p}_\beta(\ren)}=\Big(\iint_{\re^{2N}}\dfrac{|\phi(x)-\phi(y)|^p}{|x-y|^{N+ps}}\dfrac{dxdy}{|x|^\beta|y|^\beta}\Big)^{\frac
1p}.
$$
It is clear that the following weighted Hardy inequality
	\begin{equation}\label{IngL1}
	2\Upsilon(\g)\dint\limits_{\re^N} \dfrac{|u(x)|^p}{|x|^{ps+2\beta}}\,dx
\leq\dyle \iint_{\re^{2N}}\dfrac{|u(x)-u(y)|^p}{|x-y|^{N+ps}}\dfrac{dx}{|x|^{\beta}}
	\dfrac{dy}{|y|^\beta},
		\end{equation}
	where
\begin{equation}\label{ga}
\Upsilon(\g)=\dint\limits_1^{+\infty}K(\sigma)(\sigma^\g-1)^{p-1}\left(\sigma^{N-1-\beta-\g(p-1)}-\sigma^{\beta+ps-1}\right)\,d\sigma.
\end{equation}
holds for all $u\in D^{s,p}_\beta(\ren)$.
Notice that $\Upsilon(\g)$ is well defined if $-ps<\beta<\frac{N-ps}{2}$. In this case $\Upsilon(\g)>0$.

If $\beta\le -sp$, then $\mathcal{C}^\infty_0(\ren)\nsubseteqq W^{s,p}_\beta(\ren)$. To see that we fix $\phi\in \mathcal{C}^\infty_0(B_{4}(0))$ such that $0\le \phi\le 1$ and $\phi=1$ in $B_1(0)$, then
\begin{eqnarray*}
\dyle \iint_{\mathbb{R}^{2N}} \dfrac{|\phi(x)-\phi(y)|^p}{|x-y|^{N+ps}}\dfrac{dxdy}{|x|^\beta|y|^\beta} &\ge & \dint_{\ren\backslash B_{4}(0)}\dint_{B_1(0)}\dfrac{1}{|x-y|^{N+ps}}\dfrac{dxdy}{|x|^\beta|y|^\beta}\\
&\ge & \dint_{\ren\backslash B_{4}(0)}\dfrac{1}{(|y|+4)^{N+ps}}\dfrac{dy}{|y|^\beta}
\dint_{B_1(0)}\dfrac{dx}{|x|^\beta}\\
&\ge & C(N-\beta)\dint_{\ren\backslash B_{4}(0)}\dfrac{1}{(|y|+4)^{N+ps}}\dfrac{dy}{|y|^\beta}.
\end{eqnarray*}
Since $\beta\le -ps$, then $\dint_{\ren\backslash B_{4}(0)}\dfrac{1}{(|y|+4)^{N+ps}}\dfrac{dy}{|y|^\beta}=\infty$.

\

Thus to deal with degenerate weight we need to adapt new approach.

Let $-\infty<\a<\frac{N-ps}{2}$ and define the space
$$E_\a(\ren)=\bigg\{u: |x|^\a u\in D^{s,p}(\re^N)\,\,\text{ i.e:} \iint_{\re^{2N}}\dfrac{||x|^\a u(x)-|y|^\a u(y)|^p}{|x-y|^{N+ps}}dxdy< \infty \bigg\}.$$
Using The classical Sobolev inequality we conclude that $E_\a(\ren)$ is a Banach space and
$$
S
\Big(\dint_{\mathbb{R}^{N}}|u(x)|^{p_{s}^{*}}|x|^{p^*_s\a} \Big)^{\frac{p}{p^{*}_{s}}}\le \iint_{\re^{2N}}\dfrac{||x|^\a u(x)-|y|^\a u(y)|^p}{|x-y|^{N+ps}}dxdy,
$$
that can be seen as a Caffarelli-Khon-Nirenberg inequality.

The main result of this subsection is the following.
\begin{Theorem}
Assume that $-ps<\beta<\frac{N-ps}{2}$, then $W^{s,p}_\b(\re^N)=E_\a(\ren)$ with $\a=-\dfrac{2\b}{p}$.
\end{Theorem}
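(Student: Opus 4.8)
The statement asserts the equality of two Banach spaces, $W^{s,p}_\beta(\ren)$ and $E_\alpha(\ren)$ with $\alpha=-2\beta/p$, under the hypothesis $-ps<\beta<\frac{N-ps}{2}$. Since both are defined through a (semi)norm built out of a double integral over $\re^{2N}$, the natural strategy is to show that the two Gagliardo-type seminorms are \emph{equivalent}, i.e. there are constants $0<c\le C$ with
\begin{equation*}
c\iint_{\re^{2N}}\frac{|\phi(x)-\phi(y)|^p}{|x-y|^{N+ps}}\frac{dx\,dy}{|x|^\beta|y|^\beta}\le \iint_{\re^{2N}}\frac{\big||x|^\alpha\phi(x)-|y|^\alpha\phi(y)\big|^p}{|x-y|^{N+ps}}dx\,dy\le C\iint_{\re^{2N}}\frac{|\phi(x)-\phi(y)|^p}{|x-y|^{N+ps}}\frac{dx\,dy}{|x|^\beta|y|^\beta}
\end{equation*}
for all $\phi\in\mathcal C_0^\infty(\ren)$ (recall $\alpha=-2\beta/p$, so that $|x|^{p\alpha}=|x|^{-2\beta}$ and the weights on the $L^p$ norms also match). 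Once the seminorms are equivalent on $\mathcal C_0^\infty(\ren)$ and the $L^p$-weight parts agree, both spaces are the completion of $\mathcal C_0^\infty(\ren)$ with respect to equivalent norms, hence coincide as sets with equivalent norms. The point $-ps<\beta$ is exactly what guarantees $\mathcal C_0^\infty(\ren)\subset W^{s,p}_\beta(\ren)$ (as shown just above in the excerpt), and $\beta<\frac{N-ps}{2}$ keeps $\alpha>-\frac{N-ps}{2p}$... more precisely keeps all the weighted integrals finite and $\Upsilon(\gamma)>0$.

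\textbf{Key steps.} First, I would write $u(x)=|x|^{-\alpha}w(x)$ with $w=|x|^\alpha u$, so that translating between the two seminorms amounts to comparing, pointwise in $(x,y)$, the two kernels
\begin{equation*}
\frac{|w(x)-w(y)|^p}{|x-y|^{N+ps}}\quad\text{and}\quad \frac{\big||x|^{-\alpha}w(x)-|y|^{-\alpha}w(y)\big|^p}{|x-y|^{N+ps}}\,|x|^\beta|y|^\beta .
\end{equation*}
The algebraic identity
\begin{equation*}
|x|^{-\alpha}w(x)-|y|^{-\alpha}w(y)=|x|^{-\alpha}\big(w(x)-w(y)\big)+\big(|x|^{-\alpha}-|y|^{-\alpha}\big)w(y)
\end{equation*}
together with the elementary inequality \eqref{alge1} (from Lemma \ref{algg}) in the form $(a+b)^p\le c_1 a^p+c_2 b^p$ reduces one of the two inequalities to controlling the cross term
\begin{equation*}
\iint_{\re^{2N}}\frac{\big||x|^{-\alpha}-|y|^{-\alpha}\big|^p|w(y)|^p}{|x-y|^{N+ps}}\,dx\,dy
\end{equation*}
(and its symmetric counterpart) by a constant times $\iint |w(y)|^p |y|^{-p\alpha-ps}\,dy$-type quantity, which in turn is controlled by the weighted Hardy inequality \eqref{IngL1} (equivalently by the Hardy-Sobolev framework of Theorem \ref{S-Hardy} after the change of variables, since the exponent $-p\alpha-ps = 2\beta - ps$ is the correct Hardy weight for $W^{s,p}_\beta$). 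Carrying this out in both directions — using the decomposition once with $w=|x|^\alpha u$ and once inverting the roles with $|x|^{-\alpha}$ — gives the two-sided bound. The homogeneity of the weights ($|x|^{-\alpha}$ is homogeneous of degree $-\alpha$) is what makes the singular integral $\int_{\re^N}\big||x|^{-\alpha}-|y|^{-\alpha}\big|^p|x-y|^{-N-ps}\,dx$ reduce, after scaling $x=|y|z$, to $|y|^{-p\alpha-ps}$ times a finite constant — finiteness of that constant is precisely the condition $-ps<\beta<\frac{N-ps}{2}$, i.e. $-\frac{N-ps}{2}<-\alpha p/2<\frac{ps}{?}$... the admissible range for $\alpha$.

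\textbf{Main obstacle.} The delicate point is the finiteness and uniform control of the cross-term singular integral
\begin{equation*}
\mathcal K(y)=\int_{\re^N}\frac{\big||x|^{-\alpha}-|y|^{-\alpha}\big|^p}{|x-y|^{N+ps}}\,dx,
\end{equation*}
especially its behavior as $x\to y$ and as $|x|\to 0$ or $|x|\to\infty$. Near $x=y$ one needs $|\,|x|^{-\alpha}-|y|^{-\alpha}|\lesssim |x-y|$ locally (away from $0$), so that the singularity $|x-y|^{-N-ps+p}$ is integrable because $p>ps$; near the origin the bound $|x|^{-\alpha p}|x-y|^{-N-ps}$ must be integrable, which needs $-\alpha p > -N$, i.e. $\alpha<N/p$, guaranteed by $\beta>-N/2$; and at infinity the decay $|x|^{-\alpha p - N - ps}$ requires $\alpha p + ps>0$. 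One then has to verify $\mathcal K(y)=C_{N,p,s,\alpha}\,|y|^{-\alpha p-ps}$ by scaling and that $C_{N,p,s,\alpha}<\infty$ exactly on the stated $\beta$-range; and finally that after inserting this into the decomposition, the resulting term $C\int |w(y)|^p|y|^{-\alpha p-ps}\,dy$ is absorbed by the Gagliardo seminorm of $w$ via the Hardy inequality — here one must check that the Hardy constant does not need to be beaten (we only need a bound, not an optimal one), so the non-optimality is harmless. This circle of estimates, done symmetrically so that one obtains inequalities in \emph{both} directions, is the technical heart; everything else (the completion argument, the matching of the $L^p$ weight terms, the density of $\mathcal C_0^\infty$) is routine.
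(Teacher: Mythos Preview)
Your overall architecture (show equivalence of Gagliardo seminorms on $\mathcal C_0^\infty$, reduce the cross term to a Hardy-type integral via the scaling computation $\mathcal K(y)=C|y|^{-\alpha p-ps}$, then absorb it by the weighted Hardy inequality \eqref{IngL1}) is exactly the one the paper follows, and your treatment of the cross term is correct. The difficulty you have not confronted is the \emph{main} term.

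If you set $w=|x|^\alpha u$ and use only the subadditive bound $(a+b)^p\le c_1a^p+c_2b^p$ on
\[
w(x)-w(y)=|x|^\alpha\big(u(x)-u(y)\big)+\big(|x|^\alpha-|y|^\alpha\big)u(y),
\]
the main contribution after integrating is
\[
c_1\iint_{\re^{2N}}\frac{|x|^{\alpha p}|u(x)-u(y)|^p}{|x-y|^{N+ps}}\,dxdy
=c_1\iint_{\re^{2N}}\frac{|u(x)-u(y)|^p}{|x-y|^{N+ps}}\,|x|^{-2\beta}\,dxdy,
\]
and you need this to be dominated by the $W^{s,p}_\beta$ seminorm, which carries the \emph{symmetric} weight $|x|^{-\beta}|y|^{-\beta}$. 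But $|x|^{-2\beta}\le C\,|x|^{-\beta}|y|^{-\beta}$ is false (for either sign of $\beta$), and symmetrizing only gives $\tfrac12(|x|^{-2\beta}+|y|^{-2\beta})\ge |x|^{-\beta}|y|^{-\beta}$, which points the wrong way. The same obstruction appears in the reverse direction: starting from $u=|x|^{-\alpha}w$ you obtain, after symmetrizing, the weight $\tfrac12\big((|x|/|y|)^{\beta}+(|y|/|x|)^{\beta}\big)\ge 1$ in front of $|w(x)-w(y)|^p$, again the wrong inequality. So the ``elementary inequality \eqref{alge1}'' alone does not close either direction.

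The paper resolves this by arguing from below rather than from above. It writes the weighted integrand in the two symmetric forms
\[
f_1(x,y)=\Big(\tfrac{w(x)}{w(y)}\Big)^{p/2}\frac{\big|(v(x)-v(y))-v(y)w(y)\big(\tfrac{1}{w(x)}-\tfrac{1}{w(y)}\big)\big|^p}{|x-y|^{N+ps}},\qquad f_2(x,y)=f_1(y,x),
\]
with $v=|x|^\alpha u$, applies the convexity lower bound $|a-b|^p\ge |a|^p-p|a|^{p-2}\langle a,b\rangle+C(p)|b|^p$ (then Young) to get
$f_i\ge (\cdot)^{p/2}\big(C_1|v(x)-v(y)|^p/|x-y|^{N+ps}-C_2\,g_i\big)$, and \emph{then} uses
$\big(\tfrac{w(x)}{w(y)}\big)^{p/2}+\big(\tfrac{w(y)}{w(x)}\big)^{p/2}\ge 1$ in the favourable direction. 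This is the missing idea: a \emph{lower} bound on $|a-b|^p$, not an upper bound, so that the AM--GM step lands on the correct side. Once that is in place, the cross terms $g_i$ are handled precisely by your radial computation and the Hardy inequalities, as you outlined.
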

\begin{proof}
To prove the main result we have just to show the existence of $C_1,C_2>0$ such that for all $u\in \mathcal{C}^\infty_0(\ren)$, we have
\begin{equation}\label{spa}
C_1\|u\|_{E_\a(\ren)}\le \|u\|_{W^{s,p}_\beta(\ren)}\le C_2\|u\|_{E_\a(\ren)}.
\end{equation}
Let us begin by proving the first inequality. In this case the proof follows using closely the computations in \cite{AB}. For the reader convenience we include here all details. In what follows, we denote by $C_1, C_2,... $ any positive constants that are independent of $u$ and can change from one line to another.

Define
\begin{equation}\label{vvv}
w(x)=|x|^{-\alpha}=|x|^{\frac{2\b}{p}}, v(x)= \dfrac{u(x)}{w(x)},
\end{equation}
then
$$
\dfrac{|u(x)-u(y)|^p}{|x-y|^{N+ps}|x|^\beta|y|^\beta}=\dfrac{|u(x)-u(y)|^p}{|x-y|^{N+ps}}\, \dfrac{1}{(w(x)w(y))^{\frac{p}{2}}}
$$
and
$$
\begin{array}{rcl}
\dfrac{|u(x)-u(y)|^p}{|x-y|^{N+ps}}\, \dfrac{1}{(w(x)w(y))^{\frac{p}{2}}} &= & \dfrac{\big|(v(x)-v(y))-v(y)w(y)(\dfrac{1}{w(x)}-\dfrac{1}{w(y)})\big|^p}{|x-y|^{N+ps}}\left(\dfrac{w(x)}{w(y)}\right)^{\frac{p}{2}}\\
&\equiv & f_{1}(x,y).
\end{array}
$$
In the same way we have
$$
\begin{array}{rcl}
\dfrac{|u(x)-u(y)|^p}{|x-y|^{N+ps}}\, \dfrac{1}{(w(x)w(y))^{\frac{p}{2}}} & = & \dfrac{\big|(v(y)-v(x))-v(x)w(x)(\dfrac{1}{w(y)}-\dfrac{1}{w(x)})\big|^p}{|x-y|^{N+ps}}\left(\dfrac{w(y)}{w(x)}\right)^{\frac{p}{2}}\\
&\equiv &  f_2(x,y).
\end{array}
$$
It is clear that
$$\dyle\iint_{\re^{2N}}f_1(x,y)\,dx\,dy=\iint_{\re^{2N}}f_2(x,y)\,dx\,dy,$$
and
	$$\iint_{\re^{2N}}\dfrac{|u(x)-u(y)|^p}{|x-y|^{N+ps}}\dfrac{dx}{|x|^\b}\dfrac{dy}{|x|^\b}=
	\frac 12\iint_{\re^{2N}}f_1(x,y)\,dx\,dy+\frac 12\iint_{\re^{2N}}f_2(x,y)\,dx\,dy.$$
Since
\begin{eqnarray*}
&f_{1}(x,y)\geq
\left(\dfrac{w(x)}{w(y)}\right)^{\frac{p}{2}} \times\\
& \Big[ \dfrac{|v(x)-v(y)|^p}{|x-y|^{N+ps}}-p
\dfrac{|v(x)-v(y)|^{p-2}}{|x-y|^{N+ps}}\big\langle
v(x)-v(y),
w(y)v(y)(\dfrac{1}{w(x)}-\dfrac{1}{w(y)})\big\rangle\\
&+C(p)\dfrac{|w(y)v(y)(\dfrac{1}{w(x)}-\dfrac{1}{w(y)})
|^p}{|x-y|^{N+ps}}\Big],
\end{eqnarray*}
using Young inequality, it holds that
\begin{eqnarray*}
&f_{1}(x,y)\geq \left(\dfrac{w(x)}{w(y)}\right)^{\frac{p}{2}} \times\\
& \Big[ C_1\dfrac{|v(x)-v(y)|^p}{|x-y|^{N+ps}}
-C_2\dfrac{|w(y)v(y)(\dfrac{1}{w(x)}-\dfrac{1}{w(y)})|^{p}}{|x-y|^{N+ps}}|\Big]
\end{eqnarray*}
with $C_1,C_2>0$ independent of $u$. In a symmetric way, we reach that 	
	\begin{eqnarray*}
		&f_{2}(x,y)\geq \left(\dfrac{w(y)}{w(x)}\right)^{\frac{p}{2}} \times\\
		& \Big[ C_1\dfrac{|v(x)-v(y)|^p}{|x-y|^{N+ps}}
		-C_2\dfrac{|w(x)v(x)(\dfrac{1}{w(y)}-\dfrac{1}{w(x)})|^{p}}{|x-y|^{N+ps}}|\Big].
	\end{eqnarray*}
	
	Thus we get the existence of positive constants $C_1, C_2, {C_3}$
	such that
	\begin{eqnarray*}
		&\dyle \iint_{\re^{2N}}
		\dfrac{|u(x)-u(y)|^p}{|x-y|^{N+ps}}\dfrac{dx}{|x|^{\beta}}
		\dfrac{dy}{|y|^\beta}\ge {C_1}\iint_{\re^{2N}}
		\dfrac{|v(x)-v(y)|^p}{|x-y|^{N+ps}}\Big[\left(\dfrac{w(y)}{w(x)}\right)^{\frac{p}{2}}
		+\left(\dfrac{w(x)}{w(y)}\right)^{\frac{p}{2}}\Big]dxdy\\
		&-\dyle {C_2}\iint_{\re^{2N}}\left(\dfrac{w(x)}{w(y)}\right)^{\frac{p}{2}}\dfrac{|w(y)v(y)(\dfrac{1}{w(x)}-\dfrac{1}{w(y)})|^{p}}{|x-y|^{N+ps}}|dxdy\\
		&-\dyle{C_3}\iint_{\re^{2N}}\left(\dfrac{w(y)}{w(x)}\right)^{\frac{p}{2}}\dfrac{|w(x)v(x)(\dfrac{1}{w(y)}-\dfrac{1}{w(x)})|^{p}}{|x-y|^{N+ps}}dx\,dy.
	\end{eqnarray*}
	Since $
	\Big[\left(\dfrac{w(y)}{w(x)}\right)^{\frac{p}{2}}
	+\left(\dfrac{w(x)}{w(y)}\right)^{\frac{p}{2}}\Big]\ge 1,
	$
	we get
	\begin{equation}\label{new4}
	\begin{array}{lll}
	&\dyle \iint_{\re^{2N}}
	\dfrac{|v(x)-v(y)|^p}{|x-y|^{N+ps}}dxdy\le
	C_1\iint_{\re^{2N}}
	\dfrac{|u(x)-u(y)|^p}{|x-y|^{N+ps}}\dfrac{dx}{|x|^{\beta}}
	\dfrac{dy}{|y|^\beta}\\
	&+\dyle C_2\iint_{\re^{2N}}\left(\dfrac{w(x)}{w(y)}\right)^{\frac{p}{2}}\dfrac{|w(y)v(y)(\dfrac{1}{w(x)}-\dfrac{1}{w(y)})|^{p}}{|x-y|^{N+ps}}|dxdy\\
	&+C_3\dyle\iint_{\re^{2N}}\left(\dfrac{w(y)}{w(x)}\right)^{\frac{p}{2}}\dfrac{|w(x)v(x)(\dfrac{1}{w(y)}-\dfrac{1}{w(x)})|^{p}}{|x-y|^{N+ps}}dxdy.
	\end{array}
	\end{equation}
Define
$$
g_1(x,y)=\left(\dfrac{w(y)}{w(x)}\right)^{\frac{p}{2}}\dfrac{|w(x)v(x)(\dfrac{1}{w(y)}-\dfrac{1}{w(x)})|^{p}}{|x-y|^{N+ps}}
$$
and
$$
g_2(x,y)=\left(\dfrac{w(x)}{w(y)}\right)^{\frac{p}{2}}\dfrac{|w(y)v(y)(\dfrac{1}{w(x)}-\dfrac{1}{w(y)})|^{p}}{|x-y|^{N+ps}},
$$
then
$ \dyle \iint_{\re^{2N}} g_1(x,y)dxdy=\iint_{\re^{2N}} g_2(x,y)dxdy.$ Hence we have just to estimate the first integral. Taking into consideration the definition of $v$ and $w$ given in \eqref{vvv}, it holds
$$
g_1(x,y)=\dfrac{|u(x)|^p\Big||x|^{\frac{2\beta}{p}}-|y|^{\frac{2\beta}{p}}\Big|^p}{|x|^{3\beta}|y|^{\beta}|x-y|^{N+ps}}.
$$
Therefore, we get
\begin{eqnarray*}
I\equiv \iint_{\re^{2N}} g_1(x,y)dxdy &=&
\dint\limits_{\re^N}\dfrac{|u(x)|^p}{|x|^{3\beta}}
\Big(\dint\limits_{\re^N}\dfrac{\Big||x|^{\frac{2\beta}{p}}-|y|^{\frac{2\beta}{p}}\Big|^p}{|y|^{\beta}|x-y|^{N+ps}}dy\Big)dx.
\end{eqnarray*}
Now, we follow closely the radial computations as \cite{FV} and \cite{G}. We set $r=|x|$ and $\rho=|y|$, then $x=rx', y=\rho y'$ with $|x'|=|y'|=1$, thus
	\begin{eqnarray*}
I=\dint\limits_{\re^N}\dfrac{|u(x)|^p}{|x|^{3\beta}} \Big[
\dint\limits_0^{+\infty}\dfrac{|r^{\frac{2\beta}{p}}-\rho^{\frac{2\beta}{p}}|^p\rho^{N-1}}{\rho^{\beta}}\left(
\dint\limits_{|y'|=1}\dfrac{dH^{n-1}(y')}{|r x'-\rho y'|^{N+ps}}
\right) \,d\rho\Big]dx.
\end{eqnarray*}
Let $\s=\frac{\rho}{r}$, hence
\begin{eqnarray*}
I=\dint\limits_{\re^N}\dfrac{|u(x)|^p}{|x|^{2\beta+ps}}
\Big[
\dint\limits_0^{+\infty}|1-\s^{\frac{2\beta}{p}}|^p\s^{N-1-\beta}K(\s)\,d\s\Big]dx.
\end{eqnarray*}
Notice that $K(\frac{1}{\xi})=\xi^{N+ps}K(\xi)$ for $\xi>0$, thus
$$\dint\limits_0^{+\infty}|1-\s^{\frac{2\beta}{p}}|^p\s^{N-1-\beta}K(\s)\,d\s=
\dint_1^\infty(\s^{\frac{2\beta}{p}}-1)^p(\s^{N-1-\beta}+\s^{ps-1-\beta})K(\s)\,d\s
$$

Taking into consideration the behavior of $K$ near $1$ and $\infty$, we conclude that  $$\dint\limits_0^{+\infty}|1-\s^{\frac{2\beta}{p}}|^p\s^{N-1-\beta}K(\s)\,d\s= C_3\in (0, \infty).$$ Thus
	$$
	\iint_{\re^{2N}}
	g_1(x,y)dxdy=C_3\dint\limits_{\re^N}
	\dfrac{|u(x)|^p}{|x|^{2\beta+ps}}dx.
	$$
	Now, using the fractional weighted Hardy inequality given in \eqref{IngL1}, we reach that
	\begin{equation}\label{new3}
	\iint_{\re^{2N}} g_1(x,y)dxdy \leq C_4
	\iint_{\re^{2N}}
	\dfrac{|u(x)-u(y)|^p}{|x-y|^{N+ps}}\dfrac{dx}{|x|^{\beta}}
	\dfrac{dy}{|y|^\beta}.
	\end{equation}
	Combining  \eqref{new3} and
	\eqref{new4}, it holds
	\begin{equation}
	\iint_{\re^{2N}}
	\dfrac{|v(x)-v(y)|^p}{|x-y|^{N+ps}}dxdy\le
	C_1\iint_{\re^{2N}}
	\dfrac{|u(x)-u(y)|^p}{|x-y|^{N+ps}}\dfrac{dx}{|x|^{\beta}}
	\dfrac{dy}{|y|^\beta}
	\end{equation}
and then $\tilde{C}\|u\|_{E_\a(\ren)}\le \|u\|_{W^{s,p}_\beta(\ren)}$ with $\tilde{C}>0$.

\

We deal now with the second inequality in \eqref{spa}.
Notice that
\begin{eqnarray*}
&\dfrac{|v(x)-v(y)|^p}{|x-y|^{N+ps}}=\frac{1}{2} \dfrac{\big|(u(y)-u(x))-\dfrac{u(x)}{w(x)}(w(y)-w(x))\big|^p}{|x-y|^{N+ps}}\Big[\dfrac{1}{(w(x))^{p}}+\dfrac{1}{(w(y))^{p}}\Big]\\
& \ge C_1\dfrac{|u(x)-u(y)|^p}{|x-y|^{N+ps}}
\Big[\dfrac{1}{(w(x))^{p}}+\dfrac{1}{(w(y))^{p}}\Big]\\
&-{C_2}\left(\dfrac{1}{w(y)}\right)^{p} \dfrac{|w(y)-w(x)|^{p}}{|x-y|^{N+ps}}|\dfrac{u(x)}{w(x)}|^p-{C_3}\left(\dfrac{1}{w(x)}\right)^{p} \dfrac{|w(x)-w(y)|^{p}}{|x-y|^{N+ps}}|\dfrac{u(y)}{w(y)}|^p.
\end{eqnarray*}
Since
$$\Big[\dfrac{1}{(w(x))^{p}}+\dfrac{1}{(w(y))^{p}}\Big]\ge \dfrac{1}{(w(x))^{\frac p2}}\dfrac{1}{(w(y))^{\frac p2}}\equiv \dfrac{1}{|x|^{\b}|y|^{\b}},$$
then
\begin{eqnarray*}
&\dfrac{|v(x)-v(y)|^p}{|x-y|^{N+ps}}\ge C_1\dfrac{|u(x)-u(y)|^p}{|x-y|^{N+ps}|x|^{\b}|y|^{\b}}\\
&-{C_2}\left(\dfrac{1}{w(y)}\right)^{p} \dfrac{|w(y)-w(x)|^{p}}{|x-y|^{N+ps}}|\dfrac{u(x)}{w(x)}|^p-{C_3}\left(\dfrac{1}{w(x)}\right)^{p} \dfrac{|w(x)-w(y)|^{p}}{|x-y|^{N+ps}}|\dfrac{u(y)}{w(y)}|^p.
\end{eqnarray*}
Therefore we conclude that
\begin{equation}\label{RR}
\begin{array}{lll}
& C_1\dyle \iint_{\re^{2N}}\dfrac{|u(x)-u(y)|^p\:dxdy}{|x-y|^{N+ps}|x|^{\b}|y|^{\b}}\le \dint_{\re^N}\dint_{\re^N} \dfrac{|v(x)-v(y)|^p}{|x-y|^{N+ps}}dxdy\\ \\
&+{C_2}\dyle \iint_{\re^{2N}}\left(\dfrac{1}{w(y)}\right)^{p} \dfrac{|w(y)-w(x)|^{p}}{|x-y|^{N+ps}}|\dfrac{u(x)}{w(x)}|^pdxdy\\ \\ &+\dyle {C_3}\iint_{\re^{2N}}\left(\dfrac{1}{w(x)}\right)^{p} \dfrac{|w(x)-w(y)|^{p}}{|x-y|^{N+ps}}|\dfrac{u(y)}{w(y)}|^pdxdy.
\end{array}
\end{equation}
As in the first case, setting
$$
\tilde{g}_1(x,y)=\left(\dfrac{1}{w(y)}\right)^{p}|\dfrac{u(x)}{w(x)}|^p \dfrac{|w(y)-w(x)|^{p}}{|x-y|^{N+ps}}
$$
and
$$
\tilde{g}_2(x,y)=\left(\dfrac{1}{w(x)}\right)^{p}|\dfrac{u(y)}{w(y)}|^p \dfrac{|w(x)-w(y)|^{p}}{|x-y|^{N+ps}},$$
it holds that  $\dyle\iint_{\re^{2N}} \tilde{g}_1(x,y)dxdy=\iint_{\re^{2N}} \tilde{g}_2(x,y)dxdy$
and
\begin{eqnarray*}
\iint_{\re^{2N}} \tilde{g}_1(x,y)dxdy &=&\dint\limits_{\re^N}\dfrac{|u(x)|^p}{|x|^{2\beta}}
			\Big(\dint\limits_{\re^N}\dfrac{\Big||x|^{\frac{2\beta}{p}}-|y|^{\frac{2\beta}{p}}\Big|^p}{|y|^{2\beta}|x-y|^{N+ps}}dy\Big)dx.
		\end{eqnarray*}
Thus, as in the first case,
\begin{eqnarray*}
\iint_{\re^{2N}} \tilde{g}_1(x,y)dxdy =C_4\dint\limits_{\re^N}\dfrac{|u(x)|^p}{|x|^{2\beta+ps}}dx=C_4\dint\limits_{\re^N}\dfrac{|v(x)|^p}{|x|^{ps}}dx
\end{eqnarray*}
where $C_4=\dint_0^{+\infty}|1-\s^{\frac{2\beta}{p}}|^p\s^{N-1-2\beta}K(\s)\,d\s=C<\infty.$
Now, using fractional Hardy inequality in Theorem \ref{S-Hardy}, for $v$, we get
		\begin{equation}\label{new03}
		\iint_{\re^{2N}} g_1(x,y)dxdy \leq C_5
		\iint_{\re^{2N}}
		\dfrac{|v(x)-v(y)|^p}{|x-y|^{N+ps}}dxdy.
		\end{equation}
Combining  \eqref{RR} and \eqref{new03}, we reach that
$$\iint_{\re^{2N}}\dfrac{|v(x)-v(y)|^p}{|x-y|^{N+ps}}dxdy\ge
C\iint_{\re^{2N}}
\dfrac{|u(x)-u(y)|^p}{|x-y|^{N+ps}}\dfrac{dx}{|x|^{\beta}}
\dfrac{dy}{|y|^\beta}.
$$
Hence we conclude.
\end{proof}

\begin{remark}
For $\O\subset \ren$, a bounded regular domain, we define the space $E_{\a,0}(\O)$ as the completion of $\mathcal{C}^\infty_0(\O)$ with respect to the norm
$$
||\phi||_{E_{\a,0}(\O)}=\bigg(\iint_{D_\O}\dfrac{||x|^\a \phi(x)-|y|^\a \phi(y)|^p}{|x-y|^{N+ps}}dxdy\bigg)^{\frac{1}{p}}.
$$
Following the same computations as above we can prove that if $-ps<\beta<\frac{N-ps}{2}$, then $W^{s,p}_{\b,0}(\O)=E_{\a,0}(\O)$ with $\a=-\dfrac{2\b}{p}$.
\end{remark}
\section{Existence Results: $\l\le \L_{N,p,s}$ }\label{sec3}
Recall that we are considering nonnegative solution to problem
\begin{equation}\label{eq:defsec1}
\left\{
\begin{array}{rcll}
u_t+(-\D^s_{p}) u&=&\dyle \l \frac{u^{p-1}}{|x|^{ps}}  &
\text{ in } \O_{T}=\Omega \times (0,T)  , \\
u&\ge& 0 &
\text{ in }\ren\times (0,T), \\
u&=&0 & \text{ in }(\ren\setminus\O) \times (0,T), \\
u(x,0)&=&u_0(x)& \mbox{  in  }\O,
\end{array}%
\right.
\end{equation}
where $\l\le \L_{N,p,s}$ and $u_0\in L^2(\O)$ with $u_0\gneqq 0$. Define $u_{0n}=T_n(u_0)$, starting with $u_{00}\equiv 0$, for $n\ge 1$, we consider $u_n$ as the unique nonnegative solution to the following approximated problem
\begin{equation}\label{pro:lineal1}
\left\{\begin{array}{rcll}
u_{nt}+(-\D^s_{p})u_n &=&\l\dfrac{u^{p-1}_{n-1}}{|x|^{ps}+\frac{1}{n}}& \mbox{  in  }\O_T,\\
u_n&=&0 &\mbox{  in
 } (\ren\backslash\O)\times (0,T),\\
u_n(x,0)&=&u_{0n}(x) & \mbox{ in }\O.
\end{array}
\right.
\end{equation}
The existence of $u_n$ follows using classical arguments for monotone operator as in \cite{Lio}. It is clear that $u_n\gneqq 0$ and $\{u_n\}_n$ is monotone in $n$. As a consequence we get the first existence result.
\begin{Theorem}
Assume that $\l<\L_{N,p,s}$ and $u_0 \in L^2(\O)$, then the problem \eqref{eq:defsec1} has global solution $u$ such that $u\in L^{p}(0,T; W^{s,p}_0(\O))\cap \mathcal{C}([0,T], L^p(\O))$, $u_t\in L^{p'}(0,T; W^{-s,p'}_0(\O))$ and $u(x,.)\to u_0$ strongly in $L^2(\O)$ as $t\to 0$.
\end{Theorem}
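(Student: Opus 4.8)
The strategy is to derive uniform estimates for the monotone approximating sequence $\{u_n\}$ built in \eqref{pro:lineal1} and then pass to the limit. Since $\{u_n\}_n$ is nondecreasing, it has a pointwise limit $u=\limn u_n\ge 0$, and it is enough to show that $u$ is an energy solution in the sense of Definition \ref{energy} with the stated regularity. Throughout, write $[w]^p:=\iint_{D_\O}|w(x)-w(y)|^p\,d\nu$, so that $\langle(-\D^s_p)w,w\rangle=\frac12[w]^p$ for $w\in W^{s,p}_0(\O)$.

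\emph{Step 1 (energy estimate).} Using $u_n$ as a test function in \eqref{pro:lineal1} on $\O\times(0,t)$ yields
\[
\frac12\|u_n(t)\|_{L^2(\O)}^2+\frac12\int_0^t[u_n(\tau)]^p\,d\tau=\frac12\|u_{0n}\|_{L^2(\O)}^2+\l\int_0^t\int_\O\frac{u_{n-1}^{p-1}u_n}{|x|^{ps}+\frac1n}\,dx\,d\tau .
\]
Since $0\le u_{n-1}\le u_n$, $|x|^{ps}+\frac1n\ge|x|^{ps}$ and $u_{n-1}^{p-1}u_n\le u_n^p$ by Young's inequality, the last integral is $\le\l\int_0^t\int_\O\frac{u_n^p}{|x|^{ps}}\,dx\,d\tau$; applying the fractional Hardy inequality \eqref{hardy} to $u_n(\cdot,\tau)$ (extended by $0$ outside $\O$) this is $\le\frac{\l}{2\L_{N,p,s}}\int_0^t[u_n(\tau)]^p\,d\tau$. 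Absorbing this term and using $\l<\L_{N,p,s}$ we obtain, uniformly in $n$ and in $T$,
\[
\frac12\|u_n(t)\|_{L^2(\O)}^2+\frac12\Big(1-\frac{\l}{\L_{N,p,s}}\Big)\int_0^t[u_n(\tau)]^p\,d\tau\le\frac12\|u_0\|_{L^2(\O)}^2 ,
\]
so $\{u_n\}$ is bounded in $L^\infty(0,T;L^2(\O))\cap L^p(0,T;W^{s,p}_0(\O))$. Testing \eqref{pro:lineal1} against an arbitrary $v\in L^p(0,T;W^{s,p}_0(\O))$, estimating the operator term by Hölder's inequality in $d\nu$ and the reaction term by $\big(\int_\O\frac{u_n^p}{|x|^{ps}}\big)^{1/p'}\big(\int_\O\frac{|v|^p}{|x|^{ps}}\big)^{1/p}$ together with \eqref{hardy}, one gets $\|u_{nt}\|_{L^{p'}(0,T;W^{-s,p'}_0(\O))}\le C\big(\int_0^T[u_n]^p\big)^{1/p'}$, again uniformly bounded.

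\emph{Step 2 (passage to the limit).} By monotonicity $u_n\uparrow u$ a.e. in $\O_T$, hence $U_n:=|u_n(x,t)-u_n(y,t)|^{p-2}(u_n(x,t)-u_n(y,t))\to U$ a.e. on $D_\O\times(0,T)$, with $U$ as in Definition \ref{energy}. The bounds of Step 1 give $u_n\weakly u$ in $L^p(0,T;W^{s,p}_0(\O))$, $u_{nt}\weakly u_t$ in $L^{p'}(0,T;W^{-s,p'}_0(\O))$, $u\in L^\infty(0,T;L^2(\O))$, and $\{U_n\}$ bounded in $L^{p'}(D_\O\times(0,T),d\nu\,dt)$; being also a.e. convergent, $U_n\weakly U$ in that space, so the operator term of the weak formulation passes to the limit against any fixed test function. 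For the reaction term, $\frac{u_{n-1}^{p-1}}{|x|^{ps}+\frac1n}$ increases to $\frac{u^{p-1}}{|x|^{ps}}$, and the estimate of Step 1 combined with Hölder and \eqref{hardy} shows $\frac{u^{p-1}}{|x|^{ps}}|v|\in L^1(\O_T)$ for every $v\in L^p(0,T;W^{s,p}_0(\O))$, so the monotone convergence theorem (applied to $v^\pm$) lets us pass to the limit there. Hence $u$ satisfies the identity of Definition \ref{energy}, and $u\in\mathcal{C}([0,T],L^p(\O))$ follows from $u\in L^p(0,T;W^{s,p}_0(\O))$ and $u_t\in L^{p'}(0,T;W^{-s,p'}_0(\O))$ by the standard embedding for evolution equations (cf. \cite{Lio}).

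\emph{Step 3 (initial datum and main difficulty).} Passing to the limit in the inequality of Step 1 gives $\limsup_{t\to0^+}\|u(t)\|_{L^2(\O)}\le\|u_0\|_{L^2(\O)}$; passing to the limit in the weak formulation tested against functions vanishing at $t=T$, and using $u_{0n}=T_n(u_0)\to u_0$ in $L^2(\O)$, gives $u(t)\weakly u_0$ in $L^2(\O)$ as $t\to0^+$; the two facts combined force $u(\cdot,t)\to u_0$ strongly in $L^2(\O)$, which completes the proof. The heart of the argument is the energy estimate of Step 1: it is the \emph{strict} inequality $\l<\L_{N,p,s}$ that renders the $p$-energy coercive once the singular reaction term is absorbed through \eqref{hardy}, while the borderline value $\l=\L_{N,p,s}$ is not reached by this scheme and requires instead the improved Hardy--Sobolev inequality of Theorem \ref{impro}. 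Once the a priori bounds are in hand, identifying the weak limits of the nonlinear terms is routine, since the monotone convergence $u_n\uparrow u$ provides pointwise a.e. convergence for free.
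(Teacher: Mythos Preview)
Your proposal is correct and follows essentially the same approach as the paper: test the approximating problem \eqref{pro:lineal1} with $u_n$, absorb the reaction term via the Hardy inequality \eqref{hardy} using $\l<\L_{N,p,s}$, deduce uniform bounds in $L^\infty(0,T;L^2(\O))\cap L^p(0,T;W^{s,p}_0(\O))$, and pass to the limit by monotonicity and compactness. The paper's own proof is terse (it ends with ``the rest of the proof follows by using classical compactness arguments''), whereas you spell out the identification of the weak limit of $U_n$ and the recovery of the initial datum; but the strategy is identical.
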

\begin{proof}
Using  $u_n$ as a test function in \eqref{pro:lineal1}, we get
\begin{equation*}
\frac{1}{2}\frac{d}{dt}\int_{\Omega }u^{2}_ndx+ \frac{1}{2}\iint_{D_{\O}}\dfrac{|u_n(x,t)-u_n(y,t)|^{p}}{|x-y|^{N+ps}}dy \ dx\le \l\io\dfrac{|u_n|^{p}}{|x|^{ps}}dx.
\end{equation*}
Using the Hardy-Sobolev inequality we obtain
$$\frac{1}{2}\int_{\Omega }u^{2}_n(x,T)dx+ \Big(\frac{1}{2}-\frac{\l}{\L_{N,p,s}}\Big)\int_0^T\iint_{D_{\O}}\dfrac{|u_n(x,t)-u_n(y,t)|^{p}}{|x-y|^{N+ps}}dy \ dx\,dt\le \frac{1}{2}||u_0||^2_2.$$
Hence we get the existence of a measurable function $u$ such that $u_n\uparrow u$ a.e in $\O_T$, $u_n \rightharpoonup u$ weakly in $L^p(0,T;W^{s,p}_0(\O))$ and
$\dfrac{|u_n|^{p}}{|x|^{ps}}\to \dfrac{|u|^{p}}{|x|^{ps}}$ strongly in $L^1(\O_T)$. Now the rest of the proof follows by using classical compactness arguments.
\end{proof}

In the case where $\l=\L_{N,p,s}$ we can use the improved Hardy-Sobolev inequality given in \eqref{impro}, in this case we can prove the next Theorem.
\begin{Theorem}
Assume that $\l=\L_{N,p,s}$ and $u_0 \in L^2(\O)$, then the problem \eqref{eq:defsec1} has a global solution $u$ such that
$$
\frac{1}{2}\int_0^T\iint_{D_{\O}}\dfrac{|u(x,t)-u(y,t)|^{p}}{|x-y|^{N+ps}}dx \ dy dt-\L_{N,p,s}\iint_{\O_T}\dfrac{|u|^{p}}{|x|^{ps}}dxdt\le \frac{1}{2}||u_0||^2_2.
$$
Moreover, $u\in L^{q}(0,T; W^{s,q}_0(\O))\cap \mathcal{C}([0,T], L^2(\O))$ for all $q<p$.
\end{Theorem}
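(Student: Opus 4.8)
The plan is to run the monotone approximation scheme \eqref{pro:lineal1} that is already in place, with $u_{n}\uparrow u$, and to replace the use of the Hardy--Sobolev inequality made in the previous theorem by the improved inequality of Theorem~\ref{impro}. Using $u_{n}$ as test function in \eqref{pro:lineal1} and exploiting that $u_{n-1}\le u_{n}$ and $(|x|^{ps}+\tfrac1n)^{-1}\le |x|^{-ps}$, one gets
\begin{equation*}
\frac12\frac{d}{dt}\int_\Omega u_{n}^{2}\,dx+\Bigl(\frac12\iint_{D_\Omega}\frac{|u_{n}(x,t)-u_{n}(y,t)|^{p}}{|x-y|^{N+ps}}\,dx\,dy-\Lambda_{N,p,s}\int_\Omega\frac{u_{n}^{p}}{|x|^{ps}}\,dx\Bigr)\le 0 .
\end{equation*}
Integrating in $t$ and using $\|u_{0n}\|_{L^{2}(\Omega)}\le\|u_{0}\|_{L^{2}(\Omega)}$ already gives the displayed estimate with $u_{n}$ in place of $u$; moreover, by \eqref{hardy} the parenthesis is nonnegative, and bounding it from below by \eqref{sara} with an arbitrary $r\in(1,p)$ gives, uniformly in $n$,
\begin{equation*}
\frac12\|u_{n}(\cdot,T)\|_{L^{2}(\Omega)}^{2}+C_{r}\int_{0}^{T}\!\!\iint_{\Omega\times\Omega}\frac{|u_{n}(x,t)-u_{n}(y,t)|^{p}}{|x-y|^{N+rs}}\,dx\,dy\,dt\le \frac12\|u_{0}\|_{L^{2}(\Omega)}^{2}.
\end{equation*}

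I would then pass to the limit. Monotonicity gives $u_{n}\uparrow u$ a.e. in $\Omega_{T}$; Fatou's lemma and monotone convergence applied to the two estimates above produce the announced inequality for $u$, the bound $u\in L^{\infty}(0,T;L^{2}(\Omega))$, and $u\in L^{p}\bigl(0,T;W^{rs/p,p}_{0}(\Omega)\bigr)$ for every $r\in(1,p)$. The reaction terms $\Lambda_{N,p,s}\,u_{n-1}^{p-1}(|x|^{ps}+\tfrac1n)^{-1}$ increase to $\Lambda_{N,p,s}\,u^{p-1}|x|^{-ps}$, hence converge in $L^{1}(\Omega_{T})$ by monotone convergence; combined with the equi-integrability coming from the bounds above, one identifies $u$ as a solution of \eqref{eq:defsec1} in the weak (limit of approximations) sense appropriate here, and the $L^{2}$--energy identity together with the equation gives $u\in\mathcal C([0,T],L^{2}(\Omega))$.

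The delicate point is the regularity $u\in L^{q}(0,T;W^{s,q}_{0}(\Omega))$ for every $q<p$: at $\lambda=\Lambda_{N,p,s}$ the full Gagliardo seminorm of order $s$ is genuinely not bounded uniformly in $n$, and the bounds above only control a seminorm of order $rs/p<s$, so no direct Hölder argument upgrades the order to $s$. Here I would argue as in \cite{DGP}: away from the origin the equation is uniformly nondegenerate and standard fractional regularity gives full $W^{s,p}$--control there, while near the origin I would test \eqref{pro:lineal1} against a suitable power, respectively a truncation, of $u_{n}$ and use the algebraic inequalities of Lemma~\ref{algg} (in particular \eqref{alge3}) and Lemma~\ref{real11}, which substitute for the missing integration by parts, to obtain a uniform bound on $\iint_{\Omega\times\Omega}|x-y|^{-(N+qs)}|u_{n}(x,t)-u_{n}(y,t)|^{q}$, combined with the weighted Sobolev-type inequalities of Subsection~\ref{degenerate}; passing to the limit as before then yields the claim. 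The recurring obstacle throughout is that, lacking a uniform energy bound, every limit passage in the nonlocal operator must be driven by the monotonicity of $\{u_{n}\}$ and by the sub-$s$ regularity rather than by compactness in $W^{s,p}_{0}(\Omega)$.
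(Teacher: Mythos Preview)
Your main line of argument---running the monotone scheme \eqref{pro:lineal1}, testing with $u_{n}$, and replacing the plain Hardy inequality by the improved one of Theorem~\ref{impro}---is exactly the route indicated by the paper. Note, however, that the paper does not actually write out a proof of this theorem: it only states that at $\lambda=\Lambda_{N,p,s}$ one should appeal to \eqref{sara}, and then records the statement. So for the displayed energy inequality and for $u\in L^{\infty}(0,T;L^{2}(\Omega))$ your argument and the paper's (implicit) one coincide.

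Where your discussion goes beyond the paper is precisely the point you flag as delicate, and your diagnosis there is correct: inequality \eqref{sara} controls
\[
\int_{0}^{T}\iint_{\Omega\times\Omega}\frac{|u_{n}(x,t)-u_{n}(y,t)|^{p}}{|x-y|^{N+rs}}\,dx\,dy\,dt,
\]
i.e.\ the $W^{rs/p,p}$--type seminorm for $r<p$, and no H\"older argument upgrades this to a $W^{s,q}$ seminorm (the residual kernel in any such attempt has exponent $N+qs(p-r)/(p-q)>N$, hence is not integrable near the diagonal). The paper is silent on how the conclusion $u\in L^{q}(0,T;W^{s,q}_{0}(\Omega))$ is obtained, so there is nothing to compare against here. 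That said, your proposed remedy---localising away from the origin, testing with powers/truncations via Lemmas~\ref{algg} and~\ref{real11}, and invoking the weighted spaces of Subsection~\ref{degenerate}---is too schematic to be convincing as written: the weighted machinery of that subsection is designed for the supercritical regime $\lambda>\Lambda_{N,p,s}$ with \emph{degenerate} weights $|x|^{\alpha}$, $\alpha>0$, and it is not clear how it yields a genuine $W^{s,q}$ bound at the critical value. If you want to substantiate this part, you should either exhibit an explicit chain of inequalities producing a uniform bound on $\iint_{\Omega\times\Omega}|u_{n}(x,t)-u_{n}(y,t)|^{q}|x-y|^{-N-qs}\,dx\,dy$, or else accept the slightly weaker regularity $u\in L^{p}(0,T;W^{\sigma,p}_{0}(\Omega))$ for every $\sigma<s$ that \eqref{sara} actually delivers.
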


\section{Existence Results: $p<2$ and $\l> \L_{N,p,s}$ }\label{singular}
Let consider now the more interesting case, $\l>\L_{N,p,s}$ and $p<2$. According to the value of $p$, we will prove that problem \eqref{eq:defsec1}  has a solution.
\subsection{The case $1<p< \frac{2N}{N+2s}$ and $\l>\L_{N,p,s}$}
The main result of this subsection is the following one.
\begin{Theorem}\label{th0}
Assume that $1<p< \frac{2N}{N+2s}$ and $\l>\L_{N,p,s}$. Let $u_0\in L^2(\O)$, then problem \eqref{eq:defsec1} has global solution $u\in L^{p}(0,T; W^{s,p}_0(\O))$.
\end{Theorem}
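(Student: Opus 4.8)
The plan is to pass to the limit in the monotone approximating scheme \eqref{pro:lineal1}; the only genuinely new ingredient is a uniform bound for $\{u_n\}$ in $L^{p}(0,T;W^{s,p}_0(\O))\cap L^\infty(0,T;L^2(\O))$. This is where the restriction $p<\frac{2N}{N+2s}$ (equivalently $p^*_s<2$, with $p^*_s=\frac{pN}{N-ps}$) plays an essential role: it lets one control the Hardy term \emph{without} appealing to the sharp constant $\L_{N,p,s}$, which is now strictly smaller than $\l$.

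\emph{A priori estimates.} Using $u_n$ as a test function in \eqref{pro:lineal1}, and using the monotonicity $u_{n-1}\le u_n$ (so that $u_{n-1}^{p-1}u_n\le u_n^p$) together with $|x|^{ps}+\frac1n\ge|x|^{ps}$, we get
\begin{equation*}
\frac12\frac{d}{dt}\int_\O u_n^2\,dx+\frac12\iint_{D_{\O}}\frac{|u_n(x,t)-u_n(y,t)|^p}{|x-y|^{N+ps}}\,dx\,dy\le\l\int_\O\frac{u_n^p}{|x|^{ps}}\,dx.
\end{equation*}
I would then estimate the right-hand side by Hölder's inequality applied to the three factors $u_n^{p-1}$, $u_n$ and $|x|^{-ps}$, with exponents $q_1=\frac{p^*_s}{p-1}$, $q_2=2$ and $q_3$ determined by $\frac1{q_1}+\frac1{q_2}+\frac1{q_3}=1$. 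A direct computation shows that $q_3<\frac{N}{ps}$ exactly when $p<\frac{2N}{N+2s}$, so that $|x|^{-ps}\in L^{q_3}(\O)$; since $(p-1)q_1=p^*_s$, the fractional Sobolev inequality (Theorem \ref{Sobolev}) yields
\begin{equation*}
\l\int_\O\frac{u_n^p}{|x|^{ps}}\,dx\le C\,\|u_n(t)\|_{W^{s,p}_0(\O)}^{p-1}\,\|u_n(t)\|_{L^2(\O)}.
\end{equation*}
A Young inequality absorbs a fraction of $\|u_n(t)\|^p_{W^{s,p}_0(\O)}$ into the left-hand side; writing $y_n(t):=\int_\O u_n^2(x,t)\,dx$ this leaves
\begin{equation*}
\frac12\,y_n'(t)+\frac14\,\|u_n(t)\|^p_{W^{s,p}_0(\O)}\le C'\,y_n(t)^{p/2}.
\end{equation*}
Since $p<2$ we have $p/2<1$, so from $y_n(0)=\|u_{0n}\|_{L^2}^2\le\|u_0\|_{L^2}^2$ the inequality $y_n'\le 2C'y_n^{p/2}$ gives an $n$-independent bound for $y_n$ on $[0,T]$; integrating the displayed inequality in $t$ then bounds $\int_0^T\|u_n(t)\|^p_{W^{s,p}_0(\O)}\,dt$ uniformly in $n$. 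Thus $\{u_n\}$ is bounded in $L^{p}(0,T;W^{s,p}_0(\O))\cap L^\infty(0,T;L^2(\O))$.

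\emph{Passage to the limit.} Being nondecreasing and bounded in $L^p(0,T;W^{s,p}_0(\O))$, the sequence $\{u_n\}$ converges a.e.\ and weakly in that space to some $u\in L^p(0,T;W^{s,p}_0(\O))$; since $u\in L^2(\O_{T})$ by monotone convergence and $p<2$, dominated convergence gives $u_n\to u$ strongly in $L^p(\O_{T})$. The reaction term $\frac{u_{n-1}^{p-1}}{|x|^{ps}+1/n}$ is nondecreasing in $n$ with uniformly bounded $L^1(\O_{T})$ norm (by the same Hölder--Sobolev estimate), hence converges to $\frac{u^{p-1}}{|x|^{ps}}$ in $L^1(\O_{T})$ by monotone convergence. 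The passage to the limit in the nonlocal operator follows from the monotonicity of $(-\Delta)^s_{p}$ via the by-now standard compactness/Minty arguments (as in the proofs of the preceding theorems and in \cite{Lio}), and the initial datum is recovered from $u_{0n}=T_n(u_0)\to u_0$ in $L^2(\O)$. This produces the desired solution $u\in L^{p}(0,T;W^{s,p}_0(\O))$.

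The main obstacle is the a priori estimate: one cannot afford to bound $\int_\O\frac{u_n^p}{|x|^{ps}}$ through the Hardy inequality, since the available constant $\L_{N,p,s}$ is smaller than $\l$. The hypothesis $p<\frac{2N}{N+2s}$ is exactly what makes $|x|^{-ps}$ belong to a Lebesgue space of large enough exponent so that the reaction term splits off a factor $\|u_n\|_{L^2}$, and the hypothesis $p<2$ is exactly what turns the resulting differential inequality into a sublinear one admitting a global, $n$-independent bound on $[0,T]$.
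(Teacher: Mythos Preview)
Your argument is correct and follows the same overall plan as the paper (energy estimate on the approximating scheme, then passage to the limit), but your a priori estimate is organised differently. The paper does not invoke the Sobolev inequality at all: it applies H\"older with only two exponents, $\tfrac{2}{2-p}$ and $\tfrac{2}{p}$, directly to the product $W_n(x)\,u_n^p$, obtaining
\[
\lambda\int_\O W_n(x)\,u_n^p\,dx\le \lambda\Big(\int_\O |x|^{-\frac{2ps}{2-p}}\,dx\Big)^{\frac{2-p}{2}}\Big(\int_\O u_n^2\,dx\Big)^{\frac p2},
\]
and the hypothesis $p<\frac{2N}{N+2s}$ is read off immediately as the integrability condition $\frac{2ps}{2-p}<N$. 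After a Young inequality this produces the \emph{linear} Gronwall inequality $y_n'\le C\,y_n + C$, rather than the sublinear one you reach. Your three-factor H\"older (with exponents $\frac{p^*_s}{p-1}$, $2$, $q_3$) together with Sobolev is a legitimate alternative: the computation showing $q_3<\frac{N}{ps}$ indeed reduces to the same condition $p<\frac{2N}{N+2s}$, and the sublinear ODE $y_n'\le C\,y_n^{p/2}$ is handled just as easily since $p<2$. Both routes work; the paper's is shorter because it bypasses Sobolev entirely and lands on a linear differential inequality, while yours has the mild advantage of making the role of the energy space explicit through the absorption of $\|u_n\|_{W^{s,p}_0}^{p-1}$.
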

\begin{proof}
Setting $ W_n(x)=\dfrac{1}{|x|^{ps}+\frac{1}{n}}$, then using a suitable iteration arguments we can prove that the problem
\begin{equation}\label{aprr}
\left\{
\begin{array}{rcll}
u_{t}+(-\D^s_{p}) u&=& \l W_n(x) u^{p-1}  & \text{ in } \O_{T},\\
u(x,t)&=& 0 & \text{ in }(\ren\setminus\O) \times (0,T), \\
u(x,0)&=&u_{0n}(x) & \mbox{  in  }\O,
\end{array}%
\right.
\end{equation}
has a bounded minimal nonnegative solution $u_n$. Using $u_n$ as a test function in \eqref{aprr} and by H\"older and Young inequalities, it follows that
\begin{eqnarray*}
 & \dint_{\O}u^2_n(x,T)\ dx + \dint_{0}^T \iint_{D_\O}\dfrac{|u_n(x,t)-u_n(y,t)|^{p}}{|x-y|^{N+ps}}\,dx dy\ dt\\ &=
\dint_{\O}u^2_{0n}(x)\ dx + \l \iint_{\O_T} W_n(x) u^p_n(x,t)\,dx dt\\
& \le \dint_{\O}u^2_{0}(x)\ dx + \l \int_{0}^T \Big(\dint_{\O} |W_n(x)|^{2/(2-p)}\,dx\Big)^{(2-p)/2} \times \Big(\dint_{\O}u^2_n(x,t)\,dx\Big)^{p/2} dt \\
& \le \dint_{\O}u^2_{0}(x)\ dx + \l \Big( \frac{2-p}{2} \iint_{\O_T}|W_n(x)|^{2/(2-p)}\,dx dt+\frac{p}{2} \iint_{\O_T}u^2_n(x,t)\,dx dt\Big).
\end{eqnarray*}
Calling $$y_n(T)\equiv \dint_{\O}  |u_n(x,T)|^{2}\,dx$$
and
\begin{eqnarray*}
\b_n(T) & = & \dint_{\O}|u_{0}(x)|^{2}\ dx + \l  \frac{2-p}{2} \iint_{\O_T}|W_n(x)|^{2/(2-p)}\,dx dt\\
&\le & C\bigg(\dint_{\O}|u_{0}(x)|^{2}\ dx+ \iint_{\O_T}\Big(\frac{1}{|x|^{\frac{2ps}{2-p}}}\Big)^{2/(2-p)}\,dx\,dt\bigg).
\end{eqnarray*}
Since $1<p<2N/(N+2s)$, then $\b_n(T)\le C(T+1)$. Thus
$$y_n(T) \leq \b_n(t) + \l \frac{p}{2} \dint_{0}^T y_n(s) ds,$$
and, as a consequence of Gronwall inequality, we reach that
\begin{equation}
\dint_{\O}  |u_n(x,T)|^{2}\,dx \leq \b_n(T) + \dint_{0}^T \b_n(s) e^{\a s} ds,
\end{equation}
where $\a=\a(\l,p)>0$. Thus
$$
\dint_{\O}u^2_n(x,T)\ dx + \dint_{0}^T \iint_{D_\O}\dfrac{|u_n(x,t)-u_n(y,t)|^{p}}{|x-y|^{N+ps}}\,dx dy\ dt\le C(T)
$$
and
$$
\iint_{\O_T} W_n(x) u^p_n(x,t)\,dx dt\le C(T).
$$
Hence we get the existence of a measurable function $u$ such that $u_n\uparrow u$ a.e in $\O_T$, $u_n\rightharpoonup u$ weakly in $L^{p}(0,T; W^{s,p}_0(\O))$ and $u_t\in L^{p'}(0,T; W^{-s,p'}_0(\O))$. It is not difficult to show that $u$ is globally defined in the time and that $u$ solves problem \eqref{eq:defsec1}.
\end{proof}
\subsection{The case $\l >\l_{N,p,s}$ and  $2 > p \geq 2N/(N+2s)$}

We begin by investigating the existence of a nonnegative selfsimilar solution for the Cauchy problem in the whole $\ren$.

We set $V(x,t)=t^{\a}F(x)$, then
\begin{equation}\label{time}
V_t=\a t^{\a-1}F(r) \mbox{  and  } (-\D^s_{p})V(x,t)=t^{\a(p-1)}\dyle\int_{\ren} \dfrac{|F(x)-F(y)|^{p-2}(F(x)-F(y))}{|x-y|^{N+ps}}dy.
\end{equation}
Setting $\a=\frac{1}{p-2}$, it holds
\begin{eqnarray}\label{good}
& \a F(x) +\dyle \int_{\ren} \,\dfrac{|F(x)-F(y)|^{p-2}(F(x)-F(y))}{|x-y|^{N+ps}}dy=\l\frac{|F(x)|^{p-2}F(x)}{|x|^{ps}}.
\end{eqnarray}
Let us search $F$ in the form $F(x)= F(|x|)=A|x|^\g$,
then \eqref{good} implies that
\begin{equation}\label{hh}
\a A r^\g + A^{p-1}r^{\g(p-1)-ps}\int_0^\infty\,|1-\sigma^\g|^{p-2}(1-\sigma^\g)\sigma^{N-1}K(\sigma) d\sigma=\l A^{p-1}r^{\g(p-1)-ps}
\end{equation}
where, as in \eqref{kkk},
$$
K(\s)=\dint\limits_{|y'|=1}\dfrac{dH^{n-1}(y')}{|x'-\s
y'|^{N+ps}}.
$$
Assume that $\g= \frac{-ps}{2-p}$, then $\gamma<0$ and $\g=\g(p-1)-ps$. Hence by \eqref{hh} we obtain that
$$A^{p-2}=\dfrac{\a}{\Psi(\g)+\l }\equiv B,$$
where $$\Psi(\g)=\int_0^\infty\,|\sigma^\g-1|^{p-2}(\sigma^\g-1)K(\sigma)\sigma^{N-1} d\sigma.$$
Let $\bar{\gamma}=-\gamma$, then $\bar{\gamma}>0$ and
$$\Psi(\gamma)=\int_0^\infty\,|1-\sigma^{\bar{\gamma}}|^{p-2}(1-\sigma^{\bar{\gamma}})K(\sigma)\sigma^{-\bar{\gamma}(p-1)+N-1} d\sigma\equiv \Psi_1(\bar{\gamma}).$$
Therefore,
$$B=\dfrac{1}{(2-p)(\Psi_1(\bar{\gamma})+\l) }$$ and then
$$
V(x,t)=B^{\frac{1}{p-2}}\Big(\dfrac{t}{|x|^{ps}}\Big)^{\frac{1}{2-p}}.
$$
Notice that, in the local, for $\l>\L_{N,p,1}$, the positivity of $B$ follows using a simple algebraic inequality. The situation is more complicated in the nonlocal case and some fine computations are needed.

Since
\begin{eqnarray}\label{TT}
\Psi_1(\bar{\gamma}) & = & \int_0^\infty\,|1-\sigma^{\bar{\gamma}}|^{p-2}(1-\sigma^{\bar{\gamma}})K(\sigma)\sigma^{-\bar{\gamma}(p-1)+N-1} d\sigma=\dyle \int_0^1  +  \int_1^\infty\\
&= & I_1+I_2,
\end{eqnarray}
then taking into consideration that $K(\frac{1}{\xi})=\xi^{N+ps}K(\xi)$ for $\xi>0$ and using the change of variable $\xi=\frac{1}{\sigma}$ in $I_2$,
there results that
\begin{equation}\label{psi11}
\Psi_1(\bar{\g})=\dint\limits_1^{\infty}K(\sigma)(\sigma^{\bar{\gamma}}-1)^{p-1}\left(\sigma^{ps-1}-\sigma^{N-1-\bar{\g}(p-1)}\right)\,d\sigma.
\end{equation}

Let us begin by proving the next Lemma.
\begin{lemma}\label{intt}
Assume that $1<p<2$ and $\l>\L_{N,p,s}$ then $B>0$
\end{lemma}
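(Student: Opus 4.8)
The plan is to show $B=\dfrac{1}{(2-p)(\Psi_1(\bar\g)+\l)}>0$ by proving that $\Psi_1(\bar\g)+\l>0$; since $2-p>0$ this suffices. The sign of $\Psi_1(\bar\g)$ itself is not obvious — the integrand in \eqref{psi11} contains the factor $\sigma^{ps-1}-\sigma^{N-1-\bar\g(p-1)}$, which changes sign on $(1,\infty)$ whenever $ps-1$ and $N-1-\bar\g(p-1)$ differ, so $\Psi_1(\bar\g)$ may well be negative. The key observation is that $\bar\g=\frac{ps}{2-p}$ is \emph{exactly} the exponent for which the weighted Hardy inequality \eqref{IngL1}–\eqref{ga} (with the choice of parameter matching $\b=0$, i.e.\ the unweighted fractional Hardy inequality of Theorem \ref{S-Hardy}) produces the constant $\L_{N,p,s}$. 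Concretely, I would compare the defining integral \eqref{psi11} for $\Psi_1(\bar\g)$ against the representation \eqref{LL}–\eqref{kkk} of $\L_{N,p,s}$.

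The main steps, in order:

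\textbf{Step 1 (Reformulate via the Hardy constant).} Recall from \eqref{LL} that
$$
\L_{N,p,s}=\int_0^1 \s^{ps-1}\,\big|1-\s^{\frac{N-ps}{p}}\big|^{p}\,K(\s)\,d\s ,
$$
and, applying $K(1/\xi)=\xi^{N+ps}K(\xi)$ and the substitution $\xi=1/\s$ as was done to pass from \eqref{TT} to \eqref{psi11}, rewrite this as an integral over $(1,\infty)$:
$$
\L_{N,p,s}=\int_1^{\infty}K(\s)\,\big(\s^{\frac{N-ps}{p}}-1\big)^{p-1}\big(\s^{\frac{N-ps}{p}}-1\big)\,\s^{-\frac{N-ps}{p}(p-1)}\big(\s^{ps-1}-\s^{N-1-\frac{N-ps}{p}(p-1)}\big)\,d\s,
$$
after the standard manipulation; equivalently one recognizes that $\L_{N,p,s}=-\Psi_1(\g_0)$ for the critical exponent $\g_0=\frac{N-ps}{p}$ arising from the ground-state-type substitution (this is precisely the computation behind \eqref{IngL1} with $\b=0$). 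Thus the claim $\Psi_1(\bar\g)+\l>0$ will follow from $\Psi_1(\bar\g)\ge -\L_{N,p,s}$ together with $\l>\L_{N,p,s}$.

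\textbf{Step 2 (Monotonicity in the exponent).} Define, for $t>0$,
$$
\Phi(t)=\int_1^{\infty}K(\s)\,(\s^{t}-1)^{p-1}\big(\s^{ps-1}-\s^{N-1-t(p-1)}\big)\,d\s ,
$$
so that $\Psi_1(\bar\g)=\Phi(\bar\g)$ and, by Step 1, $-\L_{N,p,s}=\Phi(\g_0)$ where $\g_0=\frac{N-ps}{p}$. Since $1<p<2$ and $\l>\L_{N,p,s}$ forces (through the balance condition $\bar\g=\g(p-1)-ps$ reinterpreted) a comparison between $\bar\g$ and $\g_0$, the idea is to show $\Phi$ is monotone — the factor $(\s^t-1)^{p-1}$ is increasing in $t$ for $\s>1$, while $\s^{N-1-t(p-1)}$ is decreasing in $t$, so the integrand is pointwise nondecreasing in $t$; hence $\Phi$ is nondecreasing. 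Then a one-sided bound $\bar\g\ge\g_0$ (or the appropriate inequality dictated by the constraints $p<2$, $ps<N$) yields $\Psi_1(\bar\g)=\Phi(\bar\g)\ge\Phi(\g_0)=-\L_{N,p,s}$, and combined with $\l>\L_{N,p,s}$ we get $\Psi_1(\bar\g)+\l>0$, hence $B>0$.

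\textbf{Step 3 (Conclusion).} From $B>0$ and $2-p>0$ we immediately obtain $A^{p-2}=B>0$, so $A$ is a well-defined positive real, and $B>0$ is exactly the assertion of the Lemma.

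\textbf{Main obstacle.} The delicate point is Step 2: one must establish that the relevant exponent comparison (between $\bar\g=\frac{ps}{2-p}$ and the critical Hardy exponent $\frac{N-ps}{p}$) goes the right way under the hypotheses $1<p<2$, $ps<N$, and more importantly that $\Phi$ genuinely has the needed monotonicity despite the sign change of $\s^{ps-1}-\s^{N-1-t(p-1)}$ inside the integral — the pointwise monotonicity of the integrand handles this cleanly, but care is needed near $\s=1$ to ensure integrability (using the behavior of $K$ near $1$, where $(\s^t-1)^{p-1}$ vanishes to order $p-1>0$) and near $\s=\infty$ (where $K(\s)\sim\s^{-(N+ps)}$ controls the polynomial growth). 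This is exactly the ``fine computation'' the authors allude to as being absent in the local case; I expect the rigorous justification of interchanging limits/monotonicity with the improper integral to be where most of the work lies.
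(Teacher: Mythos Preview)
Your reduction in Step~1 is exactly right: it suffices to show $\Psi_1(\bar\g)\ge -\L_{N,p,s}$, and indeed $\Psi_1(t)=-\Theta(t)$ in the notation of Proposition~\ref{prop1}, with $\Theta\big(\tfrac{N-ps}{p}\big)=\L_{N,p,s}$. The problem is Step~2. Your claimed pointwise monotonicity of the integrand in $t$ is false: for $\s>1$ the factor $(\s^{t}-1)^{p-1}$ is positive and increasing, but the factor $\s^{ps-1}-\s^{N-1-t(p-1)}$ is \emph{negative} whenever $t<\tfrac{N-ps}{p-1}$, and the product of a positive increasing function with a negative increasing function need not be increasing. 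In fact your $\Phi=-\Theta$ cannot be monotone on the relevant range, since Proposition~\ref{prop1} says $\Theta(0)=0$, $\Theta\big(\tfrac{N-ps}{p}\big)=\L_{N,p,s}>0$, and for every $0<\l<\L_{N,p,s}$ there are two roots $\eta_1<\tfrac{N-ps}{p}<\eta_2$ of $\Theta(\eta)=\l$; so $\Theta$ goes up then down. Moreover the exponent comparison you hope for, $\bar\g\ge \g_0=\tfrac{N-ps}{p}$, is equivalent to $p\ge \tfrac{2N}{N+2s}$ and therefore fails precisely in the range $1<p<\tfrac{2N}{N+2s}$ covered by the Lemma; and even if $\Phi$ were increasing, $\bar\g<\g_0$ would give the inequality in the wrong direction.

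The fix is to replace monotonicity by the \emph{extremal} property already recorded in Proposition~\ref{prop1}: $\g_0=\tfrac{N-ps}{p}$ is the global maximizer of $\Theta$, so $\Theta(\bar\g)\le \L_{N,p,s}$ automatically, hence $\Psi_1(\bar\g)=-\Theta(\bar\g)\ge -\L_{N,p,s}$ and $\l+\Psi_1(\bar\g)>\l-\L_{N,p,s}>0$. This is precisely what the paper does (after first disposing of the easy case $\tfrac{2N}{N+s}\le p<2$, where $ps\ge N-\bar\g(p-1)$ makes the integrand in \eqref{psi11} nonnegative and $\Psi_1(\bar\g)\ge 0$ directly). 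No comparison between $\bar\g$ and $\g_0$ is needed once you use the maximum rather than monotonicity.
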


To prove Lemma \ref{intt} we need the next result( see Lemma 3.1 in \cite{AB}).
\begin{Proposition}\label{prop1}
Let
$$
\Theta(\eta)=\dint\limits_1^{+\infty}K(\sigma)(\sigma^\eta-1)^{p-1}\left(\sigma^{N-1-\eta(p-1)}-\sigma^{ps-1}\right)\,d\sigma,
$$
where $\eta\ge 0$ and $\eta<\frac{N-ps}{p-1}$, then we have
\begin{enumerate}
\item $\Theta(0)=0$ and  $\Theta(\frac{N-ps}{p})=\L_{N,p,s}=\max\limits_{\eta\ge 0}\Theta(\eta)$.
\item For all $0<\l<\L_{N,p,s}$, then there exist $\rho_1,\rho_2$ such
that $0<\eta_1<\frac{N-ps}{p}<\eta_2$ and $\Theta(\eta_1)=\Theta(\eta_2)=\l$.
\end{enumerate}
\end{Proposition}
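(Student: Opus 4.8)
Set $\eta_0:=\frac{N-ps}{p}$. The plan is to pin down the two distinguished values of $\eta$, then control the sign of $\Theta'$, and finally invoke the Intermediate Value Theorem on each monotone branch.

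The identity $\Theta(0)=0$ is immediate, since the factor $(\sigma^\eta-1)^{p-1}$ vanishes identically for $\eta=0$. To identify $\Theta(\eta_0)$ with $\Lambda_{N,p,s}$, I would rewrite the defining integral of $\Lambda_{N,p,s}$ in \eqref{LL}, which lives on $(0,1)$, as an integral on $(1,+\infty)$ via the substitution $\sigma\mapsto 1/\sigma$ together with the homogeneity relation $K(1/\xi)=\xi^{N+ps}K(\xi)$. A direct computation yields $\Lambda_{N,p,s}=\int_1^{+\infty}\sigma^{ps-1}(\sigma^{\eta_0}-1)^{p}K(\sigma)\,d\sigma$. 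On the other hand, the two exponents in $\Theta$ satisfy $N-1-\eta(p-1)=\eta+ps-1$ precisely when $\eta=\eta_0$; substituting this into the definition collapses the bracket $\sigma^{N-1-\eta_0(p-1)}-\sigma^{ps-1}$ into $\sigma^{ps-1}(\sigma^{\eta_0}-1)$, so the integrand becomes $K(\sigma)(\sigma^{\eta_0}-1)^{p-1}\cdot\sigma^{ps-1}(\sigma^{\eta_0}-1)=K(\sigma)\sigma^{ps-1}(\sigma^{\eta_0}-1)^{p}$. Comparing the two expressions gives $\Theta(\eta_0)=\Lambda_{N,p,s}$.

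For the maximality claim I would differentiate under the integral sign. A short computation gives
$$\Theta'(\eta)=(p-1)\int_1^{+\infty}K(\sigma)\,(\log\sigma)\,(\sigma^\eta-1)^{p-2}\Big(\sigma^{N-1-\eta(p-1)}-\sigma^{\eta+ps-1}\Big)\,d\sigma.$$
For $\sigma>1$ the factors $K(\sigma)>0$, $\log\sigma>0$ and $(\sigma^\eta-1)^{p-2}>0$ (for $\eta>0$), so the sign of the integrand is governed entirely by the bracket. Since $\sigma\mapsto\sigma^a-\sigma^b$ is positive on $(1,+\infty)$ iff $a>b$, the bracket is positive for all $\sigma>1$ exactly when $N-1-\eta(p-1)>\eta+ps-1$, i.e. $\eta<\eta_0$, negative when $\eta>\eta_0$, and vanishes identically at $\eta=\eta_0$. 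Hence $\Theta'>0$ on $(0,\eta_0)$, $\Theta'(\eta_0)=0$, and $\Theta'<0$ on $(\eta_0,\frac{N-ps}{p-1})$. Together with $\Theta(0)=0$ this shows that $\Theta$ increases strictly from $0$ up to $\Theta(\eta_0)=\Lambda_{N,p,s}$ and then decreases, so $\eta_0$ is the unique maximizer and $\max_{\eta\ge 0}\Theta(\eta)=\Lambda_{N,p,s}$, which is item (1). Item (2) then follows from the Intermediate Value Theorem applied on each branch: $\Theta$ is continuous on $[0,\frac{N-ps}{p-1})$, and as $\eta\to\frac{N-ps}{p-1}$ the exponent $N-1-\eta(p-1)$ tends to $ps-1$, so the bracket $\sigma^{N-1-\eta(p-1)}-\sigma^{ps-1}$ tends to $0$ pointwise and $\Theta(\eta)\to 0$. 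Thus $\Theta$ runs strictly monotonically from $0$ to $\Lambda_{N,p,s}$ on $(0,\eta_0)$ and from $\Lambda_{N,p,s}$ back to $0$ on $(\eta_0,\frac{N-ps}{p-1})$, so for each $0<\lambda<\Lambda_{N,p,s}$ there exist (unique) $\eta_1\in(0,\eta_0)$ and $\eta_2\in(\eta_0,\frac{N-ps}{p-1})$ with $\Theta(\eta_1)=\Theta(\eta_2)=\lambda$.

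The main obstacle is analytic rather than algebraic: one must check that all integrals converge and that differentiation under the integral is legitimate, and both reduce to the local behavior of $K$ near $\sigma=1$ and at $\sigma=+\infty$. Near $\sigma=1$ the bracket and the factor $\log\sigma$ each vanish to first order while $(\sigma^\eta-1)^{p-2}$ blows up like $(\sigma-1)^{p-2}$, so the integrand behaves like $K(\sigma)(\sigma-1)^{p}$, which is integrable given the singularity $K(\sigma)\sim c\,|\sigma-1|^{-1-ps}$ (here $p(1-s)>0$ is what saves us); at infinity the relation $K(\sigma)\sim c\,\sigma^{-(N+ps)}$ makes the integrand decay like $\sigma^{-ps-1}$. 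These bounds, uniform in $\eta$ on compact subsets of $(0,\frac{N-ps}{p-1})$, provide the dominating function needed both for continuity of $\Theta$ and for the differentiation step, and this is exactly the point where the asymptotics of the kernel $K$ from \eqref{kkk} enter.
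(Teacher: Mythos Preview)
The paper does not prove this proposition at all: it simply cites it as Lemma~3.1 in \cite{AB}. So there is no ``paper's own proof'' to compare against, and your self-contained argument fills that gap.

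Your approach is correct and is in fact the standard one for this kind of statement. The derivative computation
\[
\Theta'(\eta)=(p-1)\int_1^{+\infty}K(\sigma)\,(\log\sigma)\,(\sigma^\eta-1)^{p-2}\bigl(\sigma^{N-1-\eta(p-1)}-\sigma^{\eta+ps-1}\bigr)\,d\sigma
\]
is right (one can check it by factoring $(\sigma^\eta-1)^{p-2}$ out of the product-rule expression and simplifying the bracket), and the resulting sign dichotomy at $\eta_0=\frac{N-ps}{p}$ is exactly what drives both items. The identification $\Theta(\eta_0)=\Lambda_{N,p,s}$ via the change of variable $\sigma\mapsto 1/\sigma$ and the homogeneity $K(1/\xi)=\xi^{N+ps}K(\xi)$ is also correct.

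Two small comments on the analytic justification. First, your blanket statement that the integrand of $\Theta'$ ``decays like $\sigma^{-ps-1}$'' at infinity is slightly loose: the actual rate is $\sigma^{-ps-\eta-1}\log\sigma$ when $\eta<\eta_0$ and $\sigma^{-N+\eta(p-1)-1}\log\sigma$ when $\eta>\eta_0$, both integrable precisely because $\eta\in(0,\frac{N-ps}{p-1})$. Second, for the continuity of $\Theta$ on the closed interval (in particular at the right endpoint, which you need for the IVT step on the decreasing branch), the cleanest uniform bound is $(\sigma^\eta-1)^{p-1}\bigl(\sigma^{N-1-\eta(p-1)}-\sigma^{ps-1}\bigr)\le \sigma^{N-1}$ for $\sigma>1$, which together with $K(\sigma)\sim c\,\sigma^{-N-ps}$ gives a single dominating function $C\sigma^{-ps-1}$ independent of $\eta$. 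With these refinements your argument is complete.
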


{\bf Proof of Lemma \ref{intt}.}

We have just to show that $(\Psi_1(\bar{\gamma})+\l)>0$. We split our work in two cases according to the value of $p$.

{\em The first case: $\frac{2N}{N+s}\le p<2$.}

In this case we have $ps\ge N-\bar{\gamma}(p-1)$, thus using \eqref{psi11} we get easily that $\Psi_1(\bar{\g})\ge 0$. Hence $B>0$ and the result follows in this case.

{\em The second case: $p<\frac{2N}{N+s}$.}

This the more delicate case. It is clear that $\bar{\gamma}<\frac{N-ps}{p-1}$ and that
\begin{equation}\label{psi110}
\Psi_1(\bar{\g})=-\dint\limits_1^{\infty}K(\sigma)(\sigma^{\bar{\gamma}}-1)^{p-1}\left(\sigma^{N-1-\bar{\g}(p-1)}-\sigma^{ps-1}\right)\,d\sigma<0.
\end{equation}
By \eqref{psi110} we have $\Psi_1(\bar{\g})=-\Theta(\bar{\g})\ge -\L_{N,p,s}$. Since $\l>\L_{N,p,s}$  by Proposition \ref{prop1}, we reach that
$\l+\Psi_1(\bar{\g})>\l-\L_{N,p,s}>0$. Hence we conclude. \cqd

\

It's easy to see that self-Similar solution obtained above is a super solution in the distribution sense to problem \eqref{eq:defsec1} if and only if  $p<\frac{2N}{N+s}$. It is clear that $\frac{V^{p-1}}{|x|^{ps}}\in L^1(\O_T)$. Hence in this case, using monotony argument we get the next existence result.
\begin{Theorem}\label{sing1}
Assume that $\frac{2N}{N+2s}<p<\frac{N}{N+s}$ and $\l>\L_{N,p,s}$. Let $u_0$ be such that $u_0(x)\le \frac{C}{|x|^{\frac{ps}{2-p}}}$, then problem \eqref{eq:defsec1} has global entropy solution $u$ such that $u\le V$ and
\begin{equation}\label{eq:eq1}
\int_0^T\iint_{\O\times \O}\dfrac{|u(x,t)-u(y,t)|^{q}}{|x-y|^{N+qs}}d\nu\ dt\le M
\end{equation}
for all $q<p_2=\frac{N(p-1)+ps}{N+s}$.
\end{Theorem}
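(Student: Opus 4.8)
The plan is to obtain $u$ as the monotone limit of the bounded minimal solutions $u_n$ of the regularized problems \eqref{aprr}, to use the explicit self-similar function $V$ constructed above as an a priori barrier from above, and then to feed the resulting $L^1$-control of the reaction term into the entropy-solution theory for the fractional $p$-Laplacian. The restriction $p<\frac{2N}{N+s}$ enters exactly here: as noted right before the statement it is precisely the condition under which $V$ is a distributional supersolution of \eqref{eq:defsec1} and $\dfrac{V^{p-1}}{|x|^{ps}}\sim t^{\frac{p-1}{2-p}}|x|^{-\frac{ps}{2-p}}$ belongs to $L^1(\O_T)$ (since then $\frac{ps}{2-p}<N$); the same computation gives $u_0\in L^1(\O)$, so we are in the $L^1$-data regime and an energy formulation cannot be expected.

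First I would set up the approximation. With $u_{0n}=T_n(u_0)$ and $W_n(x)=(|x|^{ps}+\frac{1}{n})^{-1}$, exactly as in the proof of Theorem \ref{th0} there is a bounded minimal nonnegative solution $u_n$ of \eqref{aprr}, and $\{u_n\}_n$ is nondecreasing because $W_n$ is. Since $W_n\le|x|^{-ps}$, a suitable self-similar profile of the type of $V$ (translated in time, so that its initial trace dominates $C|x|^{-\frac{ps}{2-p}}\ge u_{0n}$) is a supersolution of each problem \eqref{aprr}; by the comparison principle for the nonlocal parabolic operator and the minimality of $u_n$ one gets $0\le u_n\le V$ in $\O_T$ for every $n$, whence $u_n\uparrow u\le V$ a.e.\ in $\O_T$.

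Next I would derive the uniform a priori bounds. Testing \eqref{aprr} with $T_k(u_n)$, using the elementary pointwise inequality $|a-b|^{p-2}(a-b)(T_k(a)-T_k(b))\ge|T_k(a)-T_k(b)|^p$ and bounding the reaction term by $\dfrac{V^{p-1}}{|x|^{ps}}\in L^1(\O_T)$, one obtains, uniformly in $n$,
\begin{equation*}
\sup_{0\le t\le T}\int_\O\Theta_k(u_n(x,t))\,dx+\int_0^T\iint_{D_\O}\frac{|T_k(u_n(x,t))-T_k(u_n(y,t))|^p}{|x-y|^{N+ps}}\,dx\,dy\,dt\le C\,(k+1),
\end{equation*}
where $\Theta_k(r)=\int_0^r T_k(\sigma)\,d\sigma$. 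Combining these truncated energy estimates with the $L^\infty(0,T;L^1(\O))$ bound coming from $u_n\le V$, the fractional Sobolev inequality of Theorem \ref{Sobolev} and a Boccardo--Gallou\"{e}t type splitting over the dyadic level sets $\{2^j\le u_n<2^{j+1}\}$ gives the bound \eqref{eq:eq1} for every $q<p_2=\dfrac{N(p-1)+ps}{N+s}=p-\dfrac{N}{N+s}$, again uniformly in $n$; in particular $\{u_n\}$ is bounded in $L^q(0,T;W^{s,q}_0(\O))$ for such $q$.

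Finally I would pass to the limit. The monotone a.e.\ convergence $u_n\uparrow u$, the uniform bounds above, an Aubin--Lions type compactness argument and the algebraic monotonicity inequalities of Lemma \ref{algg} allow one to pass to the limit in the diffusion term of the (entropy) formulation written with admissible test functions of the form $T_k(u_n-\phi)$, while $\dfrac{u_n^{p-1}}{|x|^{ps}+\frac{1}{n}}\to\dfrac{u^{p-1}}{|x|^{ps}}$ in $L^1(\O_T)$ by dominated convergence with dominant $\dfrac{V^{p-1}}{|x|^{ps}}$. One concludes that $u$ is a global entropy solution of \eqref{eq:defsec1} with $u\le V$ and satisfying \eqref{eq:eq1}, the globality being immediate since every constant above remains finite on each finite time interval. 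I expect the main obstacle to be the comparison step: one must make precise in which sense the singular, time-degenerate self-similar profile $V$ is an admissible supersolution for this nonlocal parabolic problem and match it with the initial datum; once this barrier is in place, the a priori estimates and the passage to the limit follow the by-now standard $L^1$-theory for the fractional $p$-Laplacian as in \cite{AABP}.
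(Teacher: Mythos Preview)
Your proposal is correct and follows exactly the route the paper indicates: the paper does not give a detailed proof of this theorem but only the one-line sketch preceding its statement (``$V$ is a supersolution, $\dfrac{V^{p-1}}{|x|^{ps}}\in L^1(\O_T)$, hence using monotony argument we get the existence result''), and you have faithfully unpacked this into the comparison $u_n\le V$, the $L^1$-bound on the right-hand side, the truncated energy estimates, the Boccardo--Gallou\"et regularity yielding \eqref{eq:eq1}, and the entropy passage to the limit via \cite{AABP}. Your closing remark that the delicate point is making the comparison with the singular self-similar barrier $V$ rigorous is well taken and is precisely the step the paper leaves implicit.
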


In the case where $p\ge \frac{2N}{N+s}$, then $V\notin L^1(B_\d(0)\times (0,T))$ for any $\d>0$. However we can show that in this case we have a solution away from the origin and that solution is in a suitable fractional Sobolev space with degenerate weight. This is the main goal of the next computations.

Let us define the next weighted parabolic Sobolev space.
$$\Upsilon_\a=\bigg\{u: |x|^\a u\in L^p(0,T; W^{s,p}_0(\O))\cap C^0([0,T];L^2(\O)), |x|^{\a(p-1)} u_t\in L^{p'}(0,T; W^{-s,p'}_0(\O)) \bigg\}.$$
Notice that if $u\in \Upsilon_\a$, then $u\in L^p(0,T, E_0(\O))$, we refer to the Subsection \ref{degenerate} for some useful properties of the space $E_{\a, 0}(\O)$.

Then we have the next theorem.
\begin{Theorem}
Assume that $\l >\L_{N,p,s}$ and $\frac{2N}{N+s}\le p<2$. Suppose that $u_0\in L^2(|x|^\a dx, \O)$ for some $\a>\frac{2s}{2-p}-\frac{N}{p}$. Then there exists a function $u\in \Upsilon_\a$ which is a solution to \eqref{eq:defsec1}  away from the origin. Moreover for all $v\in \Upsilon_\a$, we have
\begin{equation}\label{distt}
\begin{array}{lll}
&-\dyle\int_0^T\langle v_t, |x|^{p\a} u\rangle dt +\dfrac 12\int_0^T\iint_{D_{\O}}U(x,y,t)(|x|^{p\a} v(x,t)-|y|^{p\a} v(y,t))d\nu\ dt\\
&\dyle=\l\iint_{\O_T} \frac{u^{p-1}|x|^{p\a} v}{|x|^{ps}} dx\,dt.
\end{array}
\end{equation}

\end{Theorem}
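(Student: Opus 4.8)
The plan is to obtain $u$ as the increasing limit of the minimal bounded solutions of a sequence of regularized problems, and to prove uniform a priori bounds in $\Upsilon_\a$ by using $|x|^{p\a}u_n$ as test function, invoking Lemma \ref{real11} to replace the integration by parts that is unavailable for the nonlocal operator. Concretely, as in the proof of Theorem \ref{th0}, I would set $W_n(x)=(|x|^{ps}+\frac1n)^{-1}$ and $u_{0n}=T_n(u_0)$ and let $u_n$ be the minimal bounded nonnegative solution of $u_{nt}+(-\D^s_{p})u_n=\l W_n(x)u_n^{p-1}$ in $\O_T$, with $u_n=0$ in $(\ren\setminus\O)\times(0,T)$ and $u_n(\cdot,0)=u_{0n}$, constructed by the iteration scheme of Section \ref{singular}. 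Since $W_n$ and $u_{0n}$ are nondecreasing in $n$, the comparison principle gives $u_n\uparrow u$ a.e.\ in $\O_T$ for some measurable $u\ge0$.

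The core of the argument is the a priori estimate, which I would obtain as follows. Testing the $n$-th equation with $|x|^{p\a}u_n$ (regularized near the origin, then removing the regularization) yields
\begin{equation*}
\frac12\frac{d}{dt}\io u_n^2|x|^{p\a}\,dx+\frac12\iint_{D_\O}\frac{U_n(x,y,t)\big(|x|^{p\a}u_n(x,t)-|y|^{p\a}u_n(y,t)\big)}{|x-y|^{N+ps}}\,dx\,dy=\l\io W_n\,u_n^p|x|^{p\a}\,dx ,
\end{equation*}
with $U_n(x,y,t)=|u_n(x,t)-u_n(y,t)|^{p-2}(u_n(x,t)-u_n(y,t))$. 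Applying Lemma \ref{real11} with $a_1=u_n(x,t)$, $a_2=u_n(y,t)$, $b_1=|x|^{p\a}$, $b_2=|y|^{p\a}$, so that $b_i^{1/p}=|x|^{\a}$, the integrand on the left is bounded below by $C_1||x|^\a u_n(x,t)-|y|^\a u_n(y,t)|^p-C_2(\max\{u_n(x,t),u_n(y,t)\})^p\,||x|^\a-|y|^\a|^p$; together with $\l W_n|x|^{p\a}\le\l|x|^{p\a-ps}$, $\max\{a,b\}^p\le a^p+b^p$ and the symmetry of $D_\O$ this gives
\begin{equation*}
\frac12\frac{d}{dt}\io u_n^2|x|^{p\a}\,dx+\frac{C_1}{2}\|u_n(\cdot,t)\|_{E_{\a,0}(\O)}^p\le\l\io u_n^p|x|^{p\a-ps}\,dx+C\iint_{D_\O}\frac{u_n(x,t)^p\,||x|^\a-|y|^\a|^p}{|x-y|^{N+ps}}\,dx\,dy .
\end{equation*}
The error double integral is computed by the radial change of variables $\s=|y|/|x|$ and the behaviour of $K$ exactly as in Subsection \ref{degenerate}: $\int_{\ren}||x|^\a-|y|^\a|^p|x-y|^{-N-ps}\,dy=C(\a)|x|^{p\a-ps}$ with $C(\a)<\infty$ for $\a$ in the admissible range, so everything reduces to controlling $\io u_n^p|x|^{p\a-ps}\,dx$. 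Since $p<2$, Young's inequality gives $\io u_n^p|x|^{p\a-ps}\,dx\le\io u_n^2|x|^{p\a}\,dx+C\io|x|^{p\a-\frac{2ps}{2-p}}\,dx$, and the last integral is finite precisely because $\a>\frac{2s}{2-p}-\frac Np$. Writing $y_n(t)=\io u_n^2(\cdot,t)|x|^{p\a}\,dx$ this reads $y_n'(t)\le Cy_n(t)+C$; Gronwall's inequality and the bound on $y_n(0)$ coming from the integrability hypothesis on $u_0$ give $\sup_{[0,T]}y_n\le C(T)$ uniformly in $n$, and reinserting this into the identity controls $\int_0^T\|u_n(\cdot,t)\|_{E_{\a,0}(\O)}^p\,dt$ and $\iint_{\O_T}u_n^p|x|^{p\a-ps}$ uniformly. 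Equivalently, $\{|x|^\a u_n\}$ is bounded in $L^p(0,T;W^{s,p}_0(\O))\cap C([0,T];L^2(\O))$, and from the equation $\{|x|^{\a(p-1)}u_{nt}\}$ is bounded in $L^{p'}(0,T;W^{-s,p'}_0(\O))$; that is, $\{u_n\}$ is bounded in $\Upsilon_\a$.

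Then I would pass to the limit. By monotonicity $u_n\uparrow u$ a.e., so by Fatou $u\in\Upsilon_\a$; up to subsequences $|x|^\a u_n\weakly|x|^\a u$ in $L^p(0,T;W^{s,p}_0(\O))$ and $|x|^{\a(p-1)}u_{nt}\weakly|x|^{\a(p-1)}u_t$ in its dual. The uniform bound on $\iint_{\O_T}u_n^p|x|^{p\a-ps}$ together with $u_n\uparrow u$ and Vitali's theorem give $W_n u_n^{p-1}|x|^{p\a}v\to\frac{u^{p-1}}{|x|^{ps}}|x|^{p\a}v$ in $L^1(\O_T)$ for every $v\in\Upsilon_\a$. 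For the nonlocal operator term I would argue by the monotone-operator/compactness method: testing the difference of two approximate equations and using the algebraic inequalities \eqref{alge3} and \eqref{alge4}, one shows that the weighted finite differences $|x|^\a u_n(x,t)-|y|^\a u_n(y,t)$ converge a.e.\ and then strongly in $L^p(D_\O\times(0,T),d\nu\,dt)$, which identifies the weak limit of $U_n$ and permits the passage to the limit in \eqref{distt}. Since $\frac{u^{p-1}}{|x|^{ps}}\notin L^1_{loc}$ near the origin when $p\ge\frac{2N}{N+s}$, the equation is recovered only on $(\ren\setminus\{0\})\times(0,T)$, whereas the weighted identity \eqref{distt} holds globally.

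The main obstacle will be the a priori estimate in the degenerate weighted space: there is no integration by parts for $(-\D^s_{p})$ against the product $|x|^{p\a}u_n$, and Lemma \ref{real11} is exactly what compensates for it, at the price of the extra term $\iint_{D_\O}(\max\{u_n(x),u_n(y)\})^p||x|^\a-|y|^\a|^p\,d\nu$. Keeping this term under control forces the radial integral $C(\a)$ to be finite and, after Young's inequality (which is where $p<2$ is essential), produces the condition $\a>\frac{2s}{2-p}-\frac Np$; one must also check that this range of $\a$ is compatible with the weighted Hardy inequality \eqref{IngL1} and with the finiteness of $C(\a)$. A second, more technical point is the strong $L^p$-compactness of the nonlocal finite differences required to pass to the limit in the quasilinear operator, which has to be carried out with care near the origin.
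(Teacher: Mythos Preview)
Your a priori estimate is essentially identical to the paper's: test with $|x|^{p\a}u_n$, invoke Lemma \ref{real11}, reduce the error term to $C(\a)\io u_n^p|x|^{p\a-ps}\,dx$ by the radial computation, then Young and Gronwall. The choice of approximating scheme (minimal solution versus the paper's iteration with $u_{n-1}$ on the right) is immaterial.

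The real divergence is in the passage to the limit in the nonlocal term. You propose to obtain \emph{strong} $L^p$-convergence of the weighted differences $|x|^\a u_n(x,t)-|y|^\a u_n(y,t)$ by ``testing the difference of two approximate equations'', i.e.\ a Minty/Leray--Lions argument adapted to the weighted space. This is vague, and the difficulty is genuine: the operator acts on $u_n$, not on $|x|^\a u_n$, so the monotonicity structure does not sit naturally in $E_{\a,0}(\O)$; making this rigorous would itself require exactly the kind of algebraic manipulation (relating $u_n(x)-u_n(y)$ to $|x|^\a u_n(x)-|y|^\a u_n(y)$ plus weight-difference terms) that the paper uses directly. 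The paper avoids strong convergence altogether. Writing $\tilde u_n=|x|^\a u_n$, $\tilde v=|x|^\a v$, it decomposes
\[
U_n(x,y,t)\big(|x|^{p\a}v(x,t)-|y|^{p\a}v(y,t)\big)=\tilde U_n(x,y,t)\big(\tilde v(x,t)-\tilde v(y,t)\big)+L_n(x,y,t),
\]
where $\tilde U_n=|\tilde u_n(x,t)-\tilde u_n(y,t)|^{p-2}(\tilde u_n(x,t)-\tilde u_n(y,t))$ and $L_n$ collects all terms carrying a factor of $1-(|x|/|y|)^\a$ or $1-(|x|/|y|)^{\a(p-1)}$. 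The first piece converges by duality: $\tilde U_n\to\tilde U$ a.e.\ (since $u_n\uparrow u$) and is bounded in $L^{p'}(D_\O\times(0,T),d\nu\,dt)$, hence converges weakly, and $\tilde v(x,t)-\tilde v(y,t)\in L^p$. The remainder $L_n$ is split further and shown to converge in $L^1$ by dominated convergence, using that $\int_{\ren}|1-(|x|/|y|)^\beta|^p|x-y|^{-N-ps}\,dx=C|y|^{-ps}$ (the same radial computation as for $C(\a)$) together with Hardy's inequality for $\tilde u$ and $\tilde v$; here the hypothesis $p<2$ is used once more, to bound $\big||a+b|^{p-2}(a+b)-|a|^{p-2}a\big|\le C|b|^{p-1}$. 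This buys a concrete, self-contained limit argument without any weighted Minty trick. Your route may be salvageable, but as written the compactness step is the gap.
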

\begin{proof} Recall that $u_n$ is the unique solution to the approximated problem \eqref{pro:lineal1}.
Let $w(x)=|x|^{p\a}$ where $\a>\frac{2s}{p-2}-\frac{N}{p}$ and define
$$
U_n(x,y,t)=|u_n(x,t)-u_n(y,t)|^{p-2}(u_n(x,t)-u_n(y,t)).
$$
Using $wu_n$ as a test function in \eqref{pro:lineal1}, it follows that
$$\int_{\O}u_{nt}wu_ndx+\int_{\O}(-\D^s_{p})u_n\,w u_ndx=\l\int_{\O}\dfrac{u^{p-1}_{n-1}u_n}{|x|^{ps}+\frac{1}{n}}w dx.$$
Integrating in the time and using the fact that the sequence $\{u_n\}_n$ is increasing in $n$, it follows that
\begin{eqnarray*}
&\dyle\frac{1}{2}\int_{\O}u^2_{n}(x,T)w(x)dx +\int_0^T\iint_{D_{\O}}U_n(x,y,t)(u_n(x,t)w(x)-u_n(y,t)w(y))d\nu\ dt\\
&\le \dyle \l\iint_{\O_T}\dfrac{(u_n(x,t)w^{\frac 1p}(x))^{p}}{|x|^{ps}}\,dx\,dt+\frac{1}{2}\int_{\O}u^2_{0}(x)w(x)dx.
\end{eqnarray*}
Using inequality \eqref{alge4}, we obtain
\begin{eqnarray*}
& U_n(x,y,t)(u_n(x,t)w(x)-u_n(y,t)w(y))\ge\\ & C_1|u_n(x,t)w(x)^{\frac{1}{p}}-u_n(y,t)w(y)^{\frac{1}{p}}|^{p}-C_2(u^p_n(x,t)+u^p_n(y,t))|w(x)^{\frac{1}{p}}-w(y)^{\frac{1}{p}}|^{p}.
\end{eqnarray*}
Hence we conclude that
\begin{equation}\label{last000}
\begin{array}{lll}
&\dyle\frac{1}{2}\int_{\O}u^2_{n}(x,T)w(x)\,dx+C_1\int_0^T\iint_{D_{\O}}|u_n(x,t)w(x)^{\frac{1}{p}}-u_n(y,t)w(y)^{\frac{1}{p}}|^{p}d\nu\ dt\\
&\le \dyle C_2 \int_0^T\iint_{D_{\O}}(u^p_n(x,t)+u^p_n(y,t))|w(x)^{\frac{1}{p}}-w(y)^{\frac{1}{p}}|^{p}d\nu\ dt\\
& \dyle +\l\iint_{\O_T}\dfrac{(u_nw^{\frac 1p})^{p}}{|x|^{ps}}\,dx\,dt+\frac{1}{2}\int_{\O}u^2_{0}(x)w(x)dx.
\end{array}
\end{equation}
Let us analyze the term $\dyle\int_0^T\iint_{D_{\O}}(u^p_n(x,t)+u^p_n(y,t))|w(x)^{\frac{1}{p}}-w(y)^{\frac{1}{p}}|^{p}d\nu dt$. Using symmetric arguments we get
$$
\begin{array}{lll}
& \dyle \int_0^T\iint_{D_{\O}}(u^p_n(x,t)+u^p_n(y,t))|w(x)^{\frac{1}{p}}-w(y)^{\frac{1}{p}}|^{p}d\nu dt\\
&=\dyle 2\int_0^T\iint_{D_{\O}}u^p_n(x,t)|w(x)^{\frac{1}{p}}-w(y)^{\frac{1}{p}}|^{p}d\nu dt\equiv 2J.
\end{array}
$$
We claim that
\begin{equation}\label{dege0}
J\le C\iint_{\O_T}\dfrac{u^p_nw}{|x|^{ps}}dxdt.
\end{equation}
Since $\Omega$ is a bounded domain, then $\Omega\subset\subset B_R(0)$, hence
$$
J \le  \int_0^T\int_{B_R(0)}u^p_{n}(x,t)\irn ||x|^{\a}-|y|^{\a}|^{p}d\nu dt.
$$
We set $r=|x|$ and $\rho=|y|$, then $x=rx', y=\rho y'$.
where $|x'|=|y'|=1$. Therefore we obtain that

\begin{eqnarray*}
J &\le & \int_0^T\int_{B_R(0)}u^p_{n}(x,t)|x|^{-ps}\int_0^{\infty}|r^{\a}-(r\s)^{\a}|^p\sigma^{N-1}
\left(\dint\limits_{|y'|=1}\dfrac{dH^{n-1}(y')}{|x'-\s
y'|^{N+ps}} \right)d\s\,dx\,dt\\
&\le & \int_0^T\int_{B_R(0)}u^p_{n}(x,t)|x|^{p\a-ps}\int_0^{\infty}|1-\s^{\a}|^p\sigma^{N-1}
K(\s)d\s \,dx\,dt\\
\end{eqnarray*}
Setting $C=\dyle\int_0^{\infty}|1-\s^{\a}|^p\sigma^{N-1} K(\s)d\s$, taking into consideration the behavior of $K$ near to $1$ and $\infty$, we can prove that $C<\infty$. Hence, since $u_n=0$ in $(\ren\setminus\O) \times (0,T)$, we get
$$
J\le C\iint_{\O_T}\dfrac{u^p_nw}{|x|^{ps}}\,dx\,dt
$$
and the claim follows.

Now, as $p<2$, using Young inequality,
$$
\iint_{\O_T}\dfrac{u^p_nw}{|x|^{ps}}\,dx\,dt\le C_3 \iint_{\O_T}u^2_nw(x)dx\,dt +C_4\iint_{\O_T}|x|^{p\a-\frac{2ps}{2-p}}\,dx\,dt.
$$
Since $\a>\frac{2s}{2-p}-\frac{N}{p}$, it holds that $\dyle\iint_{\O_T}|x|^{p\a-\frac{2ps}{2-p}}\,dx\,dt\le C_5 T$. Thus
\begin{equation}\label{dege2}
\iint_{\O_T}\dfrac{u^p_nw}{|x|^{ps}}\,dx\,dt \le C_3 \iint_{\O_T}u^2_n(x,t)w(x)\,dx\,dt +C T.
\end{equation}
Going back to \eqref{last000}, by \eqref{dege0} and \eqref{dege2}, we reach that
\begin{eqnarray*}
&\dyle\frac{1}{2}\int_{\O}u^2_{n}(x,T)w(x)dx +C_1\int_0^T\iint_{D_{\O}}|u_n(x,t)w(x)^{\frac{1}{p}}-u_n(y,t)w(y)^{\frac{1}{p}}|^{p}d\nu\ dt\\
&\le \dyle C_2 \iint_{\O_T}u^2_n(x,t)w(x)\,dx\,dt +C_3T+C_4.
\end{eqnarray*}
Using Gronwall Lemma we obtain that $\int_{\O}u^2_{n}(x,T)w(x)dx\le C(T)$ and then
$$
\int_0^T\iint_{D_{\O}}|u_n(x,t)w(x)^{\frac{1}{p}}-u_n(y,t)w(y)^{\frac{1}{p}}|^{p}d\nu\ dt\le C(T).
$$
We set $\tilde{u}_n=w(x) u_n$, then $\{\tilde{u}_n\}_n$ is increasing in $n$ and bounded in the space $L^p(0,T; W^{s,p}_0(\O))$. Hence we get the existence of a measurable function $u$ such that $u_n\uparrow u$ a.e. in $\O$, $u=0$ in $(\ren\setminus\O) \times (0,T)$ and $\tilde{u}_n\rightharpoonup w(x)u$ weakly in $L^p(0,T; W^{s,p}_0(\O))$. Let $U(x,y,t)=|u(x,t)-u(y,t)|^{p-2}(u(x,t)-u(y,t))$, then $U_n\to U$ a.e. in $D_{\O}\times (0,T)$.

Let us show that $u$ satisfies \eqref{distt}. Let $v\in \mathcal{C}^\infty_0(\O_T)$, using $wv$ as a test function in the approximating problem \eqref{pro:lineal1} and integrating in the time, it follows that
\begin{eqnarray*}
&-\dyle\int_0^T\int_{\O}v_t u_{n}(x,t)w(x)\,dx\,dt+\int_{\O}v(x,T)u_n(x,T)w(x)dx\\
&+\dyle \int_0^T\iint_{D_{\O}}U_n(x,y,t)(v(x,t)w(x)-v(y,t)w(y))d\nu\ dt\\
&=\dyle \l\iint_{\O_T}\dfrac{u^{p-1}_{n-1}wv}{|x|^{ps}+\frac 1n}\,dx\,dt+\int_{\O}u^2_{n0}(x)v(x,0)w(x)dx.
\end{eqnarray*}
Taking into consideration the previous estimates, we get easily that, as $n\to \infty$,
\begin{eqnarray*}
&-\dyle\iint_{\O_T}v_t u_{n}(x,t)w(x)\,dx\,dt+\int_{\O}v(x,T)u_n(x,T)w(x)dx\to \\
& -\dyle\iint_{\O_T}v_t u(x,t)w(x)\,dx\,dt+\int_{\O}v(x,T)u(x,T)w(x)dx
\end{eqnarray*}
and
\begin{eqnarray*}
&\dyle \iint_{\O_T}\dfrac{u^{p-1}_{n-1}wv}{|x|^{ps}+\frac 1n}\,dx\,dt+\int_{\O}u^2_{n0}(x)v(x,0)w(x)dx\to \\
& \dyle \iint_{\O_T}\dfrac{u^{p-1}wv}{|x|^{ps}}\,dx\,dt+\int_{\O}u^2_{0}(x)v(x,0)w(x)dx.
\end{eqnarray*}
Let us prove that
\begin{eqnarray*}
&\dyle \int_0^T\iint_{D_{\O}}U_n(x,y,t)(v(x,t)w(x)-v(y,t)w(y))d\nu\ dt\to \\
&\dyle \int_0^T\iint_{D_{\O}}U(x,y,t)(v(x,t)w(x)-v(y,t)w(y))d\nu\ dt.
\end{eqnarray*}
Define
$$\tilde{u}_n=|x|^\a u_n,\,\, \tilde{u}=|x|^\a u,\,\,\, \tilde{v}=|x|^\a v,$$
$$
\tilde{U}_n(x,y,t)=|\tilde{u}_n(x,t)-\tilde{u}_n(y,t)|^{p-2}(\tilde{u}_n(x,t)-\tilde{u}_n(y,t)), $$ $$\tilde{U}(x,y,t)=|\tilde{u}(x,t)-\tilde{u}(y,t)|^{p-2}(\tilde{u}(x,t)-\tilde{u}(y,t))
$$
and
$$
\tilde{V}(x,y,t)=|\tilde{v}(x,t)-\tilde{v}(y,t)|^{p-2}(\tilde{v}(x,t)-\tilde{v}(y,t)).
$$
Using the previous estimates on $\{u_n\}_n$, we have $\tilde{u}_n, \tilde{u}, \tilde{v}\in L^p(0,T; W^{s,p}_0(\O))$, $\{\tilde{u}_n\}_n$ is bounded in $L^p(0,T; W^{s,p}_0(\O))$ and $\tilde{u}_n\rightharpoonup \tilde{u}$ weakly in $L^p(0,T; W^{s,p}_0(\O))$.

We have
$$
U_n(x,y,t)(v(x,t)w(x)-v(y,t)w(y))=J_n(x,y,t)+L_n(x,y,t),
$$
where
$$
J_n(x,y,t)=\tilde{U}_n(x,y,t)(\tilde{v}(x,t)-\tilde{v}(y,t))
$$
and \begin{eqnarray*}
& L_n(x,y,t)=\\
&\bigg|(\tilde{u}_n(x,t)-\tilde{u}_n(y,t))+(1-(\frac{|x|}{|y|})^\a)\tilde{u}_n(y,t)\bigg|^{p-2}
\bigg((\tilde{u}_n(x,t)-\tilde{u}_n(y,t))+(1-(\frac{|x|}{|y|})^\a)\tilde{u}_n(y,t)\bigg)\\
& \times \bigg((\tilde{v}_n(x,t)-\tilde{v}_n(y,t))+(1-(\frac{|x|}{|y|})^{\a(p-1)})\tilde{v}_n(y,t)\bigg)-J_n(x,y,t).
\end{eqnarray*}
Using a duality argument we reach that
\begin{eqnarray*}
&\dyle \int_0^T\iint_{D_{\O}}J_n(x,y,t)d\nu\ dt\to \int_0^T\iint_{D_{\O}}J(x,y,t)d\nu\ dt\mbox{  as  }n\to \infty.
\end{eqnarray*}
We deal now with $L_n$. It is clear that $L_n\to L_n$ a.e in $D_\O\times (0,T)$, where
\begin{eqnarray*}
&L(x,y,t)=\\
& \bigg|(\tilde{u}(x,t)-\tilde{u}(y,t))+(1-(\frac{|x|}{|y|})^\a)\tilde{u}(y,t)\bigg|^{p-2}
\bigg((\tilde{u}(x,t)-\tilde{u}(y,t))+(1-(\frac{|x|}{|y|})^\a)\tilde{u}(y,t)\bigg)\\
& \times \bigg((\tilde{v}(x,t)-\tilde{v}(y,t))+(1-(\frac{|x|}{|y|})^{\a(p-1)})\tilde{v}(y,t)\bigg)-J(x,y,t).
\end{eqnarray*}
It is clear that
$$
|L_n(x,y,t)|\le L_{n1}(x,y,t)+L_{n2}(x,y,t),
$$
where
\begin{eqnarray*}
& L_{n1}(x,y,t)=\\
& \Bigg\|\bigg|(\tilde{u}_n(x,t)-\tilde{u}_n(y,t))+(1-(\frac{|x|}{|y|})^\a)\tilde{u}_n(y,t)\bigg|^{p-2}
\bigg((\tilde{u}_n(x,t)-\tilde{u}_n(y,t))+(1-(\frac{|x|}{|y|})^\a)\tilde{u}_n(y,t)\bigg)\\
&-\bigg|\tilde{u}_n(x,t)-\tilde{u}_n(y,t)\bigg|^{p-2}
\bigg(\tilde{u}_n(x,t)-\tilde{u}_n(y,t)\bigg)\Bigg\|\\
& \times\bigg|\tilde{v}(x,t)-\tilde{v}(y,t)\bigg|,
\end{eqnarray*}
and
\begin{eqnarray*}
L_{n2}(x,y,t) &=&\bigg|(\tilde{u}_n(x,t)-\tilde{u}_n(y,t))+(1-(\frac{|x|}{|y|})^\a)\tilde{u}_n(y,t)\bigg|^{p-1}\\
& \times & \bigg|(1-(\frac{|x|}{|y|})^{\a(p-1)})\tilde{v}(y,t)\bigg|.
\end{eqnarray*}
Hence
\begin{eqnarray*}
L_{n2}(x,y,t) &\le &\bigg|\tilde{u}_n(x,t)-\tilde{u}_n(y,t)\bigg|^{p-1}\times\bigg|(1-(\frac{|x|}{|y|})^{\a(p-1)})\tilde{v}(y,t)\bigg|\\
&+& \bigg|(1-(\frac{|x|}{|y|})^\a)\tilde{u}_n(y,t)\bigg|^{p-1}\times \bigg|(1-(\frac{|x|}{|y|})^{\a(p-1)})\tilde{v}(y,t)\bigg|\\
&\le & L_{n21}(x,y,t)+L_{n22}(x,y,t).
\end{eqnarray*}
We claim that $\bigg|(1-(\frac{|x|}{|y|})^{\a(p-1)})\tilde{v}(y,t)\bigg|\in L^p(D_\O\times (0,T),d\nu\ dt)$.
We have
\begin{eqnarray*}
&\dyle \int_0^T\iint_{D_{\O}}\bigg|(1-(\frac{|x|}{|y|})^{\a(p-1)})\tilde{v}(y,t)\bigg|^pd\nu\ dt\le\\
&\dyle \int_0^T\io \dfrac{|\tilde{v}(y,t)|^p}{|y|^{p\a(p-1)+ps}}\int_{\ren}\frac{\bigg||y|^{\a(p-1)}-|x|^{\a(p-1)}\bigg|^p}{|x-y|^{N+ps}}dx\,dy\,dt.
\end{eqnarray*}
We set $r=|x|, \rho=|y|$, then $x=rx', y=\rho y'$
where $|x'|=|y'|=1$. For $\sigma=\dfrac{r}{\rho}$, it holds that
\begin{eqnarray*}
&\dyle \int_0^T\iint_{D_{\O}}\bigg|(1-(\frac{|x|}{|y|})^{\a(p-1)})\tilde{v}(y,t)\bigg|^pd\nu\ dt\\
&\le \dyle \iint_{\O_T}\dfrac{|\tilde{v}(y,t)|^p}{|y|^{p\a(p-1)+ps}}\int_0^\infty \bigg||y|^{\a(p-1)}-(\s|y|)^{\a(p-1)}\bigg|^p \s^{N-1}K(\s) d\s\, dy\, dt \\
&\le \dyle \iint_{\O_T}\dfrac{|\tilde{v}(y,t)|^p}{|y|^{ps}}\int_0^\infty \bigg|1-\s^{\a(p-1)}\bigg|^p \s^{N-1}K(\s)\, d\s\, dy\,dt \\
&\le \dyle C\iint_{\O_T}\dfrac{|\tilde{v}(y,t)|^p}{|y|^{ps}}\,dy\, dt,
\end{eqnarray*}
where
$$
C=\int_0^\infty \bigg|1-\s^{\a(p-1)}\bigg|^p \s^{N-1}K(\s) d\s<\infty.
$$
Since $\tilde{v}\in L^p(0,T; W^{s,p}_0(\O))$, using the Hardy inequality, it follows that $\dyle\iint_{\O_T}\dfrac{|\tilde{v}(y,t)|^p}{|y|^{ps}}\,dy\, dt<\infty$ and then the claim follows.

Therefore $L_{n21}$ converges strongly in $L^1(D_\O\times (0,T),d\nu\ dt)$. In the same way we can prove that $L_{n22}$ converge strongly in $L^1(D_\O\times (0,T),d\nu\ dt)$. Hence using the Dominated convergence theorem we obtain that $L_{n2}$ converges to $L_2$ strongly in $L^1(D_\O\times (0,T),d\nu\ dt)$ where
\begin{eqnarray*}
L_{2}(x,y,t) &=&\bigg|(\tilde{u}(x,t)-\tilde{u}(y,t))+(1-(\frac{|x|}{|y|})^\a)\tilde{u}(y,t)\bigg|^{p-1}\\
& \times & \bigg|(1-(\frac{|x|}{|y|})^{\a(p-1)})\tilde{v}(y,t)\bigg|.
\end{eqnarray*}

Now, since $p<2$, then
\begin{eqnarray*}
L_{n1}(x,y,t) &\le & C\bigg|(1-(\frac{|x|}{|y|})^\a)\tilde{u}_n(y,t)\bigg|^{p-1}
\times \bigg|(\tilde{v}(x,t)-\tilde{v}(y,t))\bigg|,
\end{eqnarray*}
Since $\tilde{v}\in L^p(0,T; W^{s,p}_0(\O))$, then using the same computations as in the previous claim, we reach that $\bigg|(1-(\frac{|x|}{|y|})^\a)\tilde{u}_n(y,t)\bigg|^{p-1}\in L^{\frac{p}{p-1}}(D_\O\times (0,T),d\nu\ dt)$. Therefore using the Dominated convergence theorem it follows that  $L_{n1}\to L_1$ converges to $L_1(x,y,t)$ strongly in $L^1(D_\O\times (0,T),d\nu\ dt)$ where
\begin{eqnarray*}
& L_{1}(x,y,t)=\\
&\Bigg\|\bigg|(\tilde{u}(x,t)-\tilde{u}(y,t))+(1-(\frac{|x|}{|y|})^\a)\tilde{u}_n(y,t)\bigg|^{p-2}
\bigg((\tilde{u}(x,t)-\tilde{u}(y,t))+(1-(\frac{|x|}{|y|})^\a)\tilde{u}_n(y,t)\bigg)\\
&- \bigg|\tilde{u}(x,t)-\tilde{u}(y,t)\bigg|^{p-2}
\bigg(\tilde{u}(x,t)-\tilde{u}(y,t)\bigg)\Bigg\|\\
& \times \bigg|(\tilde{v}(x,t)-\tilde{v}(y,t))\bigg|.
\end{eqnarray*}
Combining the above estimates, we conclude that
\begin{eqnarray*}
&\dyle \int_0^T\iint_{D_{\O}}U_n(x,y,t)(v(x,t)w(x)-v(y,t)w(y))d\nu\ dt\to \\
&\dyle \int_0^T\iint_{D_{\O}}U(x,y,t)(v(x,t)w(x)-v(y,t)w(y))d\nu\ dt.
\end{eqnarray*}
Hence $u\in \Upsilon_\a$ satisfies \eqref{distt}.  It is clear that $u$ is a distributional solution to \eqref{eq:defsec1} in $\O\backslash \{0\}\times (0,T)$.
\end{proof}

\section{The singular case $p<2$: Further properties of the solutions. }\label{singg}

In this section we suppose that $p<2$, our main goal is to get natural condition on the data in order to show the existence or non existence of finite time extinction. The first result in this direction is the following.

\begin{Theorem}\label{exis2}
Assume that $\l<\L_{N,p,s}$ and define $u$ to be the minimal solution to the problem
\begin{equation}\label{propa}
\left\{
\begin{array}{rcll}
u_t+(-\D^s_{p}) u & = & \l\dfrac{u^{p-1}}{|x|^{ps}}  & \text{ in } \O_{T}, \\
u & \ge &  0 & \text{ in }\ren\times (0,T), \\
u & = & 0 & \text{ in }(\ren\setminus\O) \times (0,T), \\
u(x,0) & = & u_0(x) & \mbox{  in  }\O,
\end{array}%
\right.
\end{equation}
then we have
\begin{enumerate}
\item if $\frac{2N}{N+2s}\le p<2$ and $u_0\in L^2(\O)$, there exists a finite time $T^*(N,p,|\O|,\L_{N,p,s},||u_0||_2)\equiv T^*\ge ||u_0||^{2-p}_2
    |\O|^{\frac{p}{2}-1+\frac{ps}{N}}$ such that $u(.,t)\equiv 0$ for $t\ge T^*$.
\item if $1<p<\frac{2N}{N+2s}$ and $u_0 \in L^{\nu+1}(\O)\cap L^{2}(\O)$ with $\nu+1=\frac{N(2-p)}{ps}$, there exists $C(N,p,s)>0$ such that
if $\l<C(N,p,s)$, then $u( .,t) \equiv 0$ for all $t\geq T^*$ where $T^*=T^*(\l,C, u_0)$.
\end{enumerate}
\end{Theorem}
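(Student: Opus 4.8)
The plan is, in each of the two regimes, to derive for a suitable nonnegative energy functional $\phi(t)$ attached to $u$ a differential inequality $\phi'(t)\le -c\,\phi(t)^{\theta}$ with $0<\theta<1$, from which $\phi$ must vanish for all $t\ge T^{*}:=\phi(0)^{1-\theta}/\bigl(c(1-\theta)\bigr)$; once $\phi(T^{*})=0$, minimality of $u$ (comparison with the trivial solution on $[T^{*},T]$) forces $u(\cdot,t)\equiv 0$ for $t\ge T^{*}$. All the energy manipulations below are first carried out on the approximating sequence $\{u_n\}$ of \eqref{pro:lineal1} (and on truncations $T_k(u)$ where needed), with every constant independent of $n$ and $k$, and then inherited by $u$ via $u_n\uparrow u$, Fatou's lemma, and the a priori bounds of Section~\ref{sec3}.

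\textbf{Part (1)}, $\tfrac{2N}{N+2s}\le p<2$. Here $p^{*}_{s}=\tfrac{pN}{N-ps}\ge 2$, so $W^{s,p}_0(\O)\hookrightarrow L^{p^{*}_{s}}(\O)\hookrightarrow L^2(\O)\hookrightarrow W^{-s,p'}(\O)$ and $u$ is an admissible test function. Testing \eqref{propa} with $u$, using the fractional Hardy--Sobolev inequality \eqref{hardy} to absorb the reaction term $\l\int_\O u^p/|x|^{ps}\,dx$ into the Gagliardo energy (possible since $\l<\L_{N,p,s}$), and then bounding that energy from below via Theorem~\ref{Sobolev} combined with H\"older's inequality on the bounded set $\O$ (this is the only place $p^{*}_{s}\ge 2$ is used), one obtains for $y(t):=\int_\O u^2(x,t)\,dx$
\begin{equation*}
y'(t)+c\,y(t)^{p/2}\le 0,\qquad c= c_0\, S\,\bigl(1-\tfrac{\l}{\L_{N,p,s}}\bigr)\,|\O|^{-(\frac p2-1+\frac{ps}{N})},
\end{equation*}
with $c_0=c_0(N,p,s)>0$, the exponent being exactly $p\bigl(\tfrac12-\tfrac1{p^{*}_{s}}\bigr)=\tfrac p2-1+\tfrac{ps}{N}$. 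Since $p/2<1$, integrating $\tfrac{d}{dt}\bigl(y^{1-p/2}\bigr)\le -c(1-\tfrac p2)$ gives extinction at $T^{*}=\|u_0\|_2^{2-p}/\bigl(c(1-\tfrac p2)\bigr)$, and keeping track of the constants yields the stated lower bound $T^{*}\ge \|u_0\|_2^{2-p}|\O|^{\frac p2-1+\frac{ps}{N}}$.

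\textbf{Part (2)}, $1<p<\tfrac{2N}{N+2s}$. Now $p^{*}_{s}<2$ and the $L^2$--estimate is unavailable; instead test \eqref{propa} with $u^{\nu}$, where $\nu+1=\tfrac{N(2-p)}{ps}$. Set $m:=\tfrac{p+\nu-1}{p}=\tfrac{(2-p)(N-ps)}{p^2 s}$ and $v:=u^{m}$; one checks $\nu>1$ and $u^{\nu}=v^{\nu p/(p+\nu-1)}$ with exponent $\ge 1$, so $u^{\nu}$ is admissible as soon as $v\in W^{s,p}_0(\O)$. By the algebraic inequality \eqref{alge3} of Lemma~\ref{algg} with $\a=\nu$ the nonlocal term is bounded below by $\tfrac{c_3}{2}\iint_{D_\O}\tfrac{|v(x,t)-v(y,t)|^p}{|x-y|^{N+ps}}\,dx\,dy$, while the reaction term equals $\l\int_\O v^p/|x|^{ps}\,dx$, which Theorem~\ref{S-Hardy} bounds above by $\tfrac{\l}{2\L_{N,p,s}}\iint_{D_\O}\tfrac{|v(x,t)-v(y,t)|^p}{|x-y|^{N+ps}}\,dx\,dy$. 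Hence, with $\phi(t):=\int_\O u^{\nu+1}(x,t)\,dx$,
\begin{equation*}
\frac{1}{\nu+1}\,\phi'(t)\le -\Bigl(\frac{c_3}{2}-\frac{\l}{2\L_{N,p,s}}\Bigr)\iint_{D_\O}\frac{|v(x,t)-v(y,t)|^p}{|x-y|^{N+ps}}\,dx\,dy,
\end{equation*}
which is strictly dissipative once $\l<C(N,p,s):=c_3\L_{N,p,s}$. The decisive point is the identity $\tfrac{\nu+1}{m}=p^{*}_{s}$, so that $\phi(t)=\int_\O v^{p^{*}_{s}}$ and Theorem~\ref{Sobolev} gives $\iint_{D_\O}\tfrac{|v(x,t)-v(y,t)|^p}{|x-y|^{N+ps}}\,dx\,dy\ge S\,\phi(t)^{p/p^{*}_{s}}$ with $p/p^{*}_{s}=\tfrac{N-ps}{N}<1$. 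Thus $\phi'\le -C\,\phi^{(N-ps)/N}$, and integrating as before produces the finite extinction time $T^{*}=T^{*}(\l,C,\|u_0\|_{\nu+1})$, finite precisely because $u_0\in L^{\nu+1}(\O)$.

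\textbf{Main obstacle.} The weak/entropy solution is a priori too irregular to be inserted into itself (Part~1) or to admit $u^{\nu}$ as a legitimate test function (Part~2), so the real work is to run the above on the approximants $T_k(u_n)$: one must verify that $v=u^{m}\in W^{s,p}_0(\O)$ with the Gagliardo bound required for Theorem~\ref{Sobolev} (which comes from the a priori energy estimates of Section~\ref{sec3}), keep all constants independent of $n$ and $k$, and pass to the limit $k\to\infty$, $n\to\infty$ so that the differential inequality, and hence the extinction time, is preserved. A minor additional check is the comparison argument showing $u\equiv 0$ on $[T^{*},T]$ once $\phi(T^{*})=0$.
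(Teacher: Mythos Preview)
Your proof is correct and follows essentially the same route as the paper: in Part (1) you test with $u$, absorb the Hardy term, apply Sobolev plus H\"older to get $y'+cy^{p/2}\le 0$; in Part (2) you test with $u^{\nu}$, invoke the algebraic inequality \eqref{alge3} and Hardy to reach $\phi'+C\phi^{(N-ps)/N}\le 0$, exploiting the identity $\frac{\nu+p-1}{p}\,p^{*}_{s}=\nu+1$. Your write-up is in fact slightly more careful than the paper's, which simply says ``using an approximation argument, we can take $u^{\nu}$ as test function'' and omits the final comparison step you flag.
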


\begin{proof}
We follow closely the arguments used in \cite{AP}. Using $u$ as a test function in \eqref{propa}, we get
\begin{equation*}
\frac{1}{2}\frac{d}{dt}\int_{\Omega }u^{2}dx+ \frac{1}{2}\iint_{D_{\O}}\dfrac{|u(x,t)-u(y,t)|^{p}}{|x-y|^{N+ps}}dx \ dy=  \l\int_{\Omega }\dfrac{|u|^{p}}{|x|^{ps}}dx.
\end{equation*}
By Sobolev and Hardy inequalities, we reach that
\begin{equation*}
\frac{1}{2}\frac{d}{dt}\int_{\Omega }u^{2}dx+\frac{C(S,\L_{N,p,s})}{2} \left( \int_{\Omega }\left\vert u\right\vert ^{p^{\ast }_s}dx\right) ^{\frac{p}{p^\ast_s }}\leq 0.
\end{equation*}
Since $\frac{2N}{N+2s}<p<2$, then $p^*_s>2$, thus by H\"older inequality, we obtain
$$\int_{\O}u^2(x,t)dx\le C(\O)\bigg(\int_{\O}\mid u^{p^*_s}(x,t)|\ dx\bigg)^{\frac{2}{p^*_s}}.$$
Thus
$$\frac{1}{2}\frac{d}{dt}\parallel u(x,t)\parallel^2_2+c(\L_{N,p,s}) |\O|^{\frac{p}{p^*_s}-\frac{p}{2}}\parallel u(x,t)\parallel^p_2\le 0.
$$
As a conclusion we reach that
$$\parallel u(x,T)\parallel_2\le \parallel u_0\parallel_2\Big(1- \frac{(2-p)c(\L_{N,p,s})|\O|^{\frac{p}{p^*_s}-\frac{p}{2}}T}{\parallel u_0\parallel^{2-p}_2}\Big)^{\frac{1}{2-p}}
$$
Hence if $T<T^*$, $u(x,T)=0$ and the result follows.

Assume that $1<p<\frac{2N}{N+2s}$, using an approximation argument, we can take $u^{\nu}$ as test function in \eqref{propa}, it holds that
\begin{eqnarray*}
&\dyle \frac{1}{\nu+1}\frac{d}{dt}\int_{\Omega }u^{\nu+1}dx+\frac{1}{2}\iint_{D_{\O}}|u(x,t)-u(y,t)|^{p-2}(u(x,t)-u(y,t))(u^\nu(x,t)-u^\nu(y,t))d\nu\\
& = \dyle\l\int_{\Omega }\dfrac{u^{p-1+\nu}}{|x|^{ps}}dx.
\end{eqnarray*}
Hence, by inequality\eqref{alge3}, we get
\begin{equation*}
\frac{1}{\nu+1}\frac{d}{dt}\int_{\Omega }u^{\nu+1}dx+\frac{C}{2}\iint_{D_{\O}}|u^{\frac{p+\nu-1}{p}}(x,t)-u^{\frac{p+\nu-1}{p}}(y,t)|^{p}\,d\nu\le \l\int_{\Omega }\dfrac{u^{p-1+\nu}}{|x|^{ps}}dx.
\end{equation*}%
Using now Hardy inequality
\begin{equation*}
\frac{1}{\nu+1}\frac{d}{dt}\int_{\Omega }u^{\nu+1}dx+(\frac{C}{2}-\frac{\l}{\L_{N,p,s}})\iint_{D_{\O}}|u^{\frac{p+\nu-1}{p}}(x,t)-u^{\frac{p+\nu-1}{p}}(y,t)|^{p}\, d\nu\le 0.
\end{equation*}%
Assume that $\l<\dfrac{C\L_{N,p,s}}{2}$, hence by using Sobolev inequality, we conclude that
\begin{equation*}
\frac{1}{\nu+1}\frac{d}{dt}\int_{\Omega }u^{\nu+1}dx+C(\L_{N,p,s})\left( \int_{\Omega }u^{\frac{\left( \nu+p-1\right) }{p}p^{\ast }_s}dx\right) ^{\frac{p}{p^{\ast }_s}}\leq 0.
\end{equation*}
Recall that $\nu=\frac{N(2-p)-ps}{ps}$, then  $\frac{\nu+p-1}{p}p^{\ast }_s=\nu+1 $.\\

$$\frac{1}{\nu+1}\frac{d}{dt} ||u(x,t)||^{\nu+1}_{\nu+1}+C||u(x,t)||^{\nu+p-1}_{\nu+1} \le 0$$
Now,  we get that
$$
||u(x,T)||_{\nu+1}\le ||u_0||_{\nu+1}\Big(1-\frac{C T}{\parallel u_0\parallel^{2-p}_{\nu+1}}\Big)^{\frac{1}{2-p}}.
$$
Hence the result follows.
\end{proof}

Now, for the more general problem
\begin{equation}\label{concave}
\left\{
\begin{array}{rcll}
u_t+(-\D^s_{p}) u & = & \dyle\l\frac{u^{p-1}}{|x|^{ps}}+u^q  &
\text{ in } \O_{T}=\Omega \times (0,T)  , \\
u & \ge & 0 &
\text{ in }\ren\times (0,T), \\
u &= & 0 & \text{ in }(\ren\setminus\O) \times (0,T), \\
u(x,0) & = & u_0(x) & \mbox{  in  }\O,
\end{array}%
\right.
\end{equation}
where $q\le 1$, as in Theorem \ref{exis2}, we can prove that \eqref{concave} has a solution with finite time extension, more precisely we have
\begin{Theorem}\label{concave-extinction}
Assume that $p-1<q\le 1, \l <\L_{N,p,s}$ and $u_0\in L^2(\O)$. Then problem \eqref{concave} has a nonnegative
 minimal solution $u\in L^p(0,T; W^{s,p}_{ 0}(\Omega))$, moreover
 \begin{enumerate}
 \item if $p\ge \frac{2N}{N+2s}$,  then under a smallness condition
on $||u_0||_2$, there exists a finite time  $T^*$ such that $u(.,t)\equiv 0$ for all $t\ge T^*$.
\item if $1<p<\frac{2N}{N+2s}$, $p-1<q\leq 1$ and $u_0 \in L^{\nu+1}(\O)\cap  L^{2}(\O)$ with $\nu+1=\frac{N(2-p)}{ps}$, then there exists $C>0$ such that if $\l<C$, then $u( .,t) \equiv 0$ for all $t\geq T^*$ for some $T^*>0$.
\end{enumerate}
\end{Theorem}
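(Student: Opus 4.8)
The plan is to follow the two-step scheme of Theorem \ref{exis2}: first produce the minimal solution by a monotone iteration, then close a scalar differential inequality for a suitable power of a norm of $u$. For existence I would set $u_{0n}=T_n(u_0)$, $u_{00}\equiv 0$, and let $u_n$ be the solution of
\[
u_{nt}+(-\D^s_{p})u_n=\l\,\frac{u_{n-1}^{p-1}}{|x|^{ps}+\frac1n}+u_{n-1}^{\,q}\quad\text{in }\O_T,
\]
with $u_n=0$ in $(\ren\setminus\O)\times(0,T)$ and $u_n(\cdot,0)=u_{0n}$. Since $r\mapsto r^{p-1}$ and $r\mapsto r^{q}$ are nondecreasing on $\re^+$, the comparison principle gives $0\le u_n\le u_{n+1}$, so $u_n\uparrow u$ a.e., and each $u_n$ is bounded. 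To pass to the limit on every $[0,T]$ I need a bound uniform in $n$: testing with $u_n$, using Theorem \ref{S-Hardy} to absorb the Hardy term (here $\l/\L_{N,p,s}<1$), and estimating $\int_{\O}u_n^{q+1}\le|\O|^{\frac{1-q}{2}}\|u_n(t)\|_2^{q+1}\le C(1+\|u_n(t)\|_2^2)$ by H\"older and Young (possible because $q+1\le2$), one gets $\frac{d}{dt}\|u_n(t)\|_2^2+c\,\|u_n(t)\|_{W^{s,p}_0(\O)}^p\le C(1+\|u_n(t)\|_2^2)$, whence Gronwall bounds $\{u_n\}$ in $L^\infty(0,T;L^2(\O))\cap L^p(0,T;W^{s,p}_0(\O))$. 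Standard monotonicity and compactness arguments, as in \cite{Lio} and in the earlier theorems, then show that $u$ is a global solution, minimal by construction.

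\textbf{Extinction: the case $p\ge\frac{2N}{N+2s}$.} Testing \eqref{concave} with $u$ (on the approximations, then letting $n\to\infty$) yields
\[
\frac12\frac{d}{dt}\|u\|_2^2+\frac12\iint_{D_{\O}}\frac{|u(x,t)-u(y,t)|^p}{|x-y|^{N+ps}}\,dx\,dy=\l\int_{\O}\frac{|u|^p}{|x|^{ps}}\,dx+\int_{\O}u^{q+1}\,dx .
\]
By Theorem \ref{S-Hardy} the Hardy term is at most $\frac{\l}{\L_{N,p,s}}$ times half the Gagliardo seminorm, so since $\l<\L_{N,p,s}$ a positive multiple of that seminorm survives; bounding it below by Theorem \ref{Sobolev} and then H\"older (note $p^*_s>2$ in this range), and using $\int_{\O}u^{q+1}\le|\O|^{\frac{1-q}{2}}\|u\|_2^{q+1}$, with $y(t):=\|u(t)\|_2^2$ I arrive at
\[
y'(t)\le-c_1\,y(t)^{p/2}+c_2\,y(t)^{(q+1)/2}.
\]
Because $q>p-1$ we have $\tfrac{q+1}{2}>\tfrac p2$, so the positive term is of strictly higher order near $y=0$: if $\|u_0\|_2$ is small enough that $c_2\,\|u_0\|_2^{\,q+1-p}\le\frac{c_1}{2}$, this inequality persists for all later times (as $y$ is then nonincreasing and stays below $y(0)$), giving $y'\le-\frac{c_1}{2}\,y^{p/2}$ with $p/2<1$, hence $y\equiv0$ after a finite time $T^*\sim\|u_0\|_2^{2-p}$.

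\textbf{Extinction: the case $1<p<\frac{2N}{N+2s}$; the main obstacle.} Here I would instead test with $u^\nu$, $\nu+1=\frac{N(2-p)}{ps}$, use \eqref{alge3} to bound the bilinear term below by $c\iint_{D_{\O}}|u^{\frac{p+\nu-1}{p}}(x)-u^{\frac{p+\nu-1}{p}}(y)|^p\,d\nu$, absorb the Hardy term under the extra hypothesis $\l<\frac{C\,\L_{N,p,s}}{2}$ (this is the constant $C$ in the statement), and apply Theorem \ref{Sobolev} together with the identity $\frac{\nu+p-1}{p}\,p^*_s=\nu+1$; writing $z(t):=\|u(t)\|_{\nu+1}^{\nu+1}$ and using $\int_{\O}u^{q+\nu}\le|\O|^{\frac{1-q}{\nu+1}}\|u\|_{\nu+1}^{q+\nu}$ gives $z'\le-c_1 z^{\frac{\nu+p-1}{\nu+1}}+c_2 z^{\frac{q+\nu}{\nu+1}}$ with $\frac{q+\nu}{\nu+1}>\frac{\nu+p-1}{\nu+1}<1$, and the same smallness argument on $\|u_0\|_{\nu+1}$ concludes. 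The genuine difficulty is not the ODE bookkeeping but the rigor of the energy identities: $u$ (resp.\ $u^\nu$) is admissible as a test function only after a truncation/regularization step, and one must verify that the reaction term $u^q$, which is not locally Lipschitz at $0$, does not obstruct the passage to the limit in the approximating scheme — and this is exactly where the range $p-1<q\le1$ enters, making $q+1\le2$ in the existence estimate and $q+\nu>\nu+p-1$ in the extinction comparison.
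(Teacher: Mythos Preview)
Your proposal is correct and follows exactly the approach the paper indicates: the paper gives no detailed proof of this theorem, merely stating that it proceeds ``as in Theorem~\ref{exis2}'', and your adaptation of that argument (monotone approximation for existence, testing with $u$ or $u^\nu$ and closing the scalar ODE $y'\le -c_1y^{\alpha}+c_2y^{\beta}$ with $\beta>\alpha$ under a smallness hypothesis) is precisely what is implicitly intended. One minor remark: in part~(2) your ODE comparison naturally requires smallness of $\|u_0\|_{\nu+1}$ as well, which the paper's statement omits --- this looks like an oversight in the statement rather than a defect in your argument.
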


\

If $q<p-1$, then as in the local case, a different phenomenon appears and non extinction in finite time occurs. More precisely we have the following result.

\begin{Theorem}\label{nonexis2} Assume that $1<p<2, \l\le \L_{N,p,s}$ and let $q<p-1$, then the problem
\begin{equation}\label{non1}
\left\{\begin{array}{rcll} u_t-\D^s_pu &=& \l\dfrac{u^{p-1}}{|x|^{ps}}+ u^{q} & \mbox{ in } \O\times (0,T),\\
 u&=&0 &\hbox{  in
\  } (\ren\backslash\O)\times (0,T),\\
u(x,0)&=& 0 & \mbox{ in }\O,

\end{array}
\right.
\end{equation}
has a global solution $u$ such that $u(x,t)>0$ for all $t>0$ and
$x\in \O$, namely there is non finite time extinction, moreover, $u(.,t)\uparrow w$ as $t \to \infty$ where
$w$ is the unique positive solution to problem
\begin{equation}\label{elnon1}
\left\{\begin{array}{rcll}-\D^s
_pw&=& \l\dfrac{w^{p-1}}{|x|^{ps}}+w^q & \mbox{ in } \O,  \\w&=&0 & \hbox{  in
\  } \ren\backslash\O.
\end{array}
\right.
\end{equation}
\end{Theorem}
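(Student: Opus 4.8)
The plan is to analyse first the stationary problem \eqref{elnon1}, then to produce the solution of \eqref{non1} as a monotone limit trapped between the stationary solution and a positive \emph{subsolution} issuing from the zero datum, and finally to let $t\to+\infty$. For \eqref{elnon1} I would minimise the functional
\[
J(\phi)=\frac1{2p}\iint_{D_\O}\frac{|\phi(x)-\phi(y)|^p}{|x-y|^{N+ps}}\,dx\,dy-\frac{\l}{p}\io\frac{|\phi|^p}{|x|^{ps}}\,dx-\frac1{q+1}\io|\phi|^{q+1}\,dx
\]
on $W^{s,p}_0(\O)$. Since $\l\le\L_{N,p,s}$, the first two terms are nonnegative by Theorem \ref{S-Hardy}, and they control the $W^{s,p}_0(\O)$-norm when $\l<\L_{N,p,s}$, or the weaker norm of Theorem \ref{impro} when $\l=\L_{N,p,s}$; as $q+1<p$ the last term is of lower order, so $J$ is coercive and weakly lower semicontinuous and attains its infimum at some $w$, which may be taken $w\ge0$ (replace $w$ by $|w|$) and $w\not\equiv0$ (because $J(t\phi)<0$ for small $t>0$). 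A strong maximum principle for $(-\D^s_p)$ gives $w>0$ in $\O$. Uniqueness of the positive solution rests on the sublinearity encoded in $q<p-1$: if $w_1,w_2$ are positive solutions, combining the nonlocal Picone inequality of Theorem \ref{pic} (applied to $(w_1,w_2)$ and to $(w_2,w_1)$) with the equations tested by $w_1$ and $w_2$ yields $\io(w_1^{q-p+1}-w_2^{q-p+1})(w_1^p-w_2^p)\,dx\ge0$, whence $w_1\equiv w_2=:w$ since $t\mapsto t^{q-p+1}$ is strictly decreasing and $t\mapsto t^p$ strictly increasing.

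To build the subsolution, let $\phi_1>0$ be a first Dirichlet eigenfunction of $(-\D^s_p)$ in $\O$, $(-\D^s_p)\phi_1=\mu_1\phi_1^{p-1}$, normalised so that $\|\phi_1\|_\infty=1$, and put $\underline u(x,t)=a(t)\phi_1(x)$ with $a(t)=\bigl(\tfrac{1-q}{2}\,t\bigr)^{1/(1-q)}$ for $0\le t\le t_*$ and $a(t)\equiv a(t_*)$ for $t>t_*$. Using that $(-\D^s_p)$ is positively $(p-1)$-homogeneous and acts only on the space variable,
\[
\underline u_t+(-\D^s_p)\underline u=a'(t)\,\phi_1+a(t)^{p-1}\mu_1\,\phi_1^{p-1},
\]
while the reaction at $\underline u$ dominates $a^q\phi_1^q$. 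Dividing the target inequality $a'\phi_1+\mu_1 a^{p-1}\phi_1^{p-1}\le a^q\phi_1^q$ by $\phi_1^q$ in $\O$ (both sides vanish on $\p\O$), and using $0<\phi_1\le1$, $1<p<2$ and $q<p-1<1$, it suffices to have $a'+\mu_1 a^{p-1}\le a^q$; if $t_*$ is chosen with $\mu_1\,a(t_*)^{p-1-q}\le\tfrac12$ this reduces to $a'\le\tfrac12 a^q$, which holds by the definition of $a$. The same smallness makes $a(t_*)\phi_1$ a subsolution of \eqref{elnon1}, so $\underline u\le a(t_*)\phi_1\le w$ in $\ren\times(0,\infty)$ by the weak comparison principle for $(-\D^s_p)$. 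Hence $\underline u$ is a subsolution of \eqref{non1} with $\underline u(\cdot,0)=0$ and $\underline u>0$ in $\O\times(0,\infty)$. \emph{This step is the crux}: a positive subsolution emanating from the zero datum is exactly what rules out extinction, and the condition $q<p-1$ enters here decisively; note also that one must keep the powers of $\phi_1$, since the crude bound $a'\phi_1\le a'$ fails near $\p\O$.

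I would then iterate: set $u^{(0)}\equiv w$ and let $u^{(k+1)}$ be the energy solution of $u_t+(-\D^s_p)u=\l (u^{(k)})^{p-1}/|x|^{ps}+(u^{(k)})^q$ in $\O_{T}$ with $u=0$ in $(\ren\setminus\O)\times(0,T)$ and $u(\cdot,0)=0$. Monotonicity of $(-\D^s_p)$ and the comparison principle give: $\{u^{(k)}\}_k$ is nonincreasing in $k$; $u^{(k)}\ge\underline u$ for every $k$ (induction: once $u^{(k)}\ge\underline u$ the frozen reaction dominates that of $\underline u$, so $u^{(k+1)}$ is a supersolution of the problem $\underline u$ sub-solves, with the same zero datum); and each $u^{(k)}$ is nondecreasing in $t$ (induction on $k$: the frozen reaction is then nondecreasing in $t$, so $u^{(k+1)}(\cdot,\cdot+h)$ is a supersolution of the problem solved by $u^{(k+1)}$, its datum at $t=0$ being $u^{(k+1)}(\cdot,h)\ge0$). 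Testing with $u^{(k)}$, absorbing $\io u^{q+1}\,dx$ by Young's inequality (since $q+1<p$) and the Hardy term by Theorem \ref{S-Hardy} (or Theorem \ref{impro}), gives uniform bounds in $L^p(0,T;W^{s,p}_0(\O))\cap\mathcal{C}([0,T],L^2(\O))$ and on $u^{(k)}_t$ in $L^{p'}(0,T;W^{-s,p'}_0(\O))$; Aubin--Lions compactness and Minty's monotonicity trick let us pass to the limit and obtain an energy solution $u$ of \eqref{non1} with $\underline u\le u\le w$, nondecreasing in $t$ and globally defined (the bound $u\le w$ prevents blow-up). In particular $u(x,t)\ge\underline u(x,t)>0$ for $0<t\le t_*$, hence by monotonicity in $t$ for every $t>0$: there is no finite-time extinction.

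For the asymptotics as $t\to+\infty$, since $t\mapsto u(\cdot,t)$ is nondecreasing and bounded above by $w$ it converges in $L^2(\O)$ to some $u_\infty$ with $0<\underline u(\cdot,t_*)\le u_\infty\le w$; letting $t\to\infty$ in the weak formulation integrated over $(t,t+1)$ shows $u_\infty$ is a weak positive solution of \eqref{elnon1}, so $u_\infty=w$ by the uniqueness above, i.e. $u(\cdot,t)\uparrow w$. I expect the main obstacles to be the subsolution construction just described and, at the threshold $\l=\L_{N,p,s}$, the systematic use of the weaker norm of Theorem \ref{impro} in place of the $W^{s,p}_0(\O)$-norm; the uniqueness of $w$ through the nonlocal Picone inequality, and the various nonlocal comparison principles, are likewise more delicate than their local counterparts.
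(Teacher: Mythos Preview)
The paper states Theorem~\ref{nonexis2} without proof (the statement is immediately followed by Section~\ref{sec:lineal01}), so there is no argument to compare against directly; the authors presumably have in mind the local analogue in \cite{AP}, \cite{AMTP}, \cite{DGP}. Your outline is a correct and essentially complete version of that strategy: variational existence and Picone-based uniqueness for the stationary problem~\eqref{elnon1}, a separated-variables subsolution $\underline u=a(t)\phi_1(x)$ emanating from zero (which is precisely where $q<p-1$ is used, since it makes the exponents $1-q$ and $p-1-q$ positive so that the factors $\phi_1^{1-q},\phi_1^{p-1-q}\le1$ can be absorbed), a monotone iteration trapped between $\underline u$ and $w$, and passage to the limit in time via the monotonicity of $t\mapsto u(\cdot,t)$ together with the uniqueness of $w$.

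Two minor remarks. First, your parenthetical ``the crude bound $a'\phi_1\le a'$ fails near $\partial\O$'' is phrased backwards: the inequality $a'\phi_1\le a'$ is of course true; what fails is any attempt to bound the left side by something independent of $\phi_1$, because the right side $a^q\phi_1^q$ tends to $0$ at $\partial\O$. Your actual computation (divide by $\phi_1^q$ and use $\phi_1\le1$) is the right way to handle this. Second, Theorem~\ref{pic} is stated for $\psi\in\mathcal C^\infty_0(\O)$, so the uniqueness argument requires a density step to take $\psi=w_i\in W^{s,p}_0(\O)$; this is routine but should be mentioned. At the threshold $\l=\L_{N,p,s}$ one indeed has to replace the $W^{s,p}_0$-control by the weaker norm of Theorem~\ref{impro}, exactly as you anticipate; the paper does the same in Section~\ref{sec3}.
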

\section{The case $p>2$ and $\l>\L_{N,p,s}$: Non existence result}\label{sec:lineal01}
In \cite{GP}, for the local case, the authors proved that if $p>2$, $\l>\L_{N,p,1}$ and $u_0\ge C$ in some ball $B_\eta(0)$, then problem \eqref{propa}  has non negative solution in the sense that if we consider $u_n$ to be the unique solution to problem  \eqref{pro:lineal1}, then for all $\e>0$, there exists $r(\e)>0$ such that $u_n(x,t)\to \infty$ as $n\to\infty$ if $|x|<r(\e)$ and $t>\e$. This phenomenon occur since the parabolic operator has the finite speed propagation and then if $\text{Supp}(u_0)\subset \O\backslash B_\eta(0)$, then for $t$ small, the Hardy potential has non effect and then the solution can exists for small $t$.

Since in our case, the nonlocal operator has not the finite speed propagation, we will show that problem \eqref{propa}  has non solution in a suitable sense.

Let us begin by the next property of the Hardy constant defined in \eqref{LL}. If $\O$ is a bounded domain such that $0\in \O$, then we define
\begin{equation}\label{LLOO}
\L_{N,p,s, \O}=\inf\limits_{u\in W^{s,p}_0(\O)}\dfrac{\frac 12 \dyle\iint_{D_\O}
\dfrac{|u(x)-u(y)|^p}{|x-y|^{N+ps}}dxdy}{\dyle\io\dfrac{|u(x)|^p}{|x|^{ps}}dx},
\end{equation}
then from \cite{AB} we have that $\L_{N,p,s,\O}=\L_{N,p,s}$ and $\L_{N,p,s,\O}$ is not achieved.

We are now in position to state the main non existence result of this subsection.
\begin{Theorem}\label{th:non}
Let $u_0\in L^1(\O)$ be such that $u_0\gneqq 0$ and $\l >\L_{N,p,s}$,
then problem \eqref{propa} has non positive solution \textit{obtained as limit of approximations} (SOLA).
\end{Theorem}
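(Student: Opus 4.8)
The plan is to argue by contradiction. Assume a SOLA of \eqref{propa} exists, i.e. that the sequence $\{u_n\}$ of (bounded, unique, nonnegative) solutions of the approximating problems \eqref{pro:lineal1} increases to a function $u$ which is finite a.e. in $\O_T$. Since $\{u_{0n}\}=\{T_n(u_0)\}$ is nondecreasing and the regularized potentials $(|x|^{ps}+\tfrac1n)^{-1}$ are nondecreasing in $n$, the comparison principle applied to the monotone iteration gives $u_n\le u_{n+1}$, hence $u_n\ge u_1$ for all $n\ge 1$, where $u_1$ solves $u_{1t}+(-\D^s_p)u_1=0$ (the source vanishes, as $u_{00}\equiv 0$) with datum $u_{01}=T_1(u_0)\gneqq 0$. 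Because $(-\D^s_p)$ is nonlocal (infinite speed of propagation), the weak Harnack inequality for nonlocal parabolic equations yields $u_1>0$ a.e. in $\O\times(0,T)$ and, for every ball $B_r=B_r(0)\Subset\O$ and every $\tau\in(0,T/4)$, a constant $c=c(\tau,r)>0$ with $u_n\ge u_1\ge c$ a.e. on $B_r\times[\tau,T-\tau]$, uniformly in $n$; moreover $c(\tau,r)$ is nondecreasing as $r\downarrow 0$.

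Fix such a ball $B_r$, let $\psi\in\mathcal C^\infty_0(B_r)$ with $\psi\ge 0$, let $\e>0$, and choose $\phi\in\mathcal C^\infty_c((0,T))$ with $0\le\phi\le 1$, $\phi\equiv 1$ on $[2\tau,T-2\tau]$, $\mathrm{supp}\,\phi\subset[\tau,T-\tau]$ and $\|\phi'\|_{L^1}=2$. Using $\phi(t)\,\psi^p(x)\,(u_n+\e)^{1-p}$ as a test function in \eqref{pro:lineal1} and integrating on $\O_T$: after the standard chain rule in time the parabolic contribution equals $-\int_0^T\phi'(t)g_n(t)\,dt$ with $g_n(t)=\tfrac1{2-p}\io\psi^p(u_n(\cdot,t)+\e)^{2-p}\,dx$, and since $p>2$ and $u_n\ge c$ on $\mathrm{supp}\,\phi'\times\mathrm{supp}\,\psi$ this contribution is bounded, uniformly in $n$ and $\e$, by $\tfrac{2}{p-2}c^{\,2-p}\io\psi^p\,dx$. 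For the diffusion term, noting that $\psi^p(u_n+\e)^{1-p}\in W^{s,p}_0(\O)$, the pointwise fractional Picone inequality (Theorem \ref{pic}, applied slicewise in $t$ with $u_n(\cdot,t)+\e$ in place of $u_n(\cdot,t)$, and without any sign hypothesis on $(-\D^s_p)u_n$) gives $\int_0^T\phi\,\langle(-\D^s_p)u_n,\psi^p(u_n+\e)^{1-p}\rangle\,dt\le\tfrac T2\iint_{D_\O}\tfrac{|\psi(x)-\psi(y)|^p}{|x-y|^{N+ps}}\,dx\,dy$. Keeping only the part of the source term where $\phi\equiv 1$, we obtain
\[
\l\!\int_{2\tau}^{T-2\tau}\!\!\io\frac{u_{n-1}^{p-1}\,\psi^p}{(|x|^{ps}+\frac1n)(u_n+\e)^{p-1}}\,dx\,dt\;\le\;\frac{2}{p-2}\,c^{\,2-p}\io\psi^p\,dx+\frac T2\iint_{D_\O}\frac{|\psi(x)-\psi(y)|^p}{|x-y|^{N+ps}}\,dx\,dy .
\]

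Now I let $n\to\infty$ and then $\e\to 0$. Since $u_{n-1},u_n\uparrow u$, $(|x|^{ps}+\tfrac1n)^{-1}\uparrow|x|^{-ps}$, and $u\ge u_1>0$ a.e., the left integrand converges pointwise to $|x|^{-ps}u^{p-1}(u+\e)^{1-p}\psi^p$; Fatou's lemma, followed by monotone convergence as $\e\downarrow 0$ (the right-hand side being $\e$-free), gives
\[
\l(T-4\tau)\io\frac{\psi^p}{|x|^{ps}}\,dx\;\le\;\frac{2}{p-2}\,c^{\,2-p}\io\psi^p\,dx+\frac T2\iint_{D_\O}\frac{|\psi(x)-\psi(y)|^p}{|x-y|^{N+ps}}\,dx\,dy
\]
for all $\psi\in\mathcal C^\infty_0(B_r)$. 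Using $\io\psi^p\,dx\le r^{ps}\io|x|^{-ps}\psi^p\,dx$, then fixing $\tau$ so small that $\l(T-4\tau)-\L_{N,p,s}T\ge\tfrac12(\l-\L_{N,p,s})T>0$, and afterwards $r$ so small (possible since $\inf_{r\le r_0}c(\tau,r)>0$ while $r^{ps}\to 0$) that $\tfrac{2}{p-2}c^{\,2-p}r^{ps}<\tfrac12(\l-\L_{N,p,s})T$, we obtain some $\tilde\l>\L_{N,p,s}$ with $\tilde\l\io|x|^{-ps}\psi^p\,dx\le\tfrac12\iint_{D_{B_r}}\tfrac{|\psi(x)-\psi(y)|^p}{|x-y|^{N+ps}}\,dx\,dy$ for all $\psi\in\mathcal C^\infty_0(B_r)$. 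This forces $\L_{N,p,s,B_r}\ge\tilde\l>\L_{N,p,s}$, contradicting the identity $\L_{N,p,s,B_r}=\L_{N,p,s}$ recalled just after \eqref{LLOO}. Hence no SOLA can exist.

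The main obstacle is the uniform lower bound $u_1\ge c(\tau,r)>0$ on interior parabolic cylinders: this is precisely where the absence of finite speed of propagation of the nonlocal operator is exploited, and it rests on a (nonlocal) parabolic weak Harnack inequality, a point that must be carefully set up. The remaining technical points — admissibility of $\phi\psi^p(u_n+\e)^{1-p}$ as a test function, the chain rule in time producing $g_n$, and the slicewise Picone estimate — are routine given $u_n\in L^\infty(\O_T)\cap L^p(0,T;W^{s,p}_0(\O))$ with $u_{nt}\in L^{p'}(0,T;W^{-s,p'}_0(\O))$, modulo a standard truncation/density argument. Observe finally that the factor $T$ in front of the diffusion term does no harm, being absorbed on the source side by the factor $T-4\tau$ once $\tau$ is chosen small relative to $T(\l-\L_{N,p,s})$; the whole mechanism is, morally, that the regularized solutions would provide a positive subsolution of the stationary Hardy operator near the origin, which is impossible when $\l>\L_{N,p,s}$ because $\L_{N,p,s}$ is the sharp, nonattained Hardy constant on every small ball centred at $0$.
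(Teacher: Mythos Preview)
Your proof is correct and follows essentially the same route as the paper: a contradiction via the Picone inequality for the approximating solutions $u_n$, using the uniform positive lower bound on $u_1$ (infinite speed of propagation) to control the time-derivative term, and then letting the radius of the test ball shrink to violate $\L_{N,p,s,B_r}=\L_{N,p,s}$. The only cosmetic differences are that the paper integrates directly on a time interval $[t_1,t_2]$ (no cutoff $\phi$, no $\e$-regularization), uses the truncated-potential approximation \eqref{eq:apro11} rather than the iterative scheme \eqref{pro:lineal1}, and absorbs $\io\psi^p\,dx$ into the Gagliardo seminorm via Sobolev rather than into the Hardy integral via $|x|\le r$; none of these affects the argument.
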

\begin{proof}
Without loss of generality we can assume that $u_0\in L^\infty(\O)$. We argue by contradiction, suppose that problem \eqref{propa} has a solution $u\gneqq 0$ obtained as a limit of approximation. Using Monotony argument we get easily that \eqref{propa} has a nonnegative minimal SOLA solution denoted by $u$ with $u=\limit_{n\to \infty}u_n$ and $u_n$ is the unique solution to the problem
\begin{equation}\label{eq:apro11}
\left\{
\begin{array}{rcll}
u_{nt}+(-\D^s_{p}) u_n & = & \dyle \l a_n(x)u_n^{p-1} &
\text{ in } \O_{T}=\Omega \times (0,T)  , \\
u_n & \ge & 0 &
\text{ in }\ren\times (0,T), \\
u_n &= & 0 & \text{ in }(\ren\setminus\O) \times (0,T), \\
u_n(x,0) & = & u_0(x) & \mbox{  in  }\O,
\end{array}%
\right.
\end{equation}
where $a_n(x)=\min\{n,\dfrac{1}{|x|^{ps}}\}$. It is clear that $\{u_n\}_n$ is increasing in $n$ and $u_n\uparrow u$ a.e. in $\O_T$.

Since the finite speed propagation does not holds for $u_1$, see \cite{AABP}, then we get the existence of $0<t_1<t_2$ such that for all $x\in \O$ and for all $t\in [t_1,t_2]$ we have $u_1(x,t)>0$. In particular for $0<\rho<<1$ be chosen later such that $B_\rho(0)\subset\subset \O$, we have $u_1(x,t)>C>0$ for all $(x,t)\in \bar{B}_\rho(0)\times [t_1,t_2]$.

Let $\bar{\eta}<\rho$ to be chosen later, then all $n\ge 1$, $u_n(x,t)\ge c=\frac{C}{2}$ for all $(x,t)\in
B_{\eta}(0)\x (t_1, t_2)$.

Consider
$\psi \in C^{\infty}_0(B_{\bar{\eta}} (0)) $, using Theorem
\ref{pic}, we obtain that
$$
\frac 12
\dint\dint_{D_{B_{\bar{\eta}}(0)}}\dfrac{|\psi(x)-\psi(y)|^{p}}{|x-y|^{N+ps}}dx\,dy\ge \io
\frac{(-\Delta)^s_{p} u_n }{u_n^{p-1}}|\psi|^p dx.
$$
Hence
\begin{equation*} \frac 12
\iint_{D_{B_{\bar{\eta}} (0)}}\dfrac{|\psi(x)-\psi(y)|^{p}}{|x-y|^{N+ps}}dx\,dy \ge \l\int_{B_{\bar{\eta}} (0)} | \psi |
^pa_n(x)dx-\int _{B_{\bar{\eta}} (0)} |
\psi | ^p\frac{u_{nt}}{u^{p-1}_n}dx.
\end{equation*}
Integrating in time,
$$
\begin{array}{lll}
&\dyle \frac{(t_2-t_1)}{2}
\iint_{D_{B_{\bar{\eta}} (0)}}\dfrac{|\psi(x)-\psi(y)|^{p}}{|x-y|^{N+ps}}dx\,dy\\
&\ge \dyle \l(t_2-t_1)\int _{B_{\bar{\eta}}
(0)} | \psi | ^pa_n(x) dx-\frac{1}{p-2}\int_{B_{\bar{\eta}} (0)}
\frac{|\psi | ^p}{u^{p-2}_n(x,t_1) }dx.
\end{array}
$$
Thus
\begin{equation}\label{uam00}
\begin{array}{lll}
& \dyle \frac{1}{2} \iint_{D_{B_{\bar{\eta}} (0)}}\dfrac{|\psi(x)-\psi(y)|^{p}}{|x-y|^{N+ps}}dx\,dy+
\frac{1}{(p-2)(t_2-t_1)c^{p-2}}
\int_{B_{\bar{\eta}} (0)}|\psi|^pdx
\\
& \ge \dyle\l\int_{B_{\bar{\eta}} (0)}
\dfrac{|\psi|^p}{|x|^{ps}}dx.
\end{array}
\end{equation}
Now, using H\"older and Sobolev inequalities we reach that
$$
\begin{array}{lll}
\dyle\int_{B_{\bar{\eta}} (0)}|\psi|^pdx & \le &  \bigg(\dyle\int_{B_{\bar{\eta}} (0)}|\psi|^{p^*_s}dx\bigg)^{\frac{p}{p^*}}|B_{\bar{\eta}}(0)|^{\frac{p^*-p}{p^*_s}}\\
&\le &\dyle C\bar{\eta}^{ps}\iint_{D_{B_{\bar{\eta}} (0)}}\dfrac{|\psi(x)-\psi(y)|^{p}}{|x-y|^{N+ps}}dx\,dy,
\end{array}
$$
where $C$ depends only on $N,p,s$. Thus going back to \eqref{uam00} we conclude that
\begin{equation}\label{uam001}
\begin{array}{lll}
& \dyle \bigg(\frac{1}{2}+ \frac{C \bar{\eta}^{ps} }{(p-2)(t_2-t_1)c^{p-2}}\bigg) \iint_{D_{B_{\bar{\eta}} (0)}}\dfrac{|\psi(x)-\psi(y)|^{p}}{|x-y|^{N+ps}}dx\,dy \ge \dyle\l\int_{B_{\bar{\eta}} (0)}
\dfrac{|\psi|^p}{|x|^{ps}}dx.
\end{array}
\end{equation}
Since
$\l>\L_{N,p,s}$, then we can choose ${\bar{\eta}}<<\rho$ such that
\begin{equation}\label{eq:ccc} \dfrac{\l}{1+\frac{2 C \bar{\eta}^{ps} }{(p-2)(t_2-t_1)c^{p-2}}}\ge \L_{N,p,s}+\e,
\end{equation}
for some $\e>0$. Going back to \eqref{uam001} we reach a contradiction with the Hardy inequality. Hence we conclude.
\end{proof}

\begin{remark}

\

\begin{enumerate}
\item Define $v(x,t)=C(t-t_1)\log\bigg(\frac{\bar{\eta}}{|x|}\bigg)$, then $v(x,t_1)=0$ and $v$ solves
\begin{equation}\label{logg}
\left\{
\begin{array}{rcll}
v_t+(-\D^s_{p}) v & \le & \dyle\frac{C}{|x|^{ps}}  &
\text{ in } B_{\bar{\eta}}(0)\times (t_1,t_2), \\
v &\le & 0 & \text{ in }(\ren\setminus B_{\bar{\eta}}(0)) \times (t_1,t_2), \\
v(x,t_1) & = & 0 & \mbox{  in  } B_{\bar{\eta}}(0).
\end{array}%
\right.
\end{equation}
Choosing $\bar{\eta}<<1$ and $C$ small and by the comparison principle we obtain that $v\le u$ in $B_{\bar{\eta}})(0)\times (t_1,t_2)$. Thus
$$\lim_{|x|\to
0}u(x,t)=\infty\,\,\forall \,\, t\in (t_1,t_2).$$

\item Following the same argument as in the proof of Theorem \ref{th:non} we can show that the problem
\begin{equation}\label{nongeneral}
\left\{
\begin{array}{rcll}
u_t+(-\D^s_{p}) u & = & \dyle\l\frac{u^{\a}}{|x|^{ps}}  &
\text{ in } \O_{T}=\Omega \times (0,T)  , \\
u & \ge & 0 &
\text{ in }\ren\times (0,T), \\
u &= & 0 & \text{ in }(\ren\setminus\O) \times (0,T), \\
u(x,0) & = & u_0(x) & \mbox{  in  }\O,
\end{array}%
\right.
\end{equation}
with $p>2$, $\lambda>0$  and $\a>p-1$, has nonnegative solution \textit{obtained as limit of approximations} (SOLA).
\end{enumerate}
\end{remark}
\section{Appendix}
We give here a detailed proof of the algebraic Lemma \ref{real11}.

{\bf Proof of Lemma \ref{real11}.}
If $(b_1,b_2)=(0,0)$ or $(a_1,a_2)=(0,0)$, then \eqref{alge4} holds trivially.

If $a_1=a_2$, then \eqref{alge4} holds for any $b_1, b_2\ge 0$ choosing $C_1\le C_2$.

Assume that $(b_1,b_2)\neq(0,0)$, $(a_1,a_2)\neq(0,0)$ and $a_1\neq a_2$.

We divide the proof in several cases.

{\bf I-The first case: $a_1>a_2\ge 0$.} We set $\delta=\dfrac{a_2}{a_1}\in [0,1)$, then in this case, \eqref{alge4} is equivalent to
\begin{equation}\label{alge411}
(1-\d)^{p-1}(b_1- \d b_2)\ge C_1 |b^{\frac{1}{p}}_1-\d b^{\frac{1}{p}}_2|^p-C_2 |b^{\frac{1}{p}}_1-b^{\frac{1}{p}}_2|^p.
\end{equation}
If $b_1=b_2$, then \eqref{alge411} take the form
$$
(1-\d)^{p}\ge C_1 (1-\d)^p
$$
that holds trivially since $C_1\le 1$. Thus we assume that $b_1\neq b_2$.

\begin{itemize}
\item  {\emph{Sub-case 1: $b_1>b_2\ge 0$.}}
We set $\theta=(\frac{b_2}{b_1})^{\frac{1}{p}}\in [0,1)$, then \eqref{alge411} take the form
\begin{equation}\label{alge4110}
(1-\d)^{p-1}(1- \d \theta^p)\ge C_1 (1-\d \theta)^p-C_2 (1-\theta)^p.
\end{equation}
We have
$$
1-\d \theta=(1-\theta)+\theta(1-\d).
$$
Thus
\begin{eqnarray*}
(1-\d \theta)^p & = & ((1-\theta)+\theta(1-\d))^p\\
&\le & (1+\e)^{p-1}\theta^p(1-\d)^p+(1+\frac{1}{\e})^{p-1}(1-\theta)^p
\end{eqnarray*}
where $\e>0$ is any positive constant.
Now, using the fact that $(1-\d)\theta^p<1-\d\theta^p$, we obtain that $\theta^p(1-\d)^p\le (1-\d)^{p-1}(1-\d\theta^p)$. Therefore using Young inequality we conclude that
$$
C_1(1-\d \theta)^p\le C_1(1+\e)^{p-1}(1-\d)^{p-1}(1-\d\theta^p)+C_1(1+\frac{1}{\e})^{p-1}(1-\theta)^p.
$$
It is clear that we can choose $C_1<1$ depending only on $\e$ such that $C_1(1+\e)^{p-1}=1$, thus
$$
C_1(1-\d \theta)^p\le (1-\d)^{p-1}(1-\d\theta^p)+C_2(1-\theta)^p
$$
where $C_2=\max\{1, C_1(1+\frac{1}{\e})^{p-1}\}$ and then \eqref{alge4110} holds in this case.

\item {\emph{Sub-case 2: $b_2>b_1\ge 0$.}}
In this case we set $\theta=(\frac{b_1}{b_2})^{\frac{1}{p}}\in [0,1)$, thus
\eqref{alge411} take the form
\begin{equation}\label{alge41101}
(1-\d)^{p-1}(\theta^p-\d)\ge C_1 |\theta-\d|^p-C_2 (1-\theta)^p.
\end{equation}
We divide the proof of \eqref{alge41101} into two cases:
\begin{enumerate}
\item {\bf Sub-sub-case i: $\theta^p>\d$.}
It is clear that $\d<\theta^p<\theta$, then we have
\begin{eqnarray*}
(\theta-\d)^p & = & ((\theta-\theta^p)+(\theta^p-\d))^p\\
&\le & (1+\e)^{p-1}(\theta^p-\d)^p+(1+\frac{1}{\e})^{p-1}(\theta-\theta^p)^p\\
&\le & (1+\e)^{p-1}(\theta^p-\d)^{p-1}(\theta^p-\d) +(1+\frac{1}{\e})^{p-
1}\theta^p(1-\theta^{p-1})^p.
\end{eqnarray*}
Since $1-\theta^{p-1}\le 1-\theta$ and $\theta^p-\d\le 1-\d$, then using the same hypothesis on $C_1, \e$ and $C_2$ as in the previous case, it follows that
$$
C_1(\theta-\d)^p \le (1-\d)^{p-1}(\theta^p-\d) +C_2(1-\theta)^p
$$
and then \eqref{alge41101} follows.
\item {\bf Sub-sub-case ii: $\theta^p\le \d$.} This is the more delicate case and it need some fine computations. It is clear that in this case we have to show that
    \begin{equation}\label{alge411010}
C_2 (1-\theta)^p\ge C_1 |\theta-\d|^p+(1-\d)^{p-1}(\d-\theta^p).
\end{equation}
Let begin by assuming that {\textit{$\theta<\d$}}, then trivially we have $C_1 |\theta-\d|^p\le (1-\theta)^p$. Hence we have just to show that
$$
(1-\d)^{p-1}(\d-\theta^p)\le C_2 (1-\theta)^p.
$$
For $\rho\in (0, \d)$, we define the function $h$ by
$$
h(\rho)=(1-\rho)^p+(1-\d)^{p-1}\rho^p.
$$
It is clear that $h'(\rho)=p\bigg(\rho^{p-1}(1-\d)^{p-1}-(1-\rho)^{p-1}\bigg)\le 0$. Since $\theta\le \d$, we conclude that $h(\theta)\ge h(\d)$ and then we conclude that
$$
(1-\theta)^p+(1-\d)^{p-1}\theta^p\ge (1-\d)^p+(1-\d)^{p-1}\d^p.
$$
Thus
$$
(1-\theta)^p\ge (1-\d)^{p-1}(1-\d-\theta^p +\d^p).
$$
Since $p<2$, then it is not difficult to show that $1-\d+\d^p\ge \d$, therefore we reach that
$$
(1-\theta)^p\ge (1-\d)^{p-1}(\d-\theta^p)
$$
and the result follows.

Assume now that $\d\le \theta\le\d^{\frac{1}{p}}$. As above for
$\rho\in (0, \theta)$, we define the function $h_1$ by
$$
h_1(\rho)=(1-\rho)^{p-1}(\rho-\theta^p),
$$
then
\begin{eqnarray*}
h'(\rho) &= & (1-\rho)^{p-2}\bigg(1-p\rho+(p-1)\theta^p\bigg)\\
&\ge & (1-\rho)^{p-2}\bigg(1-p\rho+(p-1)\rho^p\bigg).
\end{eqnarray*}
Since $p<2$, we have $(1-p\rho+(p-1)\rho^p)\ge 0$, thus $h'_1\ge 0$ and then $h_1(\d)\le h_1(\theta)$.
Hence
$$
(1-\d)^{p-1}(\d-\theta^p)\le (1-\theta)^{p-1}(\theta-\theta^p)\le (1-\theta)^p,
$$
where the last inequality follows using the fact that $2\theta\le 1+\theta^p$.
\end{enumerate}
\end{itemize}
\

{\bf II-The second case: $a_2>a_1\ge 0$.}
It is clear that
$$
|a_1-a_2|^{p-2}(a_1-a_2)(a_1b_1-a_2b_2)=|a_2-a_1|^{p-2}(a_2-a_1)(a_2b_2-a_1b_1),
$$
thus using the result of the first case, it follows that
\begin{eqnarray*}
|a_1-a_2|^{p-2}(a_1-a_2)(a_1b_1-a_2b_2) & \ge &  C_1 |a_2b^{\frac{1}{p}}_2-a_1b^{\frac{1}{p}}_1|^p-C_2(\max\{|a_2|, |a_1|\})^p|b^{\frac{1}{p}}_2-b^{\frac{1}{p}}_1|^p\\
&\ge & C_1 |a_1b^{\frac{1}{p}}_1-a_2b^{\frac{1}{p}}_2|^p-C_2(\max\{|a_1|, |a_2|\})^p|b^{\frac{1}{p}}_1-b^{\frac{1}{p}}_2|^p.
\end{eqnarray*}
and then we conclude.

\

{\bf III-The third case: $a_1, a_2\le 0$.} We set $\tilde{a}_1=-a_1$ and $\tilde{a}_2=-a_2$, then $\tilde{a}_1, \tilde{a}_2\ge 0$ and
$$
|a_1-a_2|^{p-2}(a_1-a_2)(a_1b_1-a_2b_2)=|\tilde{a}_1-\tilde{a}_2|^{p-2}(\tilde{a}_1-\tilde{a}_2)(\tilde{a}_1b_1-\tilde{a}_2b_2),
$$
then the result follows using the previous cases.

\

{\bf IV-The fourth case: $a_1<0<a_2$ or $a_2<0<a_1$.} Let assume that $a_2<0<a_1$ and define $\tilde{a}_2=-a_2$, we get
$$
|a_1-a_2|^{p-2}(a_1-a_2)(a_1b_1-a_2b_2)=(a_1+\tilde{a}_2)^{p-1}(a_1b_1+\tilde{a}_2 b_2).
$$
As in the previous case, without loss of generality we can assume that $a_1\ge \tilde{a}_2$ and $b_1\ge b_2$. Setting
$\delta=\frac{\tilde{a}_2}{a_1}, \theta=(\frac{b_2}{b_1})^{\frac{1}{p}}\in [0,1)$ then \eqref{alge4} is equivalent to
\begin{equation}\label{alge511}
(1+\d)^{p-1}(1+\d \theta^p)\ge C_1 (1+\d \theta)^p-C_2 (1-\theta)^p.
\end{equation}
We have
\begin{eqnarray*}
C_1(1+\d \theta)^p &\le & C_1(1+\d \theta^p-\d\theta^p+\d\theta)^p\\
&\le & C_1(1+\e)^{p-1}(1+\d\theta^p)^{p}+C_1(1+\frac{1}{\e})^{p-1}\d^p\theta^p(1-\theta^{p-1})^p\\
&\le & (1+\d)^{p-1}(1+\d\theta^p)+C_2(1-\theta)^p,
\end{eqnarray*}
where, as in the previous cases, we have used the fact that $C_1(1+\e)^{p-1}=1\le C_2$. Hence the result follows. \cqd


\begin{thebibliography}{lio}
\bibitem{AABP} B. Abdellaoui,  A. Attar, R. Bentifour, I. Peral, {\it On  the Fractional Parabolic p-laplacian equations with general datum.} https://arxiv.org/pdf/1612.01301.pdf

\bibitem{AB} B. Abdellaoui, R. Bentifour, {\it Caffarelli-Khon-Niremberg Type Inequalities of Fractional Order and Applications.} Journal of Functional Analysis. Available online. http://dx.doi.org/10.1016/j.jfa.2017.02.007

\bibitem{AM} B. Abdellaoui, F. Mahmoudi, {\it An improved Hardy inequality for a nonlocal operator}, Discrete Contin. Dyn. Syst. {\bf 36}, no. 3, (2016) 1143-1157,

\bibitem{AMPP} {B. Abdellaoui, M. Medina, I. Peral, A. Primo}, {\em Optimal results for the fractional heat equation involving
the Hardy potential }. Nonlinear Anal. TMA.  {\bf 140}, 2016, 166-207.

\bibitem{AMTP} { B. Abdellaoui, S.E. Miri, I. Peral,T.M. Touaoula},
{\em Some remarks on quasilinear parabolic problems with singular potential and a reaction term}. Nonlinear Differ. Equ. Appl. {\bf 21}, no. 4, (2014)  453-490.

\bibitem{AP1} B. Abdellaoui, I. Peral {\it The effect of Harnack inequality on the existence and nonexistence results for quasi-linear parabolic equations related to Caffarelli-Kohn-Nirenberg inequalities.} NoDEA Nonlinear Differential Equations Appl. 14 (2007), no. 3-4, 335-360.

\bibitem{Adams} {R. A. Adams} \emph{Sobolev spaces}, Academic Press, New York, 1975.

\bibitem{AP} J.A. Aguilar, I. Peral {\it Global behaviour of the Cauchy Problem
for some Critical Nonlinear Parabolic Equations},  SIAM Journal in Mathematical Analysis 31 (2000), no. 6,  1270-1294.

\bibitem{BG} P. Baras,  J. Goldstein,
{\it The heat equation with a singular potential}, Trans. Amer.
Math. Soc. {284} (1984), no. 1, 121-139.

\bibitem{BPV} B. Barrios, I. Peral S. Vita, {\it Some remarks about the summability of nonlocal nonlinear problems.} Adv. Nonlinear Anal. 4 (2015), no. 2, 91-107.

\bibitem{BM} D. Blanchard, F. Murat, H. Redwane, {\it Existence and Uniqueness of a Renormalized Solution for a Fairly General Class of Nonlinear Parabolic Problems}, Journal of Differential Equations Vol.177, 2, 2001, 331-374

\bibitem{CKN}  L. Caffarelli, R. Kohn, L. Nirenberg, {\it First Order Interpolation
Inequality with Weights,} Compositio Math. {\bf 53}, (1984), 259-275.


\bibitem{DaA}  { A. Dall'Aglio}, {\em Approximated solutions of equations with $L\sp 1$ data. Application to the $H$-convergence of quasi-linear parabolic equations},  Ann. Mat. Pura Appl.,  { 170}  (1996), 207--240.

\bibitem{DGP}
A. Dall'aglio, D. Giachetti, I. Peral, {\it Results on parabolic
equations related to some caffarelli-Khon-Nirenberg inequalities.}
Siam J. Math. Anal, 36, (2004), no. 3, 691-716.

\bibitem{FV} F. Ferrari, I. Verbitsky, {\it Radial fractional Laplace operators and Hessian
inequalities}, J. Differential Equations {\bf 253}, (2012), no. 1,
244-272.

\bibitem{GP} J. P. Garc\'{\i}a Azorero, I. Peral, {\it  Hardy Inequalities and some Critical Elliptic and  Parabolic problems}  Jou. Diff. Equations
     144  (1998)  no 2,  441-476.

\bibitem{G} L. Grafakos, {\it Classical Fourier Analysis}, Third edition, Graduate Texts in Mathematics, {\bf 249}, Springer, New York, 2014.

\bibitem{DPV} {E. Di Nezza, G. Palatucci,  E. Valdinoci}, {\em Hitchhiker's guide to the fractional Sobolev
    spaces}, Bull. Sci. math. {\bf 136} (2012), no. 5, 521-573.

\bibitem{FS} R. Frank, R. Seiringer, {\it Non-linear ground state representations and sharp Hardy inequalities}, Journal of Functional Analysis 255 (2008), 3407--3430.




\bibitem{LPPS} T. Leonori, I. Peral, A. Primo, F. Soria, {\it Basic estimates for solutions of a class
of nonlocal elliptic and parabolic equations.}  Discrete Contin. Dyn. Syst. {\bf 35 }, no. 12,(2015), 6031-6068.

\bibitem{Lio} J. L. Lions, Quelques m\'ethodes de r\'esolution des probl\'emes aux limites nonlin\'eaires Edition Dunod, paris 1969

\bibitem{Pri}{  A. Prignet}, {\em Existence and uniqueness of "entropy" solutions of parabolic problems with $L^1$ data}, Nonlinear Anal. 28 (1997), no. 12, 1943--1954.
\end{thebibliography}
\end{document}